\tikzset{snake it/.style={decorate, decoration=snake}}
\tikzset{join/.code=\tikzset{after node path={%
\ifx\tikzchainprevious\pgfutil@empty\else(\tikzchainprevious)%
edge[every join]#1(\tikzchaincurrent)\fi}}}
\tikzset{>=stealth',every on chain/.append style={join},
         every join/.style={->}}
\tikzset{
    >=stealth',
    punkt/.style={
           rectangle,
           rounded corners,
           draw=black, very thick,
           text width=6.5em,
           minimum height=2em,
           text centered},
    pil/.style={
           ->,
           thick,
           shorten <=2pt,
           shorten >=2pt,}
}
\newcommand{\BB}{\mathbb}
\newcommand{\FR}{\mathfrak}
\newcommand{\bea}{\begin{eqnarray}}
\newcommand{\eea}{\end{eqnarray}}
\newcommand{\nn}{\nonumber}
\newcommand{\Tr}{\operatorname{Tr}}
\newcommand{\tensor}{\vcenter{\hbox{\tiny$\bigotimes$}}}
\newcommand{\bra}{\langle}
\newcommand{\ket}{\rangle}
\newcommand{\re}{\operatorname{Re}}
\newcommand{\To}{\Rightarrow}
\newcommand{\opn}{\operatorname}
\newcommand\hcancel[2][black]{\setbox0=\hbox{$#2$}%
\rlap{\raisebox{.45\ht0}{\textcolor{#1}{\rule{\wd0}{1pt}}}}#2}
\def\ga{\alpha}
\def\gb{\beta}
\def\gc{\gamma}
\def\Gc{\Gamma}
\def\gd{\delta}
\def\Gd{\Delta}
\def\ep{\epsilon}
\def\gs{\sigma}
\def\gl{\lambda}
\def\Go{\Omega}
\DeclareMathAlphabet{\mathpzc}{OT1}{pzc}{m}{it}
\newtheorem{theorem}{Theorem}[section]
\newtheorem{lemma}[theorem]{Lemma}
\newtheorem{proposition}[theorem]{Proposition}
\theoremstyle{definition}
\newtheorem{example}[theorem]{Example}
\newtheorem{remark}[theorem]{Remark}
\newtheorem{notation}[theorem]{Notation}
\newtheorem{corollary}[theorem]{Corollary}
\newtheorem{definition}[theorem]{Definition}
\numberwithin{equation}{section}
\begin{document}
\begin{flushright} \small
UUITP-01/24
 \end{flushright}
\smallskip
\begin{center} \Large
{\bf Pl\"ucker Coordinates and the Rosenfeld Planes}
 \\[12mm] \normalsize
{\bf Jian Qiu${}^{a,b}$} \\[8mm]
 {\small\it
    ${}^a$Department of Physics and Astronomy, Uppsala University,\\
        \vspace{.3cm}
      ${}^b$ Mathematics Institute,  Uppsala University, \\
      \vspace{.3cm}
   Uppsala, Sweden\\  }
\end{center}
\vspace{7mm}
\begin{abstract}
 \noindent
The exceptional compact hermitian symmetric space EIII is the quotient $E_6/Spin(10)\times_{\mathbb{Z}_4}U(1)$.
We introduce the Plücker coordinates which give an embedding of EIII into $\mathbb{C}P^{26}$ as a projective subvariety.
The subvariety is cut out by 27 Plücker relations.
We show that, using Clifford algebra, one can solve this over-determined system of relations, giving local coordinate charts to the space.

Our motivation is to understand EIII as the complex projective octonion plane $(\mathbb{C}\otimes\mathbb{O})P^2$, whose construction is somewhat scattered across the literature.
We will see that the EIII has an atlas whose transition functions have clear octonion interpretations, apart from those covering a sub-variety $X_{\infty}$ of dimension 10. This subvariety is itself a hermitian symmetric space known as DIII, with no apparent octonion interpretation. We give detailed analysis of the geometry in the neighbourhood of $X_{\infty}$.

We further decompose $X={\rm EIII}$ into $F_4$-orbits: $X=Y_0\cup Y_{\infty}$, where $Y_0\sim(\mathbb{O}P^2)_{\mathbb{C}}$ is an open $F_4$-orbit and is the complexification of $\mathbb{O}P^2$, whereas $Y_{\infty}$ has co-dimension 1, thus EIII could be more appropriately denoted as $\overline{(\mathbb{O}P^2)_{\mathbb{C}}}$. This decomposition appears in the classification of equivariant completion of homogeneous algebraic varieties by Ahiezer \cite{Ahiezer}.

\end{abstract}

\eject
\normalsize
\tableofcontents
\section{Introduction}
According to the classification of compact hermitian symmetric spaces (chapter 8 \cite{wolf2011spaces}), there are four infinite families denoted AIII: $SU(p+q)/S(U(p)\times U(q))$, DIII: $SO(2n)/U(n)$, CI: $Sp(n)/U(n)$ and BDI${}_2$: $SO(p+q)/SO(p)\times SO(q)$ for $p$ or $q$ equal to 2. There are also two exceptional ones denoted EIII and EVII that are quotients of $E_6$ and $E_7$ respectively
\bea {\rm EIII}:~~E_6/Spin(10)\times_{\BB{Z}_4}U(1);~~~~ {\rm EVII}:~~E_7/E_6\times U(1)\label{Rosenfeld}.\eea
What is interesting about these spaces is that they all possess a bi-hamiltonian integrable structure.
In \cite{Bonechi:2015iha}, we studied the integrability of the four classical cases A, D, C and BD and gave a uniform treatment of the involutive hamiltonians. However, the exceptional case turned out quite recalcitrant, only a partial solution was given in \cite{Bonechi:2021ezp}. In a coming paper \cite{Depot}, we solve the EIII case.

The current paper serves two purposes. Firstly, in solving EIII, we made crucial use of a very useful coordinate system which we call the \emph{Pl\"ucker coordinates} due to their similarity with the namesake coordinates on the Grassmannians. That such coordinates should exist follows directly from the fact that the space is a projective variety. But to use these coordinates effectively for the calculation, we need to made them very concrete. This is what is done in this paper, in a way, this paper serves as a tech depot for \cite{Depot}.

Secondly, in the process of the calculation, we become interested in the relationship of EIII and EVII to the real octonion projective planes
\bea \BB{O}\BB{P}^2=F_4/Spin(9).\label{Moufang}\eea
More specifically, EIII is expected to be $(\BB{C}\otimes\BB{O})P^2$ while EVII is related to $(\BB{H}\otimes\BB{O})P^2$.

The projective space \eqref{Moufang} has a construction using exceptional Jordan algebra of $3\times3$ octonion matrices.
But similar constructions for $(\BB{C}\otimes\BB{O})P^2$ or $(\BB{H}\otimes\BB{O})P^2$ remain folklore and are dispersed in the literature.
One can complexify/modify the previous octonion matrix in such a way that the entries are precisely the Pl\"ucker coordinates, while the Pl\"ucker relations express certain co-linearity. We refer the reader to sec 4.4 of \cite{Baez:2001dm} for a review on the current status of the subject.

We will review the fact that for $X$=EIII, there are 27 Pl\"ucker coordinates that induce an embedding of $X$ into $\BB{C}P^{26}$.
The image of the embedding is cut out by 27 algebraic equations: the Pl\"ucker relations.
We will proceed to use these coordinates to analyse the geometry of the space. In particular, we give coordinate charts to $X$ and the transition functions in between. Our hope was that the transition functions would allow for a clear octonion interpretation, and thereby justifying $X$ being called the complex octonion projective plane. But we found, quite interestingly, that there is a locus $X_{\infty}\subset X$ of complex dimension 10, which is itself a hermitian symmetric space
\bea X\supset X_{\infty}=DIII=SO(10)/U(5),\nn\eea
i.e. it is the Grassmannian of Lagrangian subspace in $\BB{R}^{10}$.
The coordinate chart in the neighbourhood of $X_{\infty}$ does not seem to permit a natural interpretation as octonions.

Though the exceptional groups have elegant octonion constructions, however, octonions remain unfamiliar to most people. Even for experts, they can be unwieldy for technical computations. In this paper, we rather approach the exceptional groups using Adam's construction, based on spinors and Clifford algebra. That Clifford algebra should enter is not surprising:
the triality of the representation of $Spin(8)$ is what underlies the octonion algebra. In our approach the representations of $Spin(10)$ will be crucial in terms of organising the computation and making the Pl\"ucker coordinates/relations concrete.

To make everything easily accessible for the reader, we have a large appendix, reviewing the essentials of the Clifford algebra, Spin groups and how the exceptional groups arise from these.

The rest of the paper is organised as follows:
We will first review the familiar Pl\"ucker coordinates of Grassmannians and then move to the exceptional case. Here the 27 dimensional representation of $E_6$ plays a star role. In fact, 27 is the dimension of a $3\times3$ hermitian octonion matrix, while the Jordan algebra structure arises from a cubic invariant tensor of the 27 dimensional representation.
We then obtain the coordinate patches for $X$ and the transition function between the patches. We further study the $F_4$-decomposition of $X$, and show that $X$ is the equivariant completion of the complexification of the octonion projective plane.
The punch line of the article is that EIII is the completion of the naive complexification of $\BB{O}\BB{P}^2=F_4/Spin(9)$ in the sense of \cite{Ahiezer} (see prop.\ref{prop_punch_line}). Therefore
\bea {\rm EIII}=E_6/Spin(10)\times_{\BB{Z}_4}U(1)=\overline{(\mathbb{O}P^2)_{\mathbb{C}}}.\label{punch_line}\eea

\vskip .5cm

\noindent {\bf Acknowledgements} Firstly, I would like to thank Seidon Alsaody, who is the resident expert in octonions at the institute and I benefitted from many hours of discussion with him. Secondly, the anonymous referee also brought invaluable suggestions to the paper, both in terms of mathematical precision and possible applications.
Lastly, I need to mention that I was attracted to this subject after attending a course in group theory given by professor Pierre Ramond. I have been thinking about the octonion projective planes and exceptional groups on and off (mostly off though) in the ensuing twenty years or so. Here I would like to express my somewhat belated gratitude to prof. Ramond.

\section{Pl\"ucker Coordinates}
Before going into the Pl\"ucker coordinates for EIII, let us review the more familiar case for the Grassmannians.
\subsection{The $Gr(2,4)$ case}
The Grassmannian $Gr(2,4)$ is the space of two planes in $V=\BB{C}^4$.
We can get to the Pl\"ucker coordinates most concretely as follows: take a matrix $\{g_{ij}\}\in Gl(V)$, take its first two columns and let
\bea z_{ij}=\det
\begin{bmatrix}
  g_{i1} & g_{i2} \\ g_{j1} & g_{j2}
\end{bmatrix}.\label{Plucker_naive}\eea
The Pl\"ucker relation
\bea z_{12}z_{34}+z_{13}z_{42}+z_{14}z_{23}=0\label{Plucker_rel}\eea
follows from an explicit check.

We now recast the Pl\"ucker coordinates in the framework of representations of $Gl(V)$.
We introduce a line bundle, of which the $\{z_{ij}\}$'s are the holomorphic sections. Consider the irreducible representation $R=\wedge^2V$ of $Gl(V)$.
Let $v_0$ be the highest weight vector of $R$. Let $L$ be the complex line $\BB{C}v_0$, name the subgroup that preserves this line as $P$. It is a subgroup that clearly contains the Borel subgroup, and so is called \emph{standard parabolic subgroup}.
We have the associated line bundle ${\cal L}$ \bea
  \begin{tikzpicture}
  \matrix (m) [matrix of math nodes, row sep=2em, column sep=2em]
    {  Gl(V)\times_PL & L \\
       Gl(V)/P &  \\ };
  \path[->,font=\scriptsize]
  (m-1-1) edge node[right] {$\pi$} (m-2-1)
  (m-1-2) edge (m-1-1);
\end{tikzpicture}\nn\eea
The $z_{ij}$'s are the holomorphic sections of the \emph{dual} bundle ${\cal L}^{\vee}$. Recall that for an associated bundle, the sections are in 1-1 correspondence with the $P$-equivariant maps
\bea \gs:\,Gl(V)\to \BB{C},~~~\gs(gp)=p^{-1}\circ \gs(g)\nn\eea
where $g\in Gl(V)$, $p\in P$ and $p\circ $ means the action of $p$ on the fibre $\BB{C}$.
Indeed, given $\gs$, we define a section $s$ as $s([g])=[g,\gs(g)]$, where $[g]\in Gl(V)/P$ and $g$ is any representative of $[g]$. The choice does not matter since
$[gp,\gs(gp)]=[gp,p^{-1}\circ\gs(g)]=[g,\gs(g)]$.

To write down such equivariant maps, we fix a standard basis $\{e_i\}$ for $V=\BB{C}^4$ and the dual basis $\{e_i^{\vee}\}$ for $V^*$. We also fix the standard Borel subgroup, then the highest weight for $R$ is $v_0=e_1\wedge e_2$.
The subgroup preserving $v_0$ \emph{up to a scalar} has a block form
\bea P=
\begin{bmatrix}
  p_{11} & p_{12} \\ 0 & p_{22}
\end{bmatrix}\nn\eea
where $p_{ij}$ are $2\times2$ matrices and $p_{11}$ acts on $v_0$ as $\det p_{11}$ while $p_{12},\,p_{22}$ act trivially.
The orbit of $v_0$, upon projectivising, is precisely $Gl(V)/P$.

Now we can write down the equivariant maps $z_{ij}:\,Gl(V)\to\BB{C}$
\bea z_{ij}(g)=\bra e^{\vee}_i\wedge e^{\vee}_j|g|e_1\wedge e_2\ket,\nn\eea
where we used the physics notation of bra-ket: $|e_1\wedge e_2\ket$ is a vector in $\wedge^2V$, and $\bra e^{\vee}_i\wedge e^{\vee}_j|$ is a vector in $\wedge^2V^*$, and $\bra\cdots|\cdots\ket$ is the natural pairing between $\wedge^2V^*,\wedge^2V$.
If we unwind this definition of $z_{ij}$, we get exactly \eqref{Plucker_naive}.
It is also easily verifiable that $z_{ij}(gp)=\det(p_{11})z_{ij}(g)$. This $P$-equivariance means that $z_{ij}$ are fit to be the sections of ${\cal L}^{\vee}$, not ${\cal L}$.

Note that all of $z_{ij}$ cannot vanish simultaneously, and so they give a map
\bea Gr(2,V)\to \BB{P}^5,~~~[g]\to [z_{12},z_{13},z_{14},z_{23},z_{24},z_{34}],\nn\eea
i.e. the $z_{ij}$'s are now the homogeneous coordinates of $\BB{P}^5$. This map is also an embedding and realises the Grassmannian as a projective variety.

\smallskip

We now also phrase \eqref{Plucker_rel} as a representation theoretic problem, so we can generalise it later.
Note that a product $z_{ij}z_{kl}$ can be written as
\bea z_{ij}z_{kl}=\bra e^{\vee}_i\wedge e^{\vee}_j|g|e_1\wedge e_2\ket\bra e^{\vee}_k\wedge e^{\vee}_l|g|e_1\wedge e_2\ket=
\bra e^{\vee}_i\wedge e^{\vee}_j\otimes e^{\vee}_k\wedge e^{\vee}_l|g|e_1\wedge e_2\otimes e_1\wedge e_2\ket.\nn\eea
The factor $|e_1\wedge e_2\otimes e_1\wedge e_2\ket$ lives in $\wedge^2V\otimes^s\wedge^2V$, while
$\bra e^{\vee}_i\wedge e^{\vee}_j\otimes e^{\vee}_k\wedge e^{\vee}_l|$ lives in $\wedge^2V^*\otimes\wedge^2V^*$ (of which only the symmetric part matters). Let us decompose the latter tensor product. In terms of the Young tableaux $\wedge^2V^*\simeq {\tiny\yng(1,1)}$, and
\bea {\tiny\yng(1,1)}\otimes^s{\tiny\yng(1,1)}={\tiny\yng(1,1,1,1)}\oplus {\tiny\yng(2,2)}.\label{YD_decom}\eea
The first summand is of dim 1, on which $Gl(V)$ acts by the determinant. The projection to this summand is
\bea e^{\vee}_i\wedge e^{\vee}_j\otimes^s e^{\vee}_k\wedge e^{\vee}_l\to e^{\vee}_i\wedge e^{\vee}_j\wedge e^{\vee}_k\wedge e^{\vee}_l
=\ep_{ijkl}e^{\vee}_1\wedge e^{\vee}_2\wedge e^{\vee}_3\wedge e^{\vee}_4.\nn\eea
But observe that the same summand in the decomposition of $e_1\wedge e_2\otimes^s e_1\wedge e_2$ will be zero. This shows that
\bea z_{i[j}z_{kl]}=
\bra e_i^{\vee}\wedge e^{\vee}_{[j}\otimes e^{\vee}_k\wedge e_{l]}^{\vee}|g|e_1\wedge e_2\otimes e_1\wedge e_2\ket=0.\nn\eea
Writing out $z_{i[j}z_{kl]}$ explicitly, we recover the Pl\"ucker relation \eqref{Plucker_rel}.

\smallskip

For $Gr(2,4)$, \eqref{Plucker_rel} happens to be the only relation, thus $Gr(2,4)$ is embedded in $\BB{P}^5$ as a quadric surface. If we run the same construction for, say $Gr(2,5)$, we will still take the highest weight $e_1\wedge e_2\in \wedge^2V$, where $V=\BB{C}^5$.
When considering the decomposition of $e_1\wedge e_2\otimes^s e_1\wedge e_2\in \wedge^2V\otimes^s\wedge^2V$, its component in the summand $\wedge^4V$ is still zero, leading to the Pl\"ucker relation $z_{i[j}z_{kl]}=0$, except now there will be 5 such relations since $\dim \wedge^4V=5$.
This shows that the Pl\"ucker relations are redundant, indeed, they satisfy
\bea z_{ij}z_{kl}z_{pq}\ep^{jklpq}=0.\nn\eea
But these five redundancies are not independent either, and one goes on to find relations of ... of relations.
This procedure is clearly untenable for more complicated groups, and especially when the tensor decomposition is not as straightforward as \eqref{YD_decom}.

In practice, when faced with a subvariety in $\BB{P}^9$ cut out by $z_{i[j}z_{kl]}=0$, one can restrict to open sets of $\BB{P}^9$, where one can solve the Pl\"ucker relations explicitly in terms of 6 independent variables. These variables would then be the local complex coordinates of $Gr(2,5)$.
This will also be the approach for EIII.

\subsection{Pl\"ucker coordinates on $E_6/Spin(10)\times_{\BB{Z}_4}U(1)$}
We will construct the Pl\"ucker coordinates for $E_6/Spin(10)\times_{\BB{Z}_4}U(1)$.
To avoid repetition, in coming sections we set
\bea G:=E_6,~~~H:=Spin(10)\times_{\BB{Z}_4}U(1),\nn\eea
and $B\subset G_{\BB{C}}$ be the standard Borel subgroup (see 21.3 \cite{humphreys1975linear}).
Let us first recall the equivalence
\bea G/H\simeq G_{\BB{C}}/P,\label{equal_quot}\eea
where $P$ is the standard parabolic subgroup containing (and uniquely determined by) $H$ and $B$. The equality of the two quotients follows from the Iwasawa decomposition. We will spend a few words on this point. Let $G_{\BB{C}}$ be a complex Lie group and $B$ its Borel subgroup, then the Iwasawa decomposition gives
\bea G_{\BB{C}}=GB~{\rm or}~BG\label{Iwasawa}\eea
which is a generalisation of the Gram-Schmidt procedure, see e.g. theorem 26.3 \cite{bump2013lie}. This means that any $\gc\in G_{\BB{C}}$ can be written as $\gc=gb$ or $b'g'$ with $g,g'\in G$ and $b,b'\in B$. As a special case, if $H\subset G$ is a Poisson-Lie subgroup, and $g\in H$, then it follows that $g'$ is in $H$ too, see \cite{lu1990}. From this, the subset $HB$ is a subgroup containing $H,B$ and hence the standard parabolic $P$. The equality of the quotient \eqref{equal_quot} becomes now clear. For later use we remark that if $P=HB$, then $P\cap G=H$.

We will also need a convenient way to characterise the standard parabolic subgroups (those that contain $B$).  The Bruhat decomposition (28.3 \cite{humphreys1975linear}) says
\bea G_{\BB{C}}=\bigsqcup_{\gs\in W}B\gs B\label{Bruhat}\eea
where $\gs$ runs over the Weyl group $W$. The Weyl group is generated by the set of involutions $S=\{s_i\}$, with each $s_i$ corresponding to a simple root of $G$. Let $I\subset S$, and $W_I$ be generated by $I$, and $P_I=BW_IB$ is a standard parabolic subgroup uniquely characterised by $I$ (see 29.2, 29.3 \cite{humphreys1975linear}).

\smallskip

Back to the Pl\"ucker coordinates.
To mirror the construction of the last section, we need to find a representation of $G_{\BB{C}}$, and a dim 1 subspace that is preserved by $P$.
We can take ${\bf 27}^*$ and $\Phi_0$ its highest weight.
For a brief review of these representations, see appendix \ref{sec_E6}. We set $V=\BB{C}^{10}$, $\Gd^{\pm}$ the right/left handed spin representation of $Spin(10)$, then ${\bf 27}$ and ${\bf 27}^*$ decompose as representations of $H$ as
\bea {\bf 27}\simeq V_2\oplus \Gd^+_{-1}\oplus \BB{C}_{-4},\nn\\
{\bf 27}^*\simeq V^*_{-2}\oplus \Gd^-_{+1}\oplus \BB{C}_{+4}\nn\eea
where the subscript $2,-1$ etc are the weights under $U(1)$.
The last $\BB{C}$ factor in ${\bf 27}^*$ is the highest weight. We show in lem.\ref{lem_stab_grp_e6} that the stability group of this subspace is $P$: the Borel part naturally preserves the highest weight, the $Spin(10)$ acts trivially while $U(1)$ acts with weight 4.

Consider the associated line bundle ${\cal L}$
\bea
  \begin{tikzpicture}
  \matrix (m) [matrix of math nodes, row sep=2em, column sep=2em]
    {  G_{\BB{C}}\times_PL_4 & L_4 \\
       X=G_{\BB{C}}/P &  \\ };
  \path[->,font=\scriptsize]
  (m-1-1) edge node[right] {$\pi$} (m-2-1)
  (m-1-2) edge (m-1-1);
 \end{tikzpicture}\nn\eea
 where the subscript $L_4$ is to denote the weight under the action of $\BB{C}^*\subset P$.
 In exactly the same manner we can write down the holomorphic sections of the dual bundle ${\cal L}^{\vee}$, presented as $P$-equivariant maps
 \bea z_{\Psi}(g)=\bra \Psi|g|\Phi_0\ket\label{Plucker_E6}\eea
 where $\Phi_0$ is the highest weight of ${\bf 27}^*$ and $\Psi\in {\bf 27}$. This means we have 27 sections labelled by ${\bf 27}$. Concretely, we let $g$ act on $\Phi_0$, the entries of the resulting vector are the $z_{\Psi}$. 

Since the vector $g\Phi_0\neq 0$, that is, the 27 sections do not vanish simultaneously, we get a map
\bea X\to \BB{P}^{26}.\label{embedding}\eea
\begin{theorem}\label{thm_EIII}
  The homogeneous space $X$=EIII is embedded in $\BB{C}P^{26}$ via the Pl\"ucker coordinates.
\end{theorem}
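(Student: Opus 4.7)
The plan is to verify, in sequence, that the Pl\"ucker map is well-defined as a map from $X$ to $\BB{P}^{26}$, that it is injective, and that it is an immersion, which then upgrades to a closed embedding by compactness. Each step parallels the Grassmannian calculation of Section 2.1, with $\wedge^2 V$ replaced by the representation ${\bf 27}^*$ of $G_{\BB{C}}$ and the highest-weight vector $e_1\wedge e_2$ replaced by $\Phi_0$.

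For well-definedness, I would note that irreducibility of ${\bf 27}^*$ forces $g\Phi_0\ne 0$ for every $g\in G_{\BB{C}}$, so the 27 sections $z_{\Psi}(g)=\bra\Psi|g|\Phi_0\ket$ do not vanish simultaneously, and their ratios define a point of $\BB{P}^{26}$. To descend the resulting map $G_{\BB{C}}\to\BB{P}^{26}$ to the quotient $X=G_{\BB{C}}/P$, I would invoke Lemma \ref{lem_stab_grp_e6}, already announced in the text, which identifies $P$ as the stabiliser up to scalar of the line $L_4=\BB{C}\Phi_0$: any $p\in P$ acts as $p\Phi_0=\chi(p)\Phi_0$ for a character $\chi:P\to\BB{C}^*$, whence $[(gp)\Phi_0]=[g\Phi_0]$ in $\BB{P}^{26}$.

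Injectivity and immersivity then both follow from the same lemma. For injectivity, $[g_1\Phi_0]=[g_2\Phi_0]$ means $g_2^{-1}g_1\Phi_0\in L_4$, so $g_2^{-1}g_1\in P$ and hence $[g_1]=[g_2]$ in $X$. For the immersion, the differential at the basepoint $[e]$ sends $v\in\FR{g}_{\BB{C}}/\FR{p}$ to the class of $v\Phi_0$ in ${\bf 27}^*/\BB{C}\Phi_0$; its kernel is the set of $v\in\FR{g}_{\BB{C}}$ with $v\Phi_0\in\BB{C}\Phi_0$, which is precisely $\FR{p}$ by the same lemma, so the differential is injective, and global immersivity follows by $G_{\BB{C}}$-equivariance. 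The image is then closed either by compactness of $X={\rm EIII}$ together with the Hausdorff property of $\BB{P}^{26}$, or algebraically because the $G_{\BB{C}}$-orbit through a highest-weight line is the unique closed orbit in $\BB{P}({\bf 27}^*)$.

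The hard part is really Lemma \ref{lem_stab_grp_e6}: one must rule out that any subgroup strictly larger than $P$ preserves $L_4$. I would do this by combining the weight decomposition ${\bf 27}^*\simeq V^*_{-2}\oplus\Gd^-_{+1}\oplus\BB{C}_{+4}$ with the Bruhat-type characterisation of standard parabolics recalled around \eqref{Bruhat}, pinning $P$ down as the standard parabolic associated to the simple root whose fundamental weight matches the $U(1)$-weight of $\Phi_0$. Granted that lemma, the three steps above are formal and Theorem \ref{thm_EIII} is essentially an exercise in highest-weight theory.
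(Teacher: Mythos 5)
Your proposal is correct and follows essentially the same route as the paper: immersivity via the tangent-space/differential computation at the base point plus homogeneity, injectivity via the identification of the stabiliser of the line $[\Phi_0]$ with $P$ (Lemma \ref{lem_stab_grp_e6} together with the Bruhat characterisation of standard parabolics through the simple roots annihilated by the dominant weight), and closedness from compactness/properness. The only cosmetic difference is that you phrase the immersion step as injectivity of the differential with kernel $\FR{p}$, where the paper phrases it as the tangent space being spanned by the action of $\Gd_-$; these are the same computation.
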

\begin{proof}(Sketch)
  The proof is quite standard, see e.g. Th\'eor\`eme 3 in \cite{Serre_BWB}. We give a quick overview here. The tangent space at $[\Phi_0]$ (the line represented by $\Phi_0$) is spanned by the Lie algebra action of $\Gd_-\subset\FR{e}_6$ (see sec.\ref{sec_E6} for the description of $\FR{e}_6$), which coincides with $T_eG_{\BB{C}}/P$, showing that the map is an immersion at $e$, hence an immersion everywhere since $X$ is homogeneous. The properness is automatic and so it remains to show the map is 1-1. This means checking that the stability group of the line $[\Phi_0]$ is $H$, for which we give a direct proof in lem.\ref{lem_stab_grp_e6}. But slightly more abstractly, the proof proceeds as follows. Working in $G_{\BB{C}}$, the stability group of $[\Phi_0]$ certainly contains $B$ and so is a standard parabolic group. This means $P$ has the form $P=P_I$ for some $I$, as we have just reviewed. That is to say, $P$ is of the form $\cup_{\gs\in W_I}B\gs B$, with $W_I=\bra s_i,\,i\in I\ket$. Let $\gl$ be the dominant weight of the representation ${\bf 27}^*$. That $s_i$ preserves $\Phi_0$ means $\gl(\ga_i)=0$, where $\ga_i$ is the simple root corresponding to $s_i$. These $\ga_i$'s must therefore be the simple roots of $\FR{so}(10)$ (and only these). This uniquely characterises $P$ as the parabolic group $HB$, since the latter contains the same set of $s_i$'s. Taking intersection $P\cap G=H$.
\end{proof}

But we would like to describe this embedding more explicitly, in particular, we want to describe the image as cut out by a number of equations--the Pl\"ucker relations.
\begin{theorem}\label{thm_cut_by_Plucker}
  The image of $X\hookrightarrow\BB{C}P^{26}$ is cut out by 27 quadratic Pl\"ucker equations \eqref{Plucker_E6}.

  Further, we can cover $\BB{C}P^{26}$ with 27 standard coordinate charts. On each of the opens, we can solve the Pl\"ucker relations in terms of 16 local complex coordinates.
\end{theorem}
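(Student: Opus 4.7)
The plan is to mirror the Grassmannian argument of Section 2.1. For the first assertion, the starting point is the $E_6$-module decomposition
\bea \mathrm{Sym}^2({\bf 27}^*) = {\bf 351} \oplus {\bf 27}, \nn\eea
in which the projection $\pi:\mathrm{Sym}^2({\bf 27}^*)\to{\bf 27}$ is realized by double-contraction against the $E_6$-invariant cubic tensor $C\in\mathrm{Sym}^3({\bf 27})$ (the Jordan cubic on the $\bf 27$). Since $\Phi_0$ is the highest weight vector of ${\bf 27}^*$, the tensor $\Phi_0\otimes\Phi_0$ has weight $2\lambda$, which appears with multiplicity one in $\mathrm{Sym}^2({\bf 27}^*)$ and is the highest weight of the ${\bf 351}$-summand. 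Therefore $\pi(\Phi_0\otimes\Phi_0)=0$, and by $G_{\BB{C}}$-equivariance $\pi(z\otimes z)=0$ at every point $[z]\in X$, yielding 27 quadratic Pl\"ucker relations of the form $C^{ABC}z_{B}z_{C}=0$.

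The converse --- that these relations cut out $X$ and nothing more --- I would establish using the fact that the ${\bf 27}$ is a minuscule representation of $E_6$, for which the projective variety of highest-weight lines is known to be defined scheme-theoretically by its Pl\"ucker ideal (a classical result going back to Kostant). A self-contained verification proceeds as follows: let $Z\subset\BB{C}P^{26}$ be the zero locus of $C^{ABC}z_Bz_C=0$; this is closed, $G_{\BB{C}}$-invariant and contains $X$. On the chart constructed in the next step, one reads off that $\dim Z=16=\dim X$; since $X$ is an irreducible component of $Z$ and $Z$ is irreducible (for instance because the $G_{\BB{C}}$-orbit of $[\Phi_0]$ is the unique closed orbit inside $Z$), one concludes $Z=X$.

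For part (2), the 27 standard affine opens of $\BB{C}P^{26}$ are labelled by the weights of ${\bf 27}^*$. Since the Weyl group of $E_6$ acts transitively on these weights (the ${\bf 27}$ being minuscule) and is realized inside $G$, Weyl elements permute the standard charts, so it suffices to solve the relations on one representative chart. Take $U_0$ to be the chart at $[\Phi_0]$, normalize $z_{\Phi_0}=1$, and use the $H$-decomposition ${\bf 27}^*\simeq V^*_{-2}\oplus\Delta^{-}_{+1}\oplus\BB{C}_{+4}$ to split the remaining 26 coordinates into 16 spinor coordinates $\psi_{\alpha}\in\Delta^{-}$ and 10 vector coordinates $z_{\mu}\in V^*$. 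The proposal is to treat the 16 spinor coordinates as free parameters on $U_0\cap X$ and to solve for the vector coordinates by the Clifford bilinear
\bea z_{\mu}=\tfrac{1}{2}\,\bar\psi\,\Gamma_{\mu}\,\psi, \nn\eea
which is the $Spin(10)$-equivariant map $\mathrm{Sym}^2(\Delta^{-})\to V^*$ supplied by Clifford multiplication. Under the $H$-branching, the 27 Pl\"ucker relations restricted to $U_0$ split into blocks of dimensions $(1,16,10)$, and upon substituting the ansatz they collapse to well-known Fierz identities for $Spin(10)$ chiral spinors such as $\Gamma_{\mu}\psi\,(\bar\psi\Gamma^{\mu}\psi)=0$, which are part of the Clifford material compiled in the appendix.

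The main obstacle I foresee is the verification of these Fierz identities in sufficient detail to certify that the spinor parameterization of $U_0\cap X$ automatically satisfies every one of the 27 relations, together with organising the transport to the other 26 charts. The Weyl-group transitivity reduces the bookkeeping in principle, but the weights of ${\bf 27}^*$ come in three $H$-orbits (the singlet, the vector and the spinor orbits), and only the singlet orbit lies near the smooth "octonionic" part of $X$; the charts whose defining weight sits in the vector orbit cover neighbourhoods of the DIII locus $X_\infty$ announced in the introduction, where the spinor ansatz above degenerates and a different resolution of the Pl\"ucker relations must be used. Matching these different local descriptions into a coherent global atlas is the point where a clean statement becomes a concrete calculation.
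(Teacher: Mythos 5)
The first half of your argument coincides with the paper's: the relations come from the vanishing of the ${\bf 27}$-component of $\Phi_0\otimes\Phi_0$ in ${\rm Sym}^2({\bf 27}^*)$ (the paper phrases this via the $U(1)$-weight $8$ of $\Phi_0\otimes\Phi_0$ rather than via the Cartan square, but these are the same observation), and the projection is implemented by the cubic invariant $d$, exactly as in prop.\ref{prop_d_tensor}. Your singlet-chart ansatz $z_\mu\propto(\psi e^\mu\psi)$ with the $16$ spinor components free is precisely the paper's solution on $\{s\neq0\}$, and the Fierz identity you need to certify it is \eqref{Jacobi_E6}, which the appendix does establish. So up to this point you are reproducing the intended proof.

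The gap is in the converse, i.e.\ in showing that the quadrics cut out nothing besides $X$. Your ``self-contained verification'' reads off $\dim Z=16$ \emph{only on the chart $U_0$}; that says nothing about irreducible components of $Z$ contained in the hyperplane $\{z_{\Phi_0}=0\}$, so neither the dimension bound nor the irreducibility of $Z$ is established by what you have written. (The closed-orbit remark is salvageable --- every component of $Z$ is $G_{\BB{C}}$-stable and hence contains the unique closed orbit $X$ --- but you still need $\dim\le 16$ for \emph{every} component, which forces you to solve the relations on all $27$ charts.) You actually hold the key to closing this: since ${\bf 27}$ is minuscule, representatives of $W(E_6)$ in $G_{\BB{C}}$ permute the $27$ weight hyperplanes and preserve $Z$, so $Z\cap U_\mu\cong Z\cap U_0\cong\BB{C}^{16}$ for every weight $\mu$, and connectedness follows because each chart meets the connected $X$. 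But your final paragraph abandons this and instead declares that the vector-orbit charts near $X_\infty$ need ``a different resolution'' and remain an open obstacle --- which is internally inconsistent with the Weyl-transitivity claim two paragraphs earlier, and leaves the proof incomplete as written. For comparison, the paper does not use the Weyl shortcut: it solves the relations explicitly on the vector charts via an $8+2$ split of the Clifford algebra (sec.\ref{sec_StPR}) and on the spinor charts via pure-spinor technology (sec.\ref{sec_StPR_2}, which is where the DIII locus $X_\infty$ appears), and then concludes $X=Y$ by a connectedness-plus-equal-dimension argument. Your route, made consistent, is legitimately shorter for the purposes of this theorem (or one can simply invoke Kostant's theorem on quadratic generation for minuscule closed orbits), but it forfeits the explicit $Spin(10)$-adapted coordinates near $X_\infty$ that the rest of the paper relies on.
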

\begin{proof}
  The proof goes along the same line as the $Gr(2,4)$ case. Consider the product
  \bea z_{\Psi_1}(g)z_{\Psi_2}(g)=\bra \Psi_1\otimes \Psi_2|g|\Phi_0\otimes \Phi_0\ket.\nn\eea
  The tensor product decomposes as (see table 48 of \cite{SLANSKY19811})
  \bea {\bf 27}\otimes{\bf 27}\simeq{\bf 27}^*_s\oplus {\bf 351}_a\oplus {\bf 351}_s'\label{tensor_plucker}\eea
  where the subscript $a,s$ denotes if the two copies of ${\bf 27}$ are anti-symmetric or symmetric in the two factors.

  The tensor product $\Phi_0\otimes \Phi_0\in{\bf 27}^*\otimes^s{\bf 27}^*$ clearly has no component in ${\bf 351}^*_a$ due to symmetry. Nor does it have any component in ${\bf 27}$: the $U(1)$ weight of $\Phi_0\otimes \Phi_0$ is $8$ and there is no such vector in ${\bf 27}$.
  Based on this vanishing,
  we project $\Psi_1\otimes \Psi_2$ to ${\bf 27}^*$ and get
  \bea \bra (\Psi_1\otimes\Psi_2)_{27^*}|g|\Phi_0\otimes \Phi_0\ket=0\nn\eea
  giving us 27 relations on $z_{\Psi}(g)$. These are the Pl\"ucker relations.

  In prop.\ref{prop_d_tensor}, we give details of how the projection $\Psi_1\otimes \Psi_2 \to (\Psi_1\otimes\Psi_2)_{27^*}$ works.
  The main point is that ${\bf 27}$ has a totally symmetric invariant tensor $d(\textrm{-},\textrm{-},\textrm{-}):~{\bf 27}^{\otimes3}\to \BB{C}$. We regard $d$ as a map of representations
  \bea d:\; {\bf 27}\otimes^s {\bf 27}\to {\bf 27}^*.\nn\eea
  We denote also with $d$ the cubic invariant for ${\bf 27}^*$, thus
  the Pl\"ucker relation is now
  \bea d(z(g),z(g),-)=0.\label{Plucker_E6}\eea
  These relations are massively redundant, since by a simple counting, $\dim_{\BB{C}}X=16$, and there should only be $10=27-1-16$ independent relations.
  We have yet to show that these are the only relations, i.e. whether \eqref{Plucker_E6} cuts out the image of \eqref{embedding} in $\BB{C}P^{26}$.
  It is conceivable that there can be a purely representation theoretic proof of this statement, in the same way that \eqref{Plucker_E6} is constructed from representation theory. But our limited knowledge in the representations of $E_6$ forces us to take a more pedestrian route, which would meander around the rest of this section.

  Tentatively, we denote with $Y$ the subvariety in $\BB{C}P^{26}$ cut out by the Pl\"ucker relations
  \bea Y=\{z\in\BB{C}P^{26}|d(z,z,\textrm{-})=0\}.\label{tentative}\eea
  The Pl\"ucker coordinates $[g]\to z(g)$ gives a map $X\to Y$.
  We will first prove that $Y$ is a connected smooth variety of dimension 16, by
  solving \eqref{Plucker_E6} and finding the local coordinate charts, this will be done in the next section. Clearly $X$ is embedded in $Y$ as a submanifold. That $X,Y$ have the same dimension and $Y$ being connected means that the embedding is actually surjective, by ex.2 sec.4 ch.1 of \cite{guillemin2010differential} (we regret having to mix algebraic and differential topological arguments). Thus we have proved $X=Y$.
\end{proof}

\subsection{Solving the Pl\"ucker Relations}\label{sec_StPR}
We denote vectors in ${\bf 27}$ as
\bea \Psi=
\begin{bmatrix}
  v \\ \psi \\ s
\end{bmatrix},~~~~\Psi_0=\begin{bmatrix}
  0 \\ 0 \\ 1
\end{bmatrix},~~~~v\in V_2,~~\psi\in\Gd_{-1}^+,~~s\in\BB{C}_{-4}.\label{Psi_0_main}\eea
For the dual ${\bf 27}^*$, we write
\bea \Phi=
\begin{bmatrix}
  u \\ \phi \\ t
\end{bmatrix},~~~~\Phi_0=\begin{bmatrix}
  0 \\ 0 \\ 1
\end{bmatrix},~~~~u\in V_{-2},~~\phi\in\Gd^-_1,~~t\in\BB{C}_4.\label{Phi_0_main}\eea
The projection of $\Psi_1\otimes\Psi_2$ to ${\bf 27}^*$ is
\bea \opn{proj}_{27^*}(\Psi_1\otimes \Psi_2)=d(\Psi_1,\Psi_2,\textrm{-})=
\begin{bmatrix}
  v_1s_2+v_2s_1-\frac{1}{\sqrt2}(\psi_1e^i\psi_2)e^i \\ -\frac{1}{\sqrt2}(v_1\psi_2+v_2\psi_1) \\ v_1\cdotp v_2 \\
\end{bmatrix}.\label{should_be_zero}\eea
We refer the reader to prop.\ref{prop_d_tensor} for more details.
\begin{notation}
The Pl\"ucker coordinates can be thought of as a $\BB{C}$-linear map from ${\bf 27}$ to $\Gc({\cal L}^{\vee})$, i.e. given $\Psi\in{\bf 27}$, its corresponding section is $z_{\Psi}(g)=\bra\Psi|g|\Phi_0\ket$, where as usual, we identify $P$-equivariant functions as sections.

But in what follows, we work with the conjugate of these sections instead. This is due to a historical blunder that the author made in early stage of the work, which means all the technical computations were done in the opposite convention. We apologise for the inconvenience.

This means the sections will be written as $\bra\Phi|g|\Psi_0\ket$, with $\Psi_0$ being the \emph{lowest} weight in ${\bf 27}$. As ${\bf 27}$ decomposes as $Spin(10)$ representations as $V\oplus \Gd^+\oplus\BB{C}$, we denote with
\bea v=\opn{proj}_V(g|\Psi_0\ket),~~~\psi=\opn{proj}_{\Gd^+}(g|\Psi_0\ket),~~~s=\opn{proj}_{\BB{C}}(g|\Psi_0\ket)\nn\eea
and treat these as the Pl\"ucker coordinates instead (i.e. these are actually anti-holomorphic).
\end{notation}
The 27 Pl\"ucker relations are read off from \eqref{should_be_zero} $d(z,z,\textrm{-})=0$:
\bea 2vs-\frac{1}{\sqrt2}(\psi e^i\psi)e^i&=&0,~~~\times 10\label{equations_1}\\
v\psi&=&0,~~~\times 16 \label{equations_2}\\
v\cdotp v&=&0,~~~\times 1,\label{equations_3}\eea
see sec.\ref{sec_CA} for the Clifford algebra and spinor notations.

Before we get the coordinate charts, we want to convince the reader in one open stratum that the equation system is self-consistent and lends itself to elegant solutions.
Let us work in the open set $\{s\neq 0\}$. Then we can solve \eqref{equations_1}
\bea v=\frac{1}{2\sqrt2s}(\psi e^i\psi)e^i.\nn\eea
This $v$ will automatically satisfy \eqref{equations_2}, \eqref{equations_3}. Indeed
\bea v\cdotp v=\frac{1}{8s^2}(\psi e^i\psi)(\psi e^i\psi)\stackrel{\eqref{Jacobi_E6}}{=}0,\nn\eea
solving \eqref{equations_3}, while \eqref{equations_2} holds for the same reason.
In the rest of this section, we will solve the equation system in other open sets.

We cover $\BB{C}P^{26}$ with the 27 standard open sets. On the open $s\neq 0$, we are done. On the open set where
\bea t_+=\sqrt2(v_9+iv_{10})\neq 0\nn\eea
we split the 10D Clifford algebra in 8D plus 2D. We adopt the representation of Clifford algebra as in \eqref{gamma_explicit}, which is friendly to our 8+2 split:
\bea &&e^i=
\begin{bmatrix}
  {\tt e}^a & 0 \\ 0 & {\tt e}^a
\end{bmatrix},~~~a=1,\cdots,8\nn\\
&&e^9+ie^{10}=
\begin{bmatrix}
  0 & 2\gc_8 \\ 0 & 0
\end{bmatrix},~~~e^9-ie^{10}=
\begin{bmatrix}
  0 & 0 \\ 2\gc_8 & 0
\end{bmatrix},\nn\\
&&C_{10}=\begin{bmatrix}
  0 & C_8 \\ C_8 & 0
\end{bmatrix},~~~\gc_{10}=\begin{bmatrix}
  \gc_8 & 0 \\ 0 & -\gc_8
\end{bmatrix},\nn\eea
where ${\tt e}^a$ are the representation of $e^a$ as generators of the dim 8 Clifford algebra, and $C_8$, $\gc_8$ are the dim 8 charge conjugation and chirality operator. The reader may verify these using the explicit representations \eqref{charge_conj} \eqref{chirality}.
We split likewise $\Gd^{\pm}$ of 10D as 8D spinors
\bea \Gd^+_{10}=
\begin{bmatrix}
  \Gd^+_8 \\ \Gd^-_8
\end{bmatrix},~~~\Gd^-_{10}=
\begin{bmatrix}
  \Gd^-_8 \\ \Gd^+_8
\end{bmatrix},~~~\psi\in\Gd^+_{10},~~\psi=
\begin{bmatrix}
  \xi \\ \eta
\end{bmatrix}.\nn\eea
We rewrite the Pl\"ucker relations as
\bea &&2s{\tt u}-\sqrt2(\xi {\tt e}^a\eta){\tt e}^a=0,\nn\\
&&st_++(\eta\eta)=0,\nn\\
&&st_--(\xi\xi)=0,\nn\\
&&
\begin{bmatrix}
  {\tt u} & \frac{1}{\sqrt2}t_-\gc_8 \\ \frac{1}{\sqrt2}t_+\gc_8 & {\tt u}
\end{bmatrix}
\begin{bmatrix}
  \xi \\ \eta
\end{bmatrix}=0,\nn\\
&&{\tt u}\cdotp{\tt u}+\frac12t_+t_-=0,\label{6_eq}\eea
where the typewriter font ${\tt u}$ means the first eight components of $v$.

Since $t_+\neq 0$, we solve equation 2, 5 and the lower one of 4
\bea \xi=-\sqrt2t_+^{-1}{\tt u}\eta,~~~s=-t_+^{-1}(\eta\eta),~~~t_-=-2t_+^{-1}{\tt u}\cdotp{\tt u}.\nn\eea
We plug the solution to the other equations, e.g. for equation 1
\bea 2\big(-t_+^{-1}(\eta\eta)\big){\tt u}+2t_+^{-1}(\eta{\tt u}e^a\eta)e^a=-2t_+^{-1}(\eta\eta){\tt u}+t_+^{-1}(\eta\{{\tt u},e^a\}\eta)e^a=0\checkmark.\nn\eea
The remaining equations can also be checked. Thus in this open set, we have seventeen projective coordinates $t_+,\eta,{\tt u}$.

The treatment on the open set $\{t_-\neq0\}$ is entirely the same, and we get
\bea s=t_-^{-1}(\xi\xi),~~~t_+=-2t_-^{-1}{\tt u}\cdotp{\tt u},~~~\eta=\sqrt2t_-^{-1}{\tt u}\xi\nn\eea
and we have $t_-,\xi,{\tt u}$ as coordinates.

When $t_+=t_-=0$ i.e. $v_9=v_{10}=0$, we need only apply a $Spin(10)$ transformation. Concretely, the Pl\"ucker relations are Spin(10) equivariant, and so we can rotate another component of $v$ to be its 10th component. To see how this works, referring to def.\ref{def_Spin}, we set
\bea \Go=\frac{1}{\sqrt2}(1+e^ae^{10}),~~~\Go e^a\Go^{-1}=-e^{10},~~\Go e^{10}\Go^{-1}=e^a\nn\eea
i.e. $\Go$ is a Spin lift of the $SO(10)$ that rotates the $e^a$ and $e^{10}$ plane.
We apply $\Go$ to all three equations \eqref{equations_1}, \eqref{equations_2} and \eqref{equations_3}. In particular
\bea \Go\eqref{equations_2}=(\Go v\Go^{-1})\Go\psi=0.\nn\eea
We denote $\hat v=\Go v\Go^{-1}$, and $\hat\psi=\Go\psi$, where
\bea \hat\psi=
\begin{bmatrix}
  \hat\xi \\ \hat\eta
\end{bmatrix}=\frac{1}{\sqrt2}
\begin{bmatrix}
  \xi+i{\tt e}^a\eta \\ \eta+i{\tt e}^a\xi
\end{bmatrix}.\nn\eea
Thus if the corresponding $\hat t_+$ or $\hat t_-$ is nonzero, we can solve the Pl\"ucker relations as above, merely adding a hat on everything.

\subsection{Solving the Pl\"ucker Relations: at $X_{\infty}$}\label{sec_StPR_2}
The coordinate charts so far can cover all the loci where $s$ or $v\neq 0$. The remaining case to tackle is the locus
\bea X_{\infty}=\{s=v=0\}\cap X.\nn\eea
Let us first analyse what this locus is. Assuming $s=0$, then we need to solve
\bea (\psi e^i\psi)=0,~~i=1,\cdots,10.\nn\eea
As preparation, we give a name to these spinors.
\begin{definition}\label{def_pure_spinor}
  In dim 10, a spinor $\psi\in \Gd^+$ is called \emph{pure} if
  $\psi\otimes\psi$ has zero projection to $V^*$ in the decomposition
  \bea\Gd^+\otimes\Gd^+\simeq V^*\oplus \wedge^3V^*\oplus \wedge^5_{-i}V^*.\nn\eea
\end{definition}
Concretely $\psi$ being pure means $(\psi e^i\psi)=0$, for all $i$. Less obvious is the relation $(\psi e^{ijk}\psi)=0$, which is due to symmetry $(\psi e^{ijk}\psi)=(\psi e^{kji}\psi)=-(\psi e^{ijk}\psi)$.
\begin{lemma}\label{lem_pure_spin_J}
  Assume $\psi$ is pure and normalised: $\bra \psi,\psi\ket=1$, then it defines a complex structure
  \bea v\to Jv=\frac{i}{2}\bra \psi,[e^i,v]\psi\ket e^i.\label{cplx_struc}\eea

  Further, the following elements of $\wedge^5V$ are (anti-)holomorphic
  \bea \Go=\frac{\sqrt2}{8\cdotp5!}\bra\psi,e^{i_1\cdots i_5}C\psi^*\ket e^{i_1}\wedge\cdots\wedge e^{i_5},
  ~~\bar \Go=\frac{\sqrt2}{8\cdotp5!}(\psi,e^{i_1\cdots i_5}\psi)e^{i_1}\wedge\cdots\wedge e^{i_5},~~\frac{1}{5!}\Go^{i_1\cdots i_5}\bar\Go^{i_1\cdots i_5}=1.\nn\eea
  We also have the equality
  \bea \frac{1}{4!}\bra\psi e^{ijkl}\psi\ket e^i\wedge\cdots\wedge e^l=-\frac12J\wedge J,~~~~J=\frac{i}{2}\bra\psi,[e^i,e^j]\psi\ket e^i\wedge e^j,\nn\eea
  where we took the liberty of identifying $J$ as in $\wedge^2V$.
\end{lemma}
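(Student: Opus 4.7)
The plan is to exploit $Spin(10)$-equivariance together with a Fierz expansion. Every map $\psi\mapsto J$, $\psi\mapsto \Omega$, $\psi\mapsto \bar\Omega$, and $\psi\mapsto \langle\psi,e^{ijkl}\psi\rangle$ is $Spin(10)$-covariant in $\psi$, and by Definition~\ref{def_pure_spinor} combined with $\langle\psi,\psi\rangle=1$, the normalized pure spinors form a single $Spin(10)$-orbit (with stabilizer $SU(5)$; indeed, this is exactly why $X_{\infty}=DIII=SO(10)/U(5)$ will appear below). It therefore suffices to verify each identity for one convenient representative. A natural choice is the fermionic vacuum $\psi_0=|0\rangle$ associated to a Lagrangian splitting $V_{\BB{C}}=W\oplus \bar W$, with oscillators $a_{\alpha},a^{\dagger}_{\alpha}$, $\alpha=1,\ldots,5$, satisfying $\{a_{\alpha},a^{\dagger}_{\beta}\}=\delta_{\alpha\beta}$ and $a_{\alpha}|0\rangle = 0$.

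For $J^2=-1$, I would first give a representation-free argument: expand $|\psi\rangle\langle\psi|$ in the Clifford basis via Fierz. Purity kills the $1$-form coefficient (this is the content of Definition~\ref{def_pure_spinor}), so only the $3$- and self-dual $5$-form pieces survive. Pairing against $[e^j,e^k]\cdot|\psi\rangle\langle\psi|\cdot[e^k,e^l]$ and applying Clifford anticommutators, plus the pure-spinor identity \eqref{Jacobi_E6}, yields $J^j{}_iJ^i{}_l=-\delta^j_l$. As a sanity check, in the oscillator model the computation is immediate: $J$ acts as $+i$ on $W$ and $-i$ on $\bar W$.

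The (anti-)holomorphicity of $\Omega,\bar\Omega$ then follows from the standard characterization of a pure spinor as the (up to scale unique) element annihilated by the Lagrangian $\bar W\subset V_{\BB{C}}$; by construction $W$ is the $(+i)$-eigenspace of $J$, and $\Omega\propto w_1\wedge\cdots\wedge w_5\in\wedge^{5}W^{*}$ is of type $(5,0)$, so it is holomorphic with respect to $J$. The normalization $\tfrac{1}{5!}\Omega^{i_1\cdots i_5}\bar\Omega^{i_1\cdots i_5}=1$ reduces in the oscillator realization to evaluating $\langle 0|a_1\cdots a_5\,a^{\dagger}_5\cdots a^{\dagger}_1|0\rangle=1$, and fixes the prefactor $\sqrt 2/(8\cdot 5!)$ as compatible with the given conventions on $C$, $\gamma_{10}$ and $\langle\psi,\psi\rangle=1$.

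For the $4$-form identity $\tfrac{1}{4!}\langle\psi\,e^{ijkl}\psi\rangle\,e^i\wedge\cdots\wedge e^l=-\tfrac{1}{2}J\wedge J$, I would again apply Fierz: by type counting, a pure $\psi$ has no $(4,0)$ or $(0,4)$ component in the $4$-form bilinear, so only the $3$-form piece of $|\psi\rangle\langle\psi|$ contributes, and after antisymmetrizing the indices the answer reassembles into $-\tfrac{1}{2}J\wedge J$. In the oscillator model $J=i\sum_{\alpha}w^{\alpha}\wedge\bar w_{\alpha}$, so both sides become manifestly $(2,2)$-forms and the coefficients match directly. The main obstacle throughout is bookkeeping: the Clifford sign conventions, the behaviour of $C$ under transpose, and the $(-i)$-self-duality of the $5$-form component of $\Gd^{+}\otimes \Gd^{+}$; once these are pinned down as in Section~\ref{sec_CA}, each of the four identities is a short oscillator computation.
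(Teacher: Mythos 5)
Your proof is correct, and it is essentially the route the paper itself takes: the ``adapted spin representation'' of rmk.\ref{rmk_adapted_spin_rep} is exactly your oscillator/Fock model, and the verification for $\psi=1=|0\rangle$ (wedging $=$ creation, contraction $=$ annihilation) is the same computation. What you add, and what the paper leaves implicit, is the reduction step: each of the four assignments $\psi\mapsto J,\Omega,\bar\Omega,\langle\psi\,e^{ijkl}\psi\rangle$ is $Spin(10)$-covariant (for $\Omega$ one needs the easy check that $\psi\mapsto C\psi^*$ intertwines the group action, which follows from \eqref{t_T} and unitarity), and the normalised pure spinors form a single $Spin(10)$-orbit with stabiliser a copy of $SU(5)$ (the central $U(1)$ of $U(5)$ sweeps out the phase), so checking the Fock vacuum suffices. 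The paper instead declares the rigorous proof to be a representation-free Fierz computation (``a bit tedious'', omitted) and offers the adapted representation only as intuition; your equivariance argument upgrades that intuition to an actual proof, at the cost of invoking the classical transitivity of $Spin(10)$ on pure spinor lines. That transitivity is independent of the lemma (it is Cartan's identification of pure spinors with maximal isotropic subspaces of $V_{\BB{C}}$), but note the paper only records it afterwards, as $X_\infty\simeq SO(10)/U(5)$, so you should cite it as an external classical fact rather than as something already established. Your sketch of the representation-free Fierz route for $J^2=-1$ is too thin to stand on its own, but it is not needed once the orbit reduction is in place.
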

Do pay attention that in the formula above we used both pairings $\bra\cdots\ket$ and $(\cdots)$, see def.\ref{def_spin_action} for the difference.
\begin{remark}\label{rmk_adapted_spin_rep}
  All of the relations given here are proved by applying the Fierz relations. The process is a bit tedious. So instead, we indicate how one can understand these relations intuitively by introducing an adapted spin representation (though these relations all hold independent of any particular choice of representations).

  Assume we are given a complex structure $J$ in 10 dimension, set
\bea \Gd^+=\Go^{{\rm ev},0},~~~\Gd^-=\Go^{{\rm od},0},\nn\eea
where $\Go^{p,q}$ are the $(p,q)$ form on $\BB{C}^5\simeq \BB{R}^{10}$. The Clifford action is realised as: pick complex basis $e^a,e^{\bar a}$ for $\BB{C}^5$. Then given $\psi\in\Gd^{\pm}$, we have
\bea e^a\psi=\sqrt2 e^a\wedge\psi,~~~e^{\bar a}\psi=\sqrt2\iota_a\psi,~~~e^{a_1\cdots a_p}\sim 2^{p/2}dx^{a_1}\wedge \cdots\wedge dx^{a_p}\label{Clifford_J}\eea
i.e. the Clifford action is either wedging a form $\psi$ with $e^a$ or contracting with $e^a$.

Now we set $\psi=1$, the 0-form, then \eqref{cplx_struc} gives
\bea &&Je^a=\frac{i}{2}\bra\psi[e^i,e^a]\psi\ket e^i=i([\iota_b,e^a])_0 e^b=ie^a,\nn\\
&&Je^{\bar a}=i([e^b,\iota_a])_0 e^{\bar b}=-ie^{\bar a},\nn\eea
where $(\cdots)_0$ means extracting the 0-form component. This shows that with $\psi=1$ (which is pure), we get the complex structure we started with.

Now the statement that $\Go$ be holomorphic should be clear, since any $u$ with $Ju=-iu$ would annihilate $\psi_0$. In the same way, one can understand the relation involving the 4-form.
Finally if $\psi=1$, then $e^aC\psi^*=0$ for all $e^a$, so $C\psi^*$ is proportional to the holomorphic 5-form $e^1\wedge\cdots\wedge e^5$, in fact, from the way we defined $\Go$, $C\psi^*=\Go$.
\end{remark}
We show next that $X_{\infty}$ is another nice homogeneous space
\begin{lemma}
 The locus $X_{\infty}$ is parametrised by a pure spinor $\psi$ and it is the homogeneous space
\bea X_{\infty}\simeq SO(10)/U(5).\nn\eea
\end{lemma}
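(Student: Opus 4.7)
The plan is to identify $X_\infty$ with the projectivisation of the variety of pure spinors in $\Delta^+$, and then to identify the latter with $SO(10)/U(5)$ via the complex-structure map of Lemma~\ref{lem_pure_spin_J}. First I would substitute $s=v=0$ into the Pl\"ucker relations \eqref{equations_1}--\eqref{equations_3}: relations \eqref{equations_2} and \eqref{equations_3} become trivially satisfied, while \eqref{equations_1} collapses to $(\psi e^i\psi)=0$ for all $i$, which by Definition~\ref{def_pure_spinor} is precisely the purity condition. Combining this with Theorem~\ref{thm_cut_by_Plucker}, which asserts that $X$ is cut out in $\BB{C}P^{26}$ by the Pl\"ucker relations, and noting that a point of $\BB{C}P^{26}$ requires $\psi\neq 0$, yields
\[
X_\infty \;\simeq\; \BB{P}\bigl(\{\psi\in\Delta^+\setminus\{0\}\mid \psi\text{ pure}\}\bigr).
\]

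Next I would construct a $Spin(10)$-equivariant bijection $\Phi$ between this projective variety and the space $\mathcal{J}(\BB{R}^{10})$ of orthogonal complex structures on $\BB{R}^{10}$. Lemma~\ref{lem_pure_spin_J} already provides one direction: a normalised pure spinor $\psi$ determines an orthogonal complex structure $J(\psi)$ via $v\mapsto \tfrac{i}{2}\bra\psi,[e^i,v]\psi\ket e^i$; because this depends sesquilinearly on $\psi$, rescaling and renormalising leaves $J$ unchanged, so the assignment descends to a well-defined map $\Phi:X_\infty\to\mathcal{J}(\BB{R}^{10})$. Equivariance is immediate from the formula, since Clifford conjugation of $\psi$ induces the corresponding $SO(10)$-rotation of $J$ through the double cover $Spin(10)\to SO(10)$. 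Surjectivity uses the adapted representation of Remark~\ref{rmk_adapted_spin_rep}: given $J$, the identification $\Delta^+\simeq\Omega^{\mathrm{ev},0}$ singles out the vacuum $\psi_0=1\in\Omega^{0,0}$ as a pure spinor mapping to $J$ under $\Phi$. For injectivity I would appeal to the Cartan--Chevalley characterisation that a nonzero pure spinor is determined up to scalar by its Clifford annihilator $\{u\in V_\BB{C}\mid u\cdot\psi=0\}$, which is a maximal isotropic subspace of $V_\BB{C}$ coinciding with the $(-i)$-eigenspace of $J$; hence $\Phi(\psi)=\Phi(\psi')$ forces $\psi,\psi'$ to share the same annihilating isotropic $5$-plane and thus to be proportional. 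Combining this equivariant bijection with the classical identification $\mathcal{J}(\BB{R}^{10})\simeq SO(10)/U(5)$, whose stabiliser is the unitary group of a reference $(\BB{R}^{10},J_0)$, finishes the proof.

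The main obstacle is the injectivity of $\Phi$, which rests on the classical fact that pure spinors in dimension $10$ form the highest-weight $Spin(10)$-orbit in $\BB{P}(\Delta^+)$ and are in bijection with maximal isotropic subspaces of a fixed chirality. I would either cite this result directly or verify it by a short computation in the adapted representation: any spinor annihilated by every holomorphic basis vector $e^a$ under Clifford action must lie in $\Omega^{0,0}$ and hence is a scalar multiple of the vacuum, matching the transitivity statement that $Spin(10)$ acts transitively on projectivised pure spinors with stabiliser $U(5)$.
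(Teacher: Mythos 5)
Your proposal is correct and follows essentially the same route as the paper: reduce the Plücker relations at $s=v=0$ to the purity condition, then identify projectivised pure spinors with orthogonal complex structures on $\BB{R}^{10}$ via Lemma \ref{lem_pure_spin_J} and the adapted representation of Remark \ref{rmk_adapted_spin_rep}. Your write-up is in fact more complete than the paper's two-sentence argument, since you explicitly supply the injectivity step (a pure spinor is determined up to scale by its annihilating maximal isotropic subspace, i.e.\ by the $(-i)$-eigenspace of $J$), which the paper leaves implicit.
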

\begin{proof}
The locus $X_{\infty}$ is parametrised by a pure spinor $\psi$, which defines a complex structure as in lem.\ref{lem_pure_spin_J}. The space of complex structures in $\BB{R}^{10}$ is precisely described by the homogeneous space stated. The scaling of $\psi$ does not matter as $\psi$ are themselves the homogeneous coordinates of $\BB{C}P^{26}$.

Conversely, given a complex structure $J$, we can define a pure spinor in the way we have described in the last remark.
\end{proof}

Now we will find open sets that can cover $X_{\infty}$ and solve the Pl\"ucker coordinates there.
Set $\psi_0$ to be pure, with $\bra\psi_0\psi_0\ket=1$. Set further
\bea &&f=\bra \psi_0\psi\ket\in \BB{C},~x=(\psi_0e^i\psi)e^i\in V,\nn\\
&&\tilde K=\frac12\bra \psi_0e^{ij}\psi\ket e^i\wedge e^j\in\wedge^2V,\nn\\
&&K=\frac12(\bra \psi_0e^{ij}\psi\ket-f\bra \psi_0e^{ij}\psi_0\ket) e^i\wedge e^j\in\wedge^2V,\label{set_further}\eea
where we have freely identified $V$ with $V^*$ with the complexified inner-product on $V=\BB{C}^{10}$.
Here the point is that if $\psi$ were to be a pure spinor, then there must be some pure spinor $\psi_0$, such that $f=\bra\psi_0\psi\ket\neq 0$. In such case, we can express $\psi$ in terms of $\psi_0$ and $f,x,K$. We can also divide by $f$ to go from homogeneous to inhomogeneous coordinates.
\begin{lemma}\label{lem_technical}
We can express $\psi$ using $f,x,K,\psi_0$
\bea \psi=\psi_0f-\frac{1}{8}K\psi_0+\frac12xC\psi_0^*.\label{psi_psi_0}\eea
If $J$ is the complex structure determined by $\psi_0$ (Eq.\ref{cplx_struc}), then $x$ is of type (0,1) with respect to $J$ i.e. $Jx=-ix$, while $K$ is of type (2,0).

We record also the relation
\bea (\psi e^i\psi)e^i=2fx-Kx-\frac{1}{8\sqrt2}\bar\Go^{ipqrs}K^{pq}K^{rs}e^i.\nn\eea
\end{lemma}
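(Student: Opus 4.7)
The plan is to verify the lemma in the adapted spin representation of Remark~\ref{rmk_adapted_spin_rep}, where $\Delta^+$ becomes $\Omega^{\mathrm{ev},0}$ on $\mathbb{C}^5$. Since both sides of the claimed identity are Spin-equivariant and any pure $\psi_0$ is Spin-conjugate to $1\in\Omega^{0,0}$, it suffices to do the calculation with $\psi_0=1$ in this single representation.

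With $\psi_0=1$ the induced complex structure $J$ is the standard one on $\mathbb{C}^5$, and an arbitrary $\psi\in\Delta^+$ decomposes uniquely as $\psi=\omega_0+\omega_2+\omega_4$ with $\omega_p\in\Omega^{p,0}$. I would then compute $f,x,\tilde K,K$ piece by piece using \eqref{Clifford_J}. One finds that $f=\bra\psi_0\psi\ket$ extracts $\omega_0$; the quantity $(\psi_0 e^i\psi)e^i$ only survives on the $\omega_4$-piece, because projection onto $\psi_0$ selects 0-form content, which forces $e^i\psi$ to arise from $e^a\wedge\omega_4\in\Omega^{5,0}$ and then pair with $\bar\Omega$ to yield a vector in $V^{0,1}$ --- this explains why $Jx=-ix$ automatically. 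Similarly $\tilde K$ captures the $(2,0)$-piece from $\omega_2$ together with a $(1,1)$-trace from $\omega_0$, and the subtraction in the definition of $K$ removes the latter (since $\bra\psi_0 e^{ij}\psi_0\ket\propto J^{ij}$), leaving $K$ of pure type $(2,0)$.

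Next I would match these against the proposed decomposition $\psi=\psi_0 f-\tfrac18 K\psi_0+\tfrac12 xC\psi_0^*$. The Clifford action of $K=K_{ab}e^{ab}$ on $1$ gives $2K_{ab}e^a\wedge e^b$, reproducing $\omega_2$ up to a coefficient; and $C\psi_0^*=\Omega$ is the top holomorphic 5-form by lem.\ref{lem_pure_spin_J}, so $xC\psi_0^*\in\Omega^{4,0}$ reproduces $\omega_4$ after dualising $x\in V^{0,1}$ against $\bar\Omega$. The numerical factors $-\tfrac18$ and $\tfrac12$ are then pinned down by the $\sqrt 2$'s in \eqref{Clifford_J} and the normalisation of $\Omega$ fixed in lem.\ref{lem_pure_spin_J}.

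For the final identity $(\psi e^i\psi)e^i=2fx-Kx-\tfrac{1}{8\sqrt 2}\bar\Omega^{ipqrs}K^{pq}K^{rs}e^i$, I would substitute \eqref{psi_psi_0} into $(\psi e^i\psi)e^i$ and expand: the $\omega_0\omega_4$ cross terms produce $2fx$, the $\omega_2\omega_4$ cross terms produce $-Kx$, the $\omega_0\omega_2$ terms drop out by the symmetry of the bilinear $(\cdot\,e^i\,\cdot)$, and the pure $\omega_2\omega_2$ contribution is a $K\wedge K$ contracted against the anti-holomorphic top form $\bar\Omega$, which is exactly the last summand. The main obstacle is purely bookkeeping: two spinor pairings $\bra\cdot\ket$ and $(\cdot)$ are in simultaneous use (def.~\ref{def_spin_action}), and the interplay of the $\sqrt 2$-factors in \eqref{Clifford_J} with the antisymmetrisation conventions for $e^{i_1\cdots i_p}$ makes the $\omega_2\omega_2$ term — where two $(2,0)$-forms must be repackaged through $\bar\Omega$ — the most likely place to lose a factor of two.
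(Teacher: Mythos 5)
Your route is genuinely different from the paper's. The paper proves \eqref{psi_psi_0} by hitting $\psi=\psi\bra\psi_0\psi_0\ket$ with the Fierz identity \eqref{Fierz_10_key}, and then establishes the type conditions and the formula for $(\psi e^i\psi)e^i$ by further Fierz/Clifford manipulations, deliberately avoiding any choice of representation (see the footnote to the paper's proof). You instead invoke equivariance to reduce to $\psi_0=1$ in the adapted representation of rmk.\ref{rmk_adapted_spin_rep} and read everything off from the form-degree decomposition $\psi=\omega_0+\omega_2+\omega_4$. This is a legitimate alternative --- the paper itself flags it as the ``tautological'' route --- and your structural identifications ($f\leftrightarrow\omega_0$, $K\leftrightarrow\omega_2$ after removing the $(1,1)$-trace, $x\leftrightarrow\omega_4$ via contraction with the holomorphic volume form, hence $Jx=-ix$) are all correct. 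Two caveats on the reduction itself: you should say explicitly that all ingredients ($C$, the two pairings, $\Go$, $\bar\Go$) are canonically normalised so that the statement is representation-independent, and note that a normalised pure spinor is $Spin(10)$-conjugate to $1$ only up to a phase --- one then checks that each term of \eqref{psi_psi_0} and of the final identity is invariant under $\psi_0\to e^{i\gt}\psi_0$, which it is.

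The genuine gap is that the proposal stops exactly where the lemma's content begins. The coefficients $-\tfrac18$, $\tfrac12$, the relative weight of $2fx$ against $-Kx$, and the $-\tfrac{1}{8\sqrt2}\bar\Go^{ipqrs}K^{pq}K^{rs}$ term are what get fed into the Pl\"ucker relations in the next proposition, where they must cancel exactly; deferring all of them to unexecuted ``bookkeeping'' and conceding that the $\go_2\go_2$ term is ``the most likely place to lose a factor of two'' means the lemma is not actually proved. One of your qualitative mechanisms is also wrong: the $\go_0\go_2$ cross term in $(\psi e^i\psi)e^i$ does not drop out ``by the symmetry of the bilinear'' --- in dimension 10 the pairing $(\phi e^i\psi)$ is \emph{symmetric} in $\phi,\psi$ (the $\wedge^1$ summand is in the symmetric part of $\Gd^+\otimes\Gd^+$), so the two cross terms add rather than cancel. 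The term $f(\psi_0 e^i K\psi_0)$ vanishes instead because, with $\psi_0=1$, the bilinear pairing against $\psi_0$ extracts only the degree-5 component, and $e^i(K\psi_0)$ has degree $1$ or $3$; this is the purity argument the paper uses. To turn your sketch into a proof you must carry out the $\go_4\go_4$, $\go_0\go_4$, $\go_2\go_4$ and $\go_2\go_2$ contractions explicitly with the normalisations of \eqref{Clifford_J} and lem.\ref{lem_pure_spin_J}.
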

\begin{proof}
  For the first statement, we consider $\psi=\psi\bra \psi_0\psi_0\ket $, and apply \eqref{Fierz_10_key} (set $\psi_2=\psi$, $\psi_1=\psi_0$ and $\eta=C\psi_0^*$)
  \bea \psi=\psi\bra \psi_0\psi_0\ket=-\frac14\psi_0\bra\psi_0\psi\ket-\frac{1}{8}e^{ij}\psi_0\bra\psi_0e^{ij}\psi\ket+\frac12e^iC\psi_0^*(\psi_0e^i\psi)\nn\eea
  We name the tensors $f,x,\tilde K$ as in \eqref{set_further}, thus
  \bea \psi=-\frac{1}{4}\psi_0f-\frac{1}{8}\tilde K^{ij}e^{ij}\psi_0+\frac{1}{2}x^ie^iC\psi_0^*.\nn\eea

  Now we prove the properties of $f,x,\tilde K$.
  We first show that $u\in V$ is (0,1) (i.e. $Ju=-iu$) \emph{iff} $u\psi_0=0$.\footnote{This would be tautology using the adapted spinor representation as in rmk.\ref{rmk_adapted_spin_rep}. But we choose not to go this route so as to demonstrate that none of the relations depend on any concrete choice of spin representations.}
  Assume $u$ is (0,1), then
  \bea -iu\psi_0=Ju\psi_0=\frac{i}{2}\bra \psi_0[e^i,u]\psi_0\ket e^i\psi_0=i\bra\psi_0(-\hcancel{ue^i}+u^i)\psi_0\ket e^i\psi_0=iu\psi_0,\nn\eea
  where the crossed out term vanishes due to \eqref{Jacobi_E6} (set $\psi_1=\psi_3=\psi_0$ and $\psi_2=uC\psi_0^*$). From this we get $u\psi_0=0$. For the converse, assume $u\psi_0=0$
  \bea Ju=\frac{i}{2}\bra \psi_0[e^i,u]\psi_0\ket e^i=
  i\bra \psi_0(e^iu-u^i)\psi_0\ket e^i=-iu.\nn\eea

  Now we show that $x$ is $(0,1)$, i.e. we need to check if $x\psi_0=0$:
  \bea x\psi_0=(\psi_0 e^i\psi)e^i\psi_0\stackrel{\eqref{Jacobi_E6}}{=}-\frac12(\psi_0e^i\psi_0)e^i\psi=0\checkmark.\nn\eea

  For the tensor $\tilde K$, let $v$ be such that $Jv=iv$, then $vC\psi^*_0=0$ for a similar reason as above. Then
  \bea \tilde K^{ij}v^j=\bra\psi_0 e^{ij}\psi\ket e^i v^j=\bra\psi_0(-\hcancel{ve^i}+v^i)\psi\ket e^i=vf,\nn\eea
  this shows that the combination
  \bea K=\tilde K+ifJ\nn\eea
  is of type (2,0), so it makes sense to use $K$ instead of $\tilde K$. Now we need evaluate $J^{ij}e^{ij}\psi_0$ (again effortlessly computed using the adapted spinor representation).
  We compute
  \bea J^{ij}e^{ij}\psi_0&=&i\bra\psi_0e^{ij}\psi_0\ket e^{ij}\psi_0\stackrel{\eqref{Fierz_10_key}}{\to}\nn\\
  &=&-2i\bra\psi_0\psi_0\ket \psi_0-8i\psi_0\bra\psi_0\psi_0\ket=-10i\psi_0.\nn\eea
  With this, we get to
  \bea \psi=\psi_0f-\frac{1}{8}K^{ij}e^{ij}\psi_0+\frac{1}{2}x^ie^iC\psi_0^*,\nn\eea
  which is \eqref{psi_psi_0}.

  Now we can compute $(\psi e^i\psi)$ in terms of $f,K,x$. We will use \eqref{psi_psi_0} to replace $\psi$
  \bea (\psi e^i\psi)e^i&=&f^2\hcancel{(\psi_0e^i\psi_0)}e^i-\frac{1}{64}(\psi_0Ke^iK\psi_0)e^i+\frac{1}{4}\hcancel{(C\psi^*_0,xe^ixC\psi_0^*)}e^i\nn\\
  &&-\frac{f}{4}\hcancel{(\psi_0e^iK\psi_0)}e^i+f(\psi_0e^ixC\psi_0^*)e^i+\frac{1}{8}(\psi_0Ke^ixC\psi_0^*)e^i,\nn\eea
  where the three terms vanish due to purity, see def.\ref{def_pure_spinor} and the comment immediately thereafter. For the remaining terms
  \bea (\psi e^i\psi)e^i=-\frac{1}{64}(\psi_0Ke^iK\psi_0)e^i+f(\psi_0e^ixC\psi_0^*)e^i+\frac{1}{8}(\psi_0Ke^ixC\psi_0^*)e^i,\nn\eea
  we define the (0,5)-form as in lem.\ref{lem_pure_spin_J}
  \bea \bar\Go^{i_1\cdots i_5}=\frac{1}{4\sqrt2}(\psi_0e^{i_1\cdots i_5}\psi_0),\nn\eea
  and we get
  \bea (\psi e^i\psi)e^i
  =-\frac{1}{8\sqrt2}\bar\Go^{pqirs}K^{pq}e^iK^{rs}+f(\psi_0e^ixC\psi_0^*)e^i-\frac{1}{8}\bra \psi_0xe^i K\psi_0\ket e^i.\nn\eea
  For the middle term, we apply the Clifford relation to move $x$ to the left and kill $\psi_0$ as we have shown above, and we get $2fx$.
  For the last term, we move $x$ to the right to kill $\psi_0$
  \bea \bra \psi_0xe^iK\psi_0\ket&=&2x^i\hcancel{\bra \psi_0K\psi_0\ket}-\bra \psi_0e^ixK\psi_0\ket
  =-\bra \psi_0e^i(-4Kx)^je^j\psi_0\ket\nn\\
  &=&-4i(JKx)^i+4(Kx)^i=8(Kx)^i,\nn\eea
  where we crossed a term since $K$ is (2,0), and we also used the relation $[x,K]=-4(Kx)^ie^i$. Altogether
  \bea (\psi e^i\psi)e^i=-\frac{1}{8\sqrt2}\bar\Go^{ipqrs}K^{pq}K^{rs}e^i+2fx-Kx.\nn\eea
  We have finished the proof.
\end{proof}
The point of this technical lemma is that when solving the Pl\"ucker relations, we need to solve some spinor equations, but by this lemma, we express $\psi$ as some tensors and we can turn the Pl\"ucker relation into tensor equations, which we can solve.
\begin{proposition}
Fix a normalised pure spinor $\psi_0$, in the open set $\{f=\bra\psi_0\psi\ket\neq 0\}$, the Pl\"ucker relation \eqref{equations_1}-\eqref{equations_3} can be solved in terms of a (2,0) tensor $K$, a (0,1) vector $\bar u$ and two scalars $s,f$ (17 in total):
 \bea
 \Psi&=&[v;\psi;s],\nn\\
 v&=&-\frac{1}{2}f^{-1}K\bar u+\bar u,\nn\\
 \psi&=&\psi_0f-\frac{1}{8}K\psi_0+\frac{1}{\sqrt2}f^{-1}\big(s\bar u+\frac{1}{32}\bar\Go^{ipqrs}K^{pq}K^{rs}e^i\big)C\psi_0^*.\nn\eea
\end{proposition}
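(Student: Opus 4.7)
The plan is direct verification: parametrise $\psi$ via Lemma \ref{lem_technical}, reduce \eqref{equations_1} to a linear system for the remaining unknowns, and then check that \eqref{equations_2}--\eqref{equations_3} hold automatically. By Lemma \ref{lem_technical}, in the chart $\{f\neq 0\}$ we may write $\psi = \psi_0 f - \tfrac{1}{8}K\psi_0 + \tfrac{1}{2}xC\psi_0^*$ with $K$ of type $(2,0)$ and $x$ of type $(0,1)$ relative to the complex structure $J$ determined by $\psi_0$. The unknowns are then $(s,f,K,x,v)$, and we will parametrise $(x,v)$ in terms of a new $(0,1)$ vector $\bar u$.

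Next I would introduce $\bar u := v^{(0,1)}$ as the independent parameter and substitute the companion identity of Lemma \ref{lem_technical} for $(\psi e^i\psi)e^i$ into \eqref{equations_1} to obtain
\[
2s\bigl(v^{(1,0)}+\bar u\bigr) \;=\; \tfrac{1}{\sqrt 2}\Bigl(2fx - Kx - \tfrac{1}{8\sqrt 2}\bar\Omega^{ipqrs}K^{pq}K^{rs}e^i\Bigr).
\]
Both $x$ and the vector $y := \bar\Omega^{ipqrs}K^{pq}K^{rs}e^i$ are of type $(0,1)$, whereas the vector part of the Clifford product $Kx$ is of type $(1,0)$. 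Splitting the equation by type gives
\[
x \;=\; \tfrac{\sqrt 2}{f}\bigl(s\bar u + \tfrac{1}{32}y\bigr), \qquad v^{(1,0)} \;=\; -\tfrac{1}{2\sqrt 2\,s}\,Kx,
\]
and substituting the first into the second simplifies to $v^{(1,0)} = -\tfrac{1}{2f}K\bar u$, \emph{provided} the residual term $\tfrac{1}{64sf}\,Ky$ vanishes. Back-substituting the value of $x$ into the parametrisation of $\psi$ then produces the claimed formula.

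Equation \eqref{equations_3} follows for free: writing $v = \bar u + v^{(1,0)}$ with both $V^{(1,0)}$ and $V^{(0,1)}$ isotropic for the bilinear form, one has $v\cdot v = 2\bar u\cdot v^{(1,0)} = -f^{-1}\bar u_i K^{ij}\bar u_j = 0$ by antisymmetry of $K$. For \eqref{equations_2} I would expand $v\psi$ as a Clifford product and use $\bar u\psi_0 = 0$ together with $v^{(1,0)}C\psi_0^* = 0$ (both from the proof of Lemma \ref{lem_technical}) to kill the diagonal contributions; the surviving off-diagonal cross terms should then cancel via the Clifford relations and the type constraints on $K$, $\bar u$ and $x$.

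The principal obstacle is the Schouten-type vanishing $Ky = 0$ used to collapse $v^{(1,0)}$ to its final form. It ultimately reflects $K\wedge K\wedge K = 0$ in complex dimension $5$: contracting this identity against four of the five slots of $\bar\Omega$ produces precisely the vector part of $Ky$. The Clifford verification of \eqref{equations_2} is long but routine, and the adapted spin representation of Remark \ref{rmk_adapted_spin_rep} is a convenient bookkeeping device in which both $y$ and the cross terms become manifest combinations of $(p,q)$-forms whose cancellation is visible without extensive Fierz manipulations.
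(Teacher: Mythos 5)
Your overall strategy is the same as the paper's: parametrise $\psi$ via Lemma \ref{lem_technical}, split the relations by Hodge type, solve the linear pieces, and rely on the Schouten-type vanishing $\epsilon_{a_1\cdots a_5}K^{a_0a_1}K^{a_2a_3}K^{a_4a_5}=0$ to absorb the residual terms. (Your justification of that vanishing via $K\wedge K\wedge K=0$ in complex dimension $5$ is a fine alternative to the paper's rank argument.) However, there is one concrete step that fails: you extract $v^{(1,0)}$ from the $(1,0)$-part of \eqref{equations_1}, writing $v^{(1,0)}=-\tfrac{1}{2\sqrt2\,s}Kx$, which divides by $s$. On the chart $\{f\neq0\}$ the scalar $s$ is \emph{not} invertible --- indeed $X_\infty=\{s=v=0\}$ lies inside $\{s=0\}$, and covering $X_\infty$ is the entire purpose of this chart. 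When $s=0$ the $(1,0)$-part of \eqref{equations_1} degenerates to $Kx=0$ and determines $v^{(1,0)}$ not at all, so your plan of ``solve everything from \eqref{equations_1}, then verify \eqref{equations_2}--\eqref{equations_3}'' cannot establish surjectivity of the parametrisation precisely where it matters. The paper instead obtains $u=v^{(1,0)}$ from the $1$-form component of \eqref{equations_2}, namely $uf+\tfrac12K\bar u=0$, which divides only by $f$; the $(1,0)$-part of \eqref{equations_1} then becomes one of the identities to be \emph{checked} (and it holds by the same Schouten vanishing).

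A secondary, lesser issue: the verification of \eqref{equations_2} is not ``routine'' in the sense of following from $\bar u\psi_0=0$ and type bookkeeping alone. Its $3$-form component requires the genuine identity $\bar\Omega^{ipqrs}K^{pq}K^{rs}\iota_i\Omega=4\,{*}{*}(K\wedge K)$ relating the double contraction against the antiholomorphic volume form to $K\wedge K$ itself; without carrying this out you have only verified the $1$- and $5$-form components. Once you reroute the determination of $v^{(1,0)}$ through \eqref{equations_2} and supply the $3$-form computation, your argument coincides with the paper's.
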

\begin{proof}
Using lem.\ref{lem_technical}, we write the Pl\"ucker relations in terms of $x,f,K$
\bea I:&&2vs-\frac{1}{\sqrt2}(-\frac{1}{8\sqrt2}\bar\Go^{ipqrs}K^{pq}K^{rs}e^i+2fx-Kx)=0,\nn\\
II:&&v(\psi_0f-\frac{1}{8}K\psi_0+\frac12xC\psi_0^*)=0,\label{used_middle}\\
III:&&v^2=0.\nn\eea
We split $v$ into its (1,0) part $u$ and its (0,1) part $\bar u$ (but $u,\bar u$ are not conjugate of each other). The first equation above splits into
\bea I_1:&&2us+\frac{1}{\sqrt2}Kx=0,\nn\\
 I_2:&&2\bar us+\frac{1}{16}\bar\Go^{ipqrs}K^{pq}K^{rs}e^i-\sqrt2fx=0.\nn\eea
 To analyse \eqref{used_middle}, one can again use the Fierz relation to express it as tensors as we did for $\psi$ itself. But for a bit of variation, we recall that when represented as forms $\psi_0=1$ and $C\psi_0^*=\Go$, so \eqref{used_middle} decomposes according to form degrees
\bea II_1:&&u\psi_0f-\frac{1}{8}\bar uK\psi_0=0,~\To~uf+\frac{1}{2}K\bar u=0,\nn\\
II_3:&&-\frac{1}{8}uK\psi_0+\frac12\bar uxC\psi_0^*=0,~\To~-\frac{\sqrt2}{2}u\wedge K+\iota_{\bar u}\iota_x\Go=0,\nn\\
II_5:&&\frac12uxC\psi_0^*=0~\To~u\cdotp x=0,\nn\eea
where $II_{1,3,5}$ are respectively 1- 3- and 5-forms and factors of $\sqrt 2$ come from \eqref{Clifford_J}.
We solve $x$ from $I_2$ and $u$ from $II_1$
\bea &&x=\sqrt2f^{-1}\bar us+\frac{1}{16\sqrt2}f^{-1}\bar\Go^{ipqrs}K^{pq}K^{rs}e^i\nn\\
&&u=-\frac{1}{2}f^{-1}K\bar u.\nn\eea
These actually solve all the other equations. To start, the equation $III:\;v^2=u\cdotp\bar u=0$ because $K$ is anti-symmetric.
Checking $II_5:\;u\cdotp x=0$ is more interesting
\bea u\cdotp x=\sqrt2f^{-1}\hcancel{u\cdotp\bar u}s+\frac{1}{16\sqrt2}f^{-1}\bar\Go^{ipqrs}K^{pq}K^{rs}u^i
=-\frac{1}{32\sqrt2}f^{-2}\bar\Go^{ipqrs}K^{pq}K^{rs}K^{ij}\bar u^j.\nn\eea
To see that the rhs is zero, we use complex basis since $K$ is of type (2,0).
Since $K$ is now an anti-symmetric $5\times5$ matrix, it is of rank $0,2,4$. Whichever rank $K$ has, the contraction $\ep_{a_1a_2a_3a_4a_5}K^{a_2a_3}K^{a_4a_5}$ is in the kernel of $K$, hence $\ep_{a_1a_2a_3a_4a_5}K^{a_0a_1}K^{a_2a_3}K^{a_4a_5}=0$.

We check now $I_1$
\bea 0\stackrel{?}{=}2us+\frac{1}{\sqrt2}(\sqrt2f^{-1}K\bar us+\hcancel{\frac{1}{16\sqrt2}f^{-1}\bar\Go^{ipqrs}K^{pq}K^{rs}K^{ji}e^j})
=(2u+f^{-1}K\bar u)s~\checkmark \nn\eea
where we crossed out the second term based on exactly the same argument.

It remains to check $II_3$,
\bea II_3=
-\frac{\sqrt2}{8f}\iota_{\bar u}(K\wedge K)+\iota_{\bar u}(\sqrt2sf^{-1}\iota_{\bar u}\Go+\frac{1}{16\sqrt2f}\bar\Go^{ipqrs}K^{pq}K^{rs}\iota_i\Go).\nn\eea
The middle term is zero, to manipulate the rest, it is easier to switch to holomorphic basis and treat $\Go,\bar\Go$ as (anti-)holomorphic volume form. The last term is then
\bea \bar\Go^{ipqrs}K^{pq}K^{rs}\iota_i\Go=4**(K\wedge K)=4(-1)^{4(5-4)}K\wedge K.\nn\eea
So all equations are satisfied.
\end{proof}

To summarise, we found dense open sets in sec.\ref{sec_StPR} and \ref{sec_StPR_2} that cover the entire subvariety $Y$ in \eqref{tentative}, and we have found explicit local holomorphic coordinates in each open. We conclude that $Y$ is a connected smooth subvariety of $\BB{C}P^{26}$ and we finally finish the proof of thm.\ref{thm_EIII}, that is, the homogeneous space $X=E_6/Spin(10)\times_{\BB{Z}_4}U(1)$ coincides with $Y$.

\section{The Octonion Projective Plane}
For the division algebras $\BB{D}=\BB{R},\BB{C},\BB{H}$, we can define the projective spaces as the equivalence class
\bea &&\BB{D}P^n=\{[x_0,\cdots,x_n]|x_i\in \BB{D}\}/\sim,\nn\\
&& [x_0,\cdots,x_n]\sim [x_0r,\cdots,x_nr],~~r\in \BB{D}^{\times}.\nn\eea
But if $\BB{D}$ is not associative, the equivalence relation above fails to be transitive and the construction fails.
However for small $n=1,2$, there are still ways to extend the construction to $\BB{O}$. We take $n=2$, and review three ways this can be done, see sec 3.4 of \cite{Baez:2001dm}.

First we turn to another description of $\BB{C}P^2$. Let $P$ be a rank 3 hermitian matrix of rank 1, satisfying
\bea PP=\frac12\{P,P\}=P.\nn\eea
This means that $P$ is a projection operator, whose image is of dim 1. The collection of such $P$ gives the set of 1D subspaces in $\BB{C}^3$, which is $\BB{C}P^2$.

Based on this, we take $\BB{D}=\BB{O}$, we can directly generalise the last construction, let $\BB{D}P^2$ be the set of hermitian octonion matrices $P$, with rank 1, and
\bea P\star P:=\frac12\{P,P\}=P.\nn\eea
The rank 1 condition of $P$ is imposed with $\Tr[P]=1$. Both equations above will turn out to be $F_4$-invariant. In fact, a second way of realising $\BB{O}P^2$ is via the coset $F_4/Spin(9)$.

A third way is to simply use the inhomogeneous coordinates. For $\BB{C}P^2$, we can cover it with three open charts: $U_i=\{x_i\neq 0\}$, $i=0,1,2$. On, say, $U_1$, we can use $(x_0/x_1,x_2/x_1)$ as the inhomogeneous coordinates. On the intersection $U_i\cap U_j$, one can easily get the transition function relating the two sets of inhomogeneous coordinates. This procedure would not seem generalisable to $\BB{O}P^2$, since we have three open sets and three intersections, and the transition function may not fulfil the cocycle condition. But it turns out that it is possible to satisfy the cocycle conditions, and thereby describe $\BB{O}P^2$ by local octonion coordinate charts.

\subsection{The Coset Construction}
Before going to EIII, it is helpful to warm up with the real version
\bea \frac{F_4}{Spin(9)}\simeq \BB{O}P^2.\label{coset_F4}\eea
Our starting point is the dim 26 real representation of $F_4$, which we review in sec.\ref{sec_F4}. It is convenient to regard ${\bf 26}$ as a subspace of the representation ${\bf 27}$ of $E_6$ ($F_4$ is a subgroup of $E_6$): ${\bf 27}$ splits as ${\bf 26}\oplus{\bf 1}$.
We use the parametrisation
\bea \Psi=\begin{bmatrix}
           u\oplus \frac{-is}{\sqrt2}e^{10} \\ \psi \\ s
          \end{bmatrix}\in{\bf 26},~~~\Psi_{\emptyset}=\frac{\sqrt2}{\sqrt3}\begin{bmatrix}
                                   ie^{10} \\ 0 \\ \frac{1}{\sqrt2}
                                  \end{bmatrix}\in{\bf 1}\nn\eea
that is, even though $\Psi$ is written as 27 dimensional, its $10^{th}$ and $27^{th}$ components are tied.
It would appear more sensible to work with ${\bf 26}$ since it is an irreducible representation, yet
we found it easier to take ${\bf 26}$ and ${\bf 1}$ together. In fact adding the trivial factor ${\bf 1}$ makes the calculation neater.

As representations of $E_6$, the ${\bf 27}$ is not real, but as representations of $F_4$, there is an isomorphism $\gs:\,{\bf 27}^*\simeq {\bf 27}$, given in lem.\ref{lem_26_real}. Concretely
\bea \Psi=\begin{bmatrix}
           u\oplus \frac{it}{\sqrt2}e^{10} \\ \psi \\ s
          \end{bmatrix}\in{\bf 26}\oplus{\bf 1}\nn\eea
is real if
\bea u\in\BB{R}^9,~~s,t\in \BB{R},~~ie^{10}C_{10}\psi^*=\psi.\nn\eea

We choose a specific vector
\bea \Psi_o=
\begin{bmatrix}
  -\sqrt2ie^{10} \\ 0 \\ 0
\end{bmatrix}.\label{Psi_o}\eea
The stability group of $\Psi_o$ is $Spin(9)$ and so its orbit space is
\bea \opn{Orb}(\Psi_o)=\frac{F_4}{Spin(9)}.\label{orbit_Psi_0}\eea
To show this, note that by adding to $\Psi_o$ a vector proportional to the singlet $\Psi_{\emptyset}$, we can get $[0;0;1]$ i.e. \eqref{Psi_0_main} and so the orbit space can be treated identically as in the $E_6$ case in lem.\ref{lem_stab_grp_e6}.

\subsection{The Jordan Algebra Construction}
All is straightforward enough so far, what is not clear is the relation to octonions.

We will find a map between our Pl\"ucker coordinates $\Psi$ and $3\times3$ octonion matrices such that the Jordan product corresponds to the `product' \eqref{Plucker_E6}.
\begin{definition}({\bf $\diamond$ product})
  Let $\Psi_{1,2}\in {\bf 26}\oplus{\bf 1}$ be real, then the cubic invariant of $E_6$ defines a symmetric product $\diamond$
  by sending $\Psi_1,\Psi_2$ to $d(\Psi_1,\Psi_2,\textrm{-})$ which is in ${\bf 26}^*\oplus{\bf 1}^*$, followed by the isomorphism $\gs$ that leads back to ${\bf 26}\oplus{\bf 1}$.
\end{definition}
Concretely, we make a split $10=9+1$ and write a real $\Psi\in {\bf 26}\oplus{\bf 1}$ as
\bea \Psi=
\begin{bmatrix}
  v \\ \psi \\ s
\end{bmatrix},~~~v=(u,\frac{1}{\sqrt2}it),\nn\eea
with $u,s,t$ real, $\psi$ satisfying $ie^{10}C_{10}\psi^*=\psi$.
The product reads
\bea
  \begin{bmatrix}
    u_1+\frac{i}{\sqrt2}t_1e^{10} \\ \psi_1 \\ s_1
  \end{bmatrix}\diamond\begin{bmatrix}
    u_2+\frac{i}{\sqrt2}t_2e^{10} \\ \psi_2 \\ s_2
  \end{bmatrix}&\stackrel{\ref{prop_d_tensor}}{=}&\gs\begin{bmatrix}
    u_{(1}s_{2)}-\frac{1}{\sqrt2}(\psi_1e^i\psi_2)e^i+\frac{i}{\sqrt2}e^{10}(t_{(1}s_{2)}-\bra\psi_1\psi_2\ket) \\
    -\frac{1}{\sqrt2}u_{(1}\psi_{2)}-\frac{i}{2}e^{10}t_{(1}\psi_{2)} \\
     u_1\cdotp u_2-\frac12t_1t_2 \\
  \end{bmatrix}\nn\\
  &=&\begin{bmatrix}
    u_{(1}s_{2)}-\frac{1}{\sqrt2}(\psi_1e^i\psi_2)e^i -\frac{i}{\sqrt2}e^{10}(t_{(1}s_{2)}-\bra\psi_1\psi_2\ket) \\
    -\frac{i}{\sqrt2}e^{10}u_{(1}\psi_{2)}+\frac{1}{2}t_{(1}\psi_{2)} \\
     u_1\cdotp u_2-\frac12t_1t_2 \\
\end{bmatrix},\label{diamond_product}\eea
where the $i$ index goes over $1,\cdots,9$ and $u_1,u_2\in\BB{R}^9$.

Written in the 8+2 split
\bea v=({\tt u},\frac{1}{\sqrt2}r,\frac{1}{\sqrt2}it),~~~\psi=
\begin{bmatrix}
  \xi \\ \eta
\end{bmatrix},\nn\eea
with ${\tt u},r,t$ real and $\xi,\eta$ Majorana. Then we have the product
\bea
  &&\begin{bmatrix}
    {\tt u}_1+\frac{1}{2\sqrt2}t^{(+}_1e^{-)} \\ \psi_1 \\ s_1
  \end{bmatrix}\diamond\begin{bmatrix}
    {\tt u}_2+\frac{1}{2\sqrt2}t^{(+}_2e^{-)} \\ \psi_2 \\ s_2
  \end{bmatrix}\nn\\
  &=&\begin{bmatrix}
    {\tt u}_{(1}s_{2)}-\frac{1}{\sqrt2}(\xi_{(1}{\tt e}^i\eta_{2)}){\tt e}^i+\frac1{2\sqrt2}(t^-_{(1}s_{2)}-2(\xi_1\xi_2))e^-+\frac1{2\sqrt2}(t^+_{(1}s_{2)}+2(\eta_1\eta_2))e^+ \\
    -\frac{1}{\sqrt2}\begin{bmatrix}
                               {\tt u}_{(1}\eta_{2)} \\ {\tt u}_{(1}\xi_{2)}
                              \end{bmatrix}+\frac{1}{2}\begin{bmatrix}
                               -t^+_{(1}\xi_{2)} \\ t^-_{(1}\eta_{2)}
                              \end{bmatrix} \\
     {\tt u}_1\cdotp {\tt u}_2+\frac14t^{(+}_1t^{-)}_2 \\
 \end{bmatrix},\nn\eea
where $t^{\pm}=r\mp t$, $e^{\pm}=e^9\pm ie^{10}$

Next, we map $\Psi$ to a $3\times3$ Hermitian octonion matrix.
\begin{theorem}\label{thm_star_diamond}
 We make a split $10=8+2$ and write a real $\Psi\in {\bf 26}\oplus{\bf 1}$ as
\bea \Psi=
\begin{bmatrix}
  v \\ \psi \\ s
\end{bmatrix},~~~v=({\tt u},\frac{1}{\sqrt2}r,\frac{1}{\sqrt2}it),~~\psi=
\begin{bmatrix}
  \xi \\ \eta
\end{bmatrix},\nn\eea
with ${\tt u},r,s,t$ real, $\xi,\eta$ Majorana. We assign to $\Psi$ the Hermitian octonion matrix
\bea J(\Psi)=\begin{bmatrix}
  \frac{1}{2}(r-s) & \frac{1}{\sqrt2}\xi & \frac{1}{\sqrt2}{\tt u} \\
  \frac{1}{\sqrt2}\bar\xi & \frac12(s-t) & \frac{1}{\sqrt2}\eta \\
  \frac{1}{\sqrt2}\bar {\tt u} & \frac{1}{\sqrt2}\bar\eta & -\frac{1}{2}(r+s) \\
\end{bmatrix},\label{arrangement}\eea
where we identify $\xi,\eta,{\tt u}$ with octonions as in sec.\ref{sec_g2fCA}.
Then the Jordan product $\star$ and the $\diamond$ product are related as
\bea J(\Psi_1)\star J(\Psi_2)=-\frac12J(\Psi_1\diamond\Psi_2)+\frac14\bra \Psi_1,\Psi_2\ket {\bf 1}_3.\label{master_eq}\eea
Furthermore, the trace
\bea \Psi\to \Tr[J(\Psi)]\nn\eea
is $F_4$-invariant.
\end{theorem}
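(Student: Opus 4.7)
The approach is a direct entry-by-entry verification of the master equation as $3 \times 3$ Hermitian octonion matrices, supplemented by a short conceptual argument for the trace invariance. I would expand both sides of \eqref{master_eq} using the arrangement \eqref{arrangement} and the explicit $8{+}2$-split formula for $\Psi_1 \diamond \Psi_2$ recorded just above the theorem, and compare entries. By Hermiticity only six entries need matching: three diagonal reals and three above-diagonal octonions.

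The diagonal entries reduce, after a short rearrangement, to identities among real quadratics in $r_i, s_i, t_i$ together with the scalar invariants $(\xi_1 \xi_2)$, $(\eta_1 \eta_2)$, and ${\tt u}_1 \cdot {\tt u}_2$ that emerge from the octonion norms $\xi \bar\xi$, $\eta \bar\eta$, ${\tt u} \bar{\tt u}$ inside the Jordan product. The difference between the Jordan side and $-\tfrac12 J(\Psi_1 \diamond \Psi_2)$ on the diagonal is a scalar multiple of the identity, and it is this difference that pins down the explicit form of the $F_4$-invariant pairing $\bra \cdot, \cdot \ket$ on ${\bf 26} \oplus {\bf 1}$ inherited from the $E_6$-invariant pairing ${\bf 27} \otimes {\bf 27}^* \to \BB{C}$. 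Once that pairing is identified, the diagonal match is automatic.

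The off-diagonal entries are where the octonion/spinor dictionary of sec.\ref{sec_g2fCA} does the real work. For the $(1,2)$ entry, the Jordan product yields $\tfrac{1}{4\sqrt 2} t^+_{(1} \xi_{2)} + \tfrac14 {\tt u}_{(1} \bar\eta_{2)}$, whereas $-\tfrac12 J(\Psi_1 \diamond \Psi_2)_{(1,2)}$ equals $\tfrac{1}{4\sqrt 2} t^+_{(1} \xi_{2)} + \tfrac14 {\tt u}_{(1} \eta_{2)}$; the agreement ${\tt u} \bar\eta = {\tt u} \eta$ here is precisely the triality identification that the Clifford action $V_8 \otimes \Gd^-_8 \to \Gd^+_8$ coincides with octonion multiplication under the chosen dictionary. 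The $(1,3)$ and $(2,3)$ entries use the two remaining triality rules. The most delicate step is matching the bilinear $-\tfrac{1}{\sqrt 2}(\xi_{(1} {\tt e}^i \eta_{2)}) {\tt e}^i$ appearing in the ${\tt u}$-component of $\Psi_1 \diamond \Psi_2$ against the cross-term contributions in the Jordan matrix product (of the form $\xi \bar\eta + \eta \bar\xi$-type expressions, symmetrised); this rests on a $Spin(8)$-Fierz identity re-expressed as an octonion bilinear identity. I expect the bookkeeping of conjugations, signs, and representation-to-octonion assignments in this dictionary to be the principal obstacle; once tabulated, every entry falls into place.

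For the trace invariance, a direct evaluation from \eqref{arrangement} gives $\Tr[J(\Psi)] = -\tfrac12 (s+t)$. The defining constraint of ${\bf 26} \subset {\bf 27}$ in the paper's parametrisation is $v_{10} = -is/\sqrt 2$, that is $t = -s$, so $s + t$ vanishes identically on ${\bf 26}$. Consequently $\Tr[J(\Psi)]$ depends only on the coefficient of the singlet $\Psi_{\emptyset}$ in the decomposition ${\bf 26} \oplus {\bf 1}$, and $F_4$ acts trivially on that factor.
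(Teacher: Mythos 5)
Your proposal follows essentially the same route as the paper's proof in the appendix: a direct entry-by-entry comparison of $J(\Psi_1)\star J(\Psi_2)$ with $-\frac12 J(\Psi_1\diamond\Psi_2)$, using the Clifford--octonion dictionary of prop.\ref{prop_oct_mult_acatar} for the off-diagonal entries and identifying the residual diagonal scalar with $\frac14\bra\Psi_1,\Psi_2\ket$, together with the same trace argument ($\Tr[J(\Psi)]=-\frac12(s+t)\propto\bra\Psi,\Psi_{\emptyset}\ket$, which sees only the ${\bf 1}$ component on which $F_4$ acts trivially). The only caveat is that your text is an outline rather than a completed computation, but every ingredient you name is the correct one.
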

\noindent The proof is a direct computation, given in sec.\ref{sec_EJA}.
We get an easy but important corollary
\begin{corollary}
  The coset $F_4/Spin(9)$ is the octonion projective space $\BB{O}P^2$, which can be described by the equation
  \bea P\star P=P,~~~\Tr[P]=1.\label{PP=P}\eea
  The correspondence reads
  \bea \opn{Orb}(\Psi_o)\ni \Psi\to P=J(\Psi).\nn\eea
\end{corollary}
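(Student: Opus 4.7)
The plan is to verify $P\star P=P$ and $\Tr P=1$ at the single base point $\Psi_o$, propagate both identities along the $F_4$-orbit by equivariance, and then argue surjectivity onto the projector variety by dimension and connectedness.

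First I would evaluate $J(\Psi_o)$ directly from \eqref{arrangement}. Reading off $\Psi_o=[-\sqrt{2}\,ie^{10};0;0]$ in the $10=8+2$ split gives ${\tt u}=0$, $r=0$, $t=-2$, $\xi=\eta=0$, $s=0$, so $J(\Psi_o)=\mathrm{diag}(0,1,0)$, which is visibly a rank-one hermitian projector of unit trace. The same direct evaluation on $\Psi_\emptyset$ produces $\mathbf{1}_3=-2\sqrt{3}\,J(\Psi_\emptyset)$, and this lets me rewrite the master equation \eqref{master_eq} purely in terms of $J$ as
\[J(\Psi)\star J(\Psi)-J(\Psi)\;=\;J\!\left(-\tfrac{1}{2}\Psi\diamond\Psi-\Psi-\tfrac{\sqrt{3}}{2}\bra\Psi,\Psi\ket\,\Psi_\emptyset\right).\]
Because $J$ is an injective $F_4$-linear map from ${\bf 26}\oplus{\bf 1}$ into $\mathrm{Herm}_3(\BB{O})$ and the expression inside $J$ on the right is an $F_4$-covariant quadratic polynomial in $\Psi$, its vanishing on the whole orbit is equivalent to its vanishing at $\Psi_o$, which I would check in a few lines using the component formula \eqref{diamond_product} for $\diamond$ together with the normalization $\bra\Psi_o,\Psi_o\ket=2$. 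Combined with the $F_4$-invariance of $\Tr J(\Psi)$ asserted in thm.\ref{thm_star_diamond}, this delivers $P\star P=P$ and $\Tr P=1$ throughout $\opn{Orb}(\Psi_o)$.

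For the reverse inclusion---that every $P\in\mathrm{Herm}_3(\BB{O})$ with $P\star P=P$ and $\Tr P=1$ is of the form $J(\Psi)$ for some $\Psi\in\opn{Orb}(\Psi_o)$---the argument I would give is that $J$ restricted to the orbit is a closed $F_4$-equivariant embedding of the compact $16$-dimensional manifold $F_4/Spin(9)$ into the variety $Z=\{P:\,P\star P=P,\,\Tr P=1\}$, whose image is therefore an open-and-closed $F_4$-invariant $16$-submanifold, i.e. a connected component of $Z$. The main obstacle is identifying this component with all of $Z$; the cleanest route is to invoke the classical fact (sec.~3.4 of \cite{Baez:2001dm}) that the rank-one trace-one projectors in $\mathrm{Herm}_3(\BB{O})$ form a single $F_4$-orbit, which is precisely the statement that $Z$ is connected. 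An alternative, more self-contained route would be to produce inhomogeneous octonion charts for $Z$ along the lines of the third construction reviewed at the start of this section and check directly that $Z$ is a single connected manifold.
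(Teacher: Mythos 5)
Your proposal is correct and follows essentially the same route as the paper: verify $J(\Psi_o)=\mathrm{diag}(0,1,0)$ and the idempotent/trace identities at the base point, propagate them over the orbit by $F_4$-equivariance of the master equation, and obtain surjectivity from the fact that $J$ is an equivariant local diffeomorphism onto the variety $\{P\star P=P,\ \Tr P=1\}$, whose smoothness and connectedness must still be established by exhibiting local (Veronese/octonion) charts. The paper takes exactly your second, self-contained option for that last step, deferring the charts to the remainder of the section rather than citing the transitivity of $F_4$ on primitive idempotents.
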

\begin{proof}
The orbit of $\Psi_o$ is $F_4/Spin(9)$, see \eqref{orbit_Psi_0} and the discussion there.
A direct calculation shows that $\Psi_o$ satisfies
\bea \Psi_o\diamond\Psi_o=-2\Psi_o-2\sqrt3\Psi_{\emptyset},~~~\bra\Psi_o,\Psi_o\ket=2,\nn\eea
then \eqref{master_eq} gives
\bea J(\Psi_o)\star J(\Psi_o)=-\frac12J(\Psi_o\diamond\Psi_o)+\frac14\bra\Psi_o,\Psi_o\ket{\bf 1}=J(\Psi_o+\sqrt3\Psi_{\emptyset})+\frac12{\bf 1}=J(\Psi_o),\nn\eea
which gives \eqref{PP=P}. In fact, $J(\Psi_o)=\opn{diag}[0,1,0]$, the above relation is immediate.
If this relation is to hold for $\Psi_o$, then it holds for all $\Psi$ in the $F_4$-orbit of $\Psi_o$.
The statement about the trace works in the same way.
At the same time, \eqref{PP=P} describes $\BB{O}P^2$ as we reviewed in the beginning of this section.

On the other hand, we need to show that any $P$ satisfying \eqref{PP=P} is in the image of $J(\opn{Orb}(\Psi_o))$. The logic of the proof is similar to that of thm.\ref{thm_cut_by_Plucker}: comparing the tangent space at $[1]\in F_4/Spin(9)$ and at $J(\Psi_o)$ shows that the map $J$ is a local diffeomorphism. It remains to prove that \eqref{PP=P} cuts out a smooth connected manifold. For this, we need to solve \eqref{PP=P} and get the local coordinate charts, which we do in the remaining part of this section.
\end{proof}

\smallskip

To get the local octonion coordinate charts, we spell out the equations \eqref{PP=P} first
\bea \frac{1}{2}(r-t-2)\xi+\frac{1}{\sqrt2}{\tt u}\eta=0,\nn\\
-\frac12(r+t+2)\eta+\frac{1}{\sqrt2}{\tt u}\xi=0,\nn\\
-(s+1){\tt u}+\frac{1}{\sqrt2}(\xi{\tt e}^i\eta){\tt e}^i=0,\nn\\
\frac12(r-s)(r-s-2)+(\xi\xi)+{\tt u}\cdotp{\tt u}=0,\nn\\
\frac12(s-t)(s-t-2)+(\xi\xi)+(\eta\eta)=0,\nn\\
\frac12(r+s)(r+s+2)+(\eta\eta)+{\tt u}\cdotp{\tt u}=0,\nn\\
s+t=-2.\label{7_eq}\eea
The last four equations give
\bea(\xi\xi)=(r-s)(s+1),~~~(\eta\eta)=-(r + s)(s + 1),~~~{\tt u}\cdotp{\tt u}=-\frac12(r+s)(r-s).\nn\eea
At this point, we digress and define the so called `Veronese coordinates' following \cite{salzmann2011compact}
\begin{definition}\label{def_Veronese}(Sec 16.1 \cite{salzmann2011compact})
  Let $(\vec x;\vec\gl)=(x_1,x_2,x_3;\gl_1,\gl_2,\gl_3)\in\BB{O}^3\times\BB{R}^3$. We say $(\vec x,\vec\gl)$ is a Veronese vector if
  \bea \gl_1\bar x_1=x_2\star x_3,~~|x_1|^2=\gl_2\gl_3,~~~\textrm{cyc perm}~(1,2,3).\nn\eea
\end{definition}
\begin{proposition}(Sec 16.8 \cite{salzmann2011compact})
  The Veronese condition plus $\sum\gl_i=1$ is equivalent to \eqref{PP=P}.
\end{proposition}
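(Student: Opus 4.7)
The plan is to prove the equivalence by a direct component-wise comparison: unpack the Jordan equation $P\star P=P$ entry by entry and match each equation against a Veronese relation, using $\sum\gl_i=1$ to collapse the linear combinations of diagonal entries that appear. A preliminary observation that makes the whole argument go through is that $(P\star P)_{ij}=(P^2)_{ij}$ is a sum of only binary octonion products of entries of $P$, so non-associativity of $\BB{O}$ never intervenes; the computation can be carried out formally as in any unital $*$-algebra.

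Concretely, I would identify the Veronese data $(\vec x,\vec\gl)$ with the entries of a Hermitian octonion matrix via
$$ P=\begin{bmatrix} \gl_1 & x_3 & \bar x_2 \\ \bar x_3 & \gl_2 & x_1 \\ x_2 & \bar x_1 & \gl_3 \end{bmatrix}, $$
under which $\opn{Tr}P=1$ is tautologically $\gl_1+\gl_2+\gl_3=1$. The content of the proposition then reduces to showing that, in the presence of this linear constraint, the six independent entries of $P\star P=P$ match the six Veronese relations.

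For the off-diagonal case I would compute, for instance, $(P^2)_{12}=(\gl_1+\gl_2)x_3+\bar x_2\bar x_1=(1-\gl_3)x_3+\overline{x_1x_2}$; setting this equal to $x_3$ and conjugating (using $\gl_3\in\BB{R}$) produces the Veronese relation $\gl_3\bar x_3=x_1x_2$. The entries $(1,3)$ and $(2,3)$ give the Veronese relations at indices $2$ and $1$ by the identical manipulation. For the diagonal entry $(1,1)$ the equation reads $|x_2|^2+|x_3|^2=\gl_1(\gl_2+\gl_3)$, and similarly for $(2,2)$, $(3,3)$; summing the three and then subtracting each in turn isolates $|x_i|^2=\gl_j\gl_k$ individually. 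The converse direction — that the Veronese relations together with $\sum\gl_i=1$ imply $P\star P=P$ — is immediate by substitution.

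The argument is essentially bookkeeping and does not present any serious obstacle. The only point that will demand care is matching the cyclic labelling: one must fix a convention, say $(x_1,x_2,x_3)\leftrightarrow(P_{23},\overline{P_{13}},P_{12})$, and then respect the order of the octonion factors in the off-diagonal equations so that the cyclic structure of def.\ref{def_Veronese} is reproduced faithfully. Aside from this potential source of sign/ordering confusion, the proof is a routine verification.
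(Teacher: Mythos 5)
Your proof is correct, but it takes a genuinely more elementary route than the paper's. You treat $P$ as an abstract Hermitian octonion matrix with generic entries $(\gl_i,x_i)$ and verify the equivalence by direct entry-wise computation of $P^2$; your two key observations --- that $(P\star P)_{ij}$ involves only binary octonion products so non-associativity never enters, and that the three diagonal equations $|x_j|^2+|x_k|^2=\gl_i(\gl_j+\gl_k)$ can be decoupled by the sum-and-subtract trick using $\sum\gl_i=1$ --- are both sound, and the off-diagonal bookkeeping ($(P^2)_{12}=(1-\gl_3)x_3+\bar x_2\star\bar x_1$, then conjugate) correctly reproduces the cyclic Veronese relations $\gl_k\bar x_k=x_i\star x_j$. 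The paper instead routes the argument through its spinor parametrisation: it starts from the explicit system \eqref{7_eq} (which is \eqref{PP=P} written out for $P=J(\Psi)$ as in \eqref{arrangement}), uses prop.\ref{prop_oct_mult_acatar} to rewrite the Clifford actions ${\tt u}\xi$, ${\tt u}\eta$, $(\xi{\tt e}^i\eta){\tt e}^i$ as octonion products, and then reads off the identification $(\vec x;\vec\gl)=(2^{-1/2}{\tt u},2^{-1/2}\bar\eta,2^{-1/2}\bar\xi;s+1,2^{-1}(r-s),-2^{-1}(r+s))$. What the paper's approach buys is the explicit dictionary between the coset/Clifford data and the Veronese vector, which is what it actually needs downstream (for the local octonion charts and the complexified version); what your approach buys is a self-contained proof of the stated equivalence that does not depend on any choice of Clifford representation or on Theorem \ref{thm_star_diamond}. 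The only caveat, which you already flag, is that your labelling of the entries is a cyclic relabelling of the paper's \eqref{arrangement}; since the Veronese condition is cyclically symmetric this is harmless.
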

\begin{proof}
  Let us recall that Clifford actions such as ${\tt u}\xi$ can be written in terms of octonion multiplications, see prop.\ref{prop_oct_mult_acatar}. We can write the first three equations of \eqref{7_eq} as
  \bea &&{\tt u}\star\bar\eta=-\frac{1}{\sqrt2}(r+s)\xi,~~\bar\xi\star{\tt u}=\frac{1}{\sqrt2}(r-s)\eta,~~\xi\star\eta=\sqrt{2}(s+1){\tt u},\nn\eea
  where we have used $s+t=-2$.
  We also recall that for $\BB{O}P^2$, the spinors $\xi,\eta$ are real, and so $(\xi\xi)=|\xi|^2$, $(\eta\eta)=|\eta|^2$, thus
  \bea |\xi|^2=(r-s)(s+1),~~|\eta|^2=-(r+s)(s+1),~~|{\tt u}|^2=-\frac12(r+s)(r-s).\nn\eea
  So we can set
  \bea (x_1,x_2,x_3;\gl_1,\gl_2,\gl_3)=(2^{-1/2}{\tt u},2^{-1/2}\bar\eta,2^{-1/2}\bar\xi,s+1,2^{-1}(r-s),-2^{-1}(r+s)),\nn\eea
  the vector is Veronese and $\sum_i\gl_i=1$. The converse is now also clear.
 \end{proof}

Now we can get the local octonion coordinates on the open sets $U_i$.
\begin{proposition}
  On the open set $U_i=\{\gl_i\neq 0\}$, we set the local octonion coordinates as
  \bea U_i:~~~(a_i,b_i)=(\gl_i^{-1}x_{i+1},\gl_i^{-1}x_{i+2}).\nn\eea
  On the intersection $U_i\cap U_{i+1}$, the transition functions read
  \bea a_{i+1}=\bar b_i^{-1},~~~b_{i+1}=b_i^{-1}\star \bar a_i\nn\eea
  which satisfy the cocycle condition.
\end{proposition}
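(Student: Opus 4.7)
The plan has three steps: verify that on each $U_i$ the pair $(a_i,b_i)\in\mathbb{O}^2$ genuinely parametrises the Veronese data; derive the transition formulas directly from the Veronese relations; and check the cocycle by showing that the map $\tau\colon(a,b)\mapsto(\bar b^{-1},\,b^{-1}\!\star\bar a)$ satisfies $\tau^3=\mathrm{id}$. The workhorse throughout is Artin's theorem: any subalgebra of $\mathbb{O}$ generated by two elements is associative, so inside it the standard identities $\overline{xy}=\bar y\bar x$, $(xy)^{-1}=y^{-1}x^{-1}$, $\bar x\star(x\star y)=|x|^2 y$, and $\overline{x^{-1}}=\bar x^{-1}$ hold without ambiguity.

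For the parametrisation on $U_i$, the squared-norm Veronese relations give $\lambda_{i+1}=\lambda_i|b_i|^2$ and $\lambda_{i+2}=\lambda_i|a_i|^2$; combined with $\sum_j\lambda_j=1$ this yields $\lambda_i=(1+|a_i|^2+|b_i|^2)^{-1}$, and then $x_{i+1}=\lambda_i a_i$, $x_{i+2}=\lambda_i b_i$, with $x_i=\lambda_i(\bar b_i\star\bar a_i)$ fixed by the product relation $\lambda_i\bar x_i=x_{i+1}\star x_{i+2}$. Hence $(a_i,b_i)$ is a diffeomorphism from $\mathbb{O}^2$ onto $U_i$.

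For the transition on $U_i\cap U_{i+1}$, compute $a_{i+1}=\lambda_{i+1}^{-1}x_{i+2}=(\lambda_i|b_i|^2)^{-1}(\lambda_i b_i)=b_i/|b_i|^2=\bar b_i^{-1}$, which is the first claimed formula; and $b_{i+1}=\lambda_{i+1}^{-1}x_i=|b_i|^{-2}(\bar b_i\star\bar a_i)=b_i^{-1}\star\bar a_i$, upon pulling the real scalar $|b_i|^{-2}$ through and using $|b_i|^{-2}\bar b_i=b_i^{-1}$ inside the associative subalgebra generated by $a_i,b_i$. (Equivalently, one can derive this by solving the Veronese identity $\lambda_{i+1}\bar x_{i+1}=x_{i+2}\star x_i$ for $x_i$ via the alternative-algebra relation $\bar x_{i+2}\star(x_{i+2}\star x_i)=|x_{i+2}|^2 x_i$.)

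For the cocycle, $\tau^3=\mathrm{id}$ can be verified by direct octonion arithmetic: every product and inverse that appears lies in the associative subalgebra generated by $a$ and $b$, so a short calculation using the identities above gives $\bar b_2=\overline{b^{-1}\bar a}=a\bar b^{-1}$ and hence $a_3=(a\bar b^{-1})^{-1}=\bar b\star a^{-1}$, together with $b_3=(b^{-1}\bar a)^{-1}\overline{\bar b^{-1}}=\bar a^{-1}b\cdot b^{-1}=\bar a^{-1}$, so $\tau^2(a,b)=(\bar b\star a^{-1},\,\bar a^{-1})$; one more application then returns $(a,b)$. The only (mild) obstacle is bookkeeping the non-associativity at each step, which Artin's theorem resolves by confining every manipulation to a two-generated associative subalgebra.
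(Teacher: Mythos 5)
Your proof is correct and follows essentially the same route as the paper: the transition functions are obtained by direct computation from the Veronese relations together with $x^{-1}=\bar x/|x|^2$, and the cocycle condition is handled by confining every manipulation to the associative subalgebra generated by two octonions (the paper phrases this via the total antisymmetry of the associator rather than Artin's theorem, but these are equivalent tools). Your explicit check that $\tau^3=\mathrm{id}$, and your verification that $(a_i,b_i)$ genuinely parametrises $U_i$, are somewhat more detailed than the paper's sketch, but not a different argument.
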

\begin{proof}
The transition functions follow from a direct computation. For example, setting $i=1$, on $U_1\cap U_2$, we have $\gl_1\neq 0$ and $\gl_2\neq 0$, implying that $x_3\neq 0$ and hence invertible,
\bea &&b_2=\frac{x_1}{\gl_2}=\frac{\bar x_3}{\gl_1}\star\frac{\bar x_2}{\gl_1}\frac{\gl_1}{\gl_2}
=\frac{\bar x_3}{\gl_1}\star\frac{\bar x_2}{\gl_1}\frac{\gl^2_1}{|x_3|^2}=\left(\frac{x_3}{\gl_1}\right)^{-1}\star\frac{\bar x_2}{\gl_1}=b_1^{-1}\star\bar a_1,\nn\\
&&a_2=\frac{x_3}{\gl_2}=\left(\frac{\bar x_3}{\gl_1}\right)^{-1}=\bar b_1^{-1}.\nn\eea

On the intersection $U_1\cap U_2\cap U_3$, one would naively expect the cocycle condition to fail, due to the lack of associativity. But let us recall that the octonions form an alternating algebra, in that its associator $(a\star b)\star c-a\star(b\star c)$ is totally anti-symmetric in $a,b,c$. Also if any one of $a,b,c$ is real, the associativity holds too (see prop.\ref{prop_alternating}). Thus we can safely change the brackets in e.g. $(\bar a\star a)\star c=\bar a\star(a\star c)$ and
$a^{-1}\star(a\star c)=c$ if we remember $\bar a=2\re a-a$ and $a^{-1}\sim \bar a$.
\end{proof}
\begin{remark}
Since the transition function given here exactly mimics that of the usual complex projective plane, one can rightfully regard $F_4/Spin(9)$ as the octonion version of the projective plane.

Note also that the transition functions involves multiplying $b_i^{-1}$ for $\bar b_i^{-1}$ from the appropriate side. This is chosen so that we can still satisfy the cocycle conditions on triple intersections. This `explains' why we can only have $\BB{O}P^1$ or $\BB{O}P^2$: essentially we run out of ways of fiddling with the position of $b_i^{-1}$ or $\bar b_i^{-1}$.
\end{remark}

\subsection{Complex Octonion Projective Plane}
We can apply the treatment of the previous section and relate the coset $E_6/Spin(10)\times_{\BB{Z}_4}U(1)$ with the $\diamond$ product.\footnote{We thank the referee for pointing two other related formulations of EIII using either the Jordan pair \cite{truini1982e6} or the Jordan triple system \cite{Koecher1969AnEA}. Especially, with the Jordan pairs one has the so called inner ideals, with which one can define complex octonion points, lines and planes and their incidence relations neatly.} But this is not straightforward, since for $E_6$, there is no relation between ${\bf 27}$ and its conjugate, so we cannot define the trace.
Furthermore, the $d$ tensor gives a map ${\bf 27}\otimes{\bf 27}\to {\bf 27}^*$ and we cannot turn ${\bf 27}^*$ back to ${\bf 27}$, and so the $d$ tensor does not give us an algebra. However, let us formally mimic the same procedure and see what we get.

Parameterise $\Psi\in{\bf 27}$ as
  \bea \Psi=
\begin{bmatrix}
  v \\ \psi \\ s
\end{bmatrix},~~~v=({\tt u},\frac{1}{\sqrt2}r,\frac{1}{\sqrt2}it),~~\psi=
\begin{bmatrix}
  \xi \\ \eta
\end{bmatrix},\nn\eea
where we no longer impose $ie^{10}C\psi^*=\psi$. Assign to $\Psi$ a complex octonion matrix
\bea J(\Psi)=\begin{bmatrix}
  \frac{1}{2}(r+t) & \frac{1}{\sqrt2}\xi & \frac{1}{\sqrt2}{\tt u} \\
  \frac{1}{\sqrt2}\bar\xi & s & \frac{1}{\sqrt2}\eta \\
  \frac{1}{\sqrt2}\bar {\tt u} & \frac{1}{\sqrt2}\bar\eta & -\frac{1}{2}(r-t) \\
\end{bmatrix}\nn\eea
where now $\xi,\eta,{\tt u}$ are complexified octonions and the bar operation is $\BB{C}$-linear.
\begin{proposition}
There is an $E_6$-invariant determinant of the complex octonion matrix $J(\Psi)$
\bea \det J(\Psi)=-\frac12s(v\cdotp v)+\frac{\sqrt2}{2}(\eta{\tt u}\xi)+\frac14(\xi\xi)t^+-\frac14(\eta\eta)t^-.\nn\eea
If $\Psi$ is in the $E_6$-orbit of $\Psi_0$ in \eqref{Psi_0_main}, then $J(\Psi)$ satisfy
\bea J\star J=(t+s)J=\Tr[J]J\label{mean_unclear}\eea
\end{proposition}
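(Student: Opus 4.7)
The proof naturally splits into two parts: showing (a) that $\det J$ is $E_6$-invariant, and (b) that the Jordan identity holds on the orbit of $\Psi_0$.

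For (a), I would identify $\det J(\Psi)$ with a scalar multiple of the cubic $E_6$-invariant $d(\Psi,\Psi,\Psi)$ of prop.\ref{prop_d_tensor}. Setting $\Psi_1=\Psi_2=\Psi$ in \eqref{should_be_zero} gives
\bea
d(\Psi,\Psi,\cdotp) \;=\; \bigl[\,2vs - \tfrac{1}{\sqrt2}(\psi e^i\psi)e^i;\; -\sqrt2\,v\psi;\; v\cdotp v\,\bigr] \in {\bf 27}^*,\nn
\eea
and contracting with $\Psi=[v;\psi;s]$ under the canonical pairing gives $d(\Psi,\Psi,\Psi) = 3s(v\cdotp v) - \tfrac{3}{\sqrt2}(\psi\, v\, \psi)$. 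Unpacking the second term in the $8+2$ Clifford split using the gamma matrices of sec.\ref{sec_StPR}, and using that $\gc_8$ acts by $+1$ on $\xi\in\Gd^+_8$ and $-1$ on $\eta\in\Gd^-_8$, the bilinear $(\psi v\psi)$ breaks into $\tfrac{1}{\sqrt2}(\xi\xi)t^+$, $-\tfrac{1}{\sqrt2}(\eta\eta)t^-$, and $2(\eta\,{\tt u}\,\xi)$. Combining with the overall $-\tfrac{3}{\sqrt2}$ prefactor and setting $\alpha=-1/6$ reproduces the stated formula for $\det J(\Psi)$ exactly, so $\det J = -\tfrac16\, d$ is $E_6$-invariant.

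For (b), I would first verify the identity at the base point. At $\Psi_0=[0;0;1]$ all variables except $s=1$ vanish, so $J(\Psi_0)=\opn{diag}(0,1,0)$ is a rank-one idempotent with trace $1$, and both sides of $J\star J = (t+s)\,J$ equal $J(\Psi_0)$. To propagate from the base point to the whole $E_6$-orbit I would invoke Freudenthal's rank-one characterization. Define the adjoint
\bea
J^{\#} \;=\; J\star J - \Tr[J]\, J + \tfrac12\bigl(\Tr[J]^2 - \Tr[J\star J]\bigr){\bf 1}_3,\nn
\eea
which arises (up to the trace identification ${\bf 27}^*\cong{\bf 27}$) as the polarization of $\det$. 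Since $\det$ is $E_6$-invariant by (a), the map $J\mapsto d(J,J,\cdotp)\in{\bf 27}^*$ is $E_6$-equivariant, so the locus $\{J^{\#}=0\}$ is $E_6$-invariant. A short manipulation shows $J^{\#}=0 \Leftrightarrow J\star J = \Tr[J]\,J$: one direction is immediate, and for the other, taking the trace of $J^{\#}=0$ forces $\Tr[J\star J]=\Tr[J]^2$, which reduces $J^{\#}=0$ to $J\star J = \Tr[J]\,J$. Hence $\{J:J\star J = \Tr[J]\,J\}$ is an $E_6$-invariant subvariety containing $J(\Psi_0)$, and therefore contains the whole orbit.

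The main obstacle is the bookkeeping in (a) --- matching the many coefficients, signs, and factors of $\sqrt2$ between $d(\Psi,\Psi,\Psi)$ and $\det J(\Psi)$ in the $8+2$ split --- which is tedious but mechanical. Once (a) is in hand, the rank-one characterization in (b) applies cleanly with no additional input.
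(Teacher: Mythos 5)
Your proposal is correct, but part (b) takes a genuinely different route from the paper. For the determinant, you do exactly what the paper does: define $\det J=-\frac16 d(\Psi,\Psi,\Psi)$ and unpack \eqref{should_be_zero} in the $8+2$ split; the coefficients you report check out. For the Jordan identity, however, the paper argues by brute force: it writes the Pl\"ucker relations $d(\Psi,\Psi,\textrm{-})=0$ (which hold on the orbit by cor.~\ref{cor_plucker}) in $8$-dimensional components as \eqref{Veronese_C}, and then computes all six independent entries of $J\star J$ directly, finding each equal to $(t+s)$ times the corresponding entry of $J$. You instead verify the identity only at the base point $J(\Psi_0)=\diag(0,1,0)$ and propagate it over the orbit via the Freudenthal adjoint $J^{\#}$, using that $\{J^{\#}=0\}=\{d(\Psi,\Psi,\textrm{-})=0\}$ is $E_6$-stable and that $J^{\#}=0\Leftrightarrow J\star J=\Tr[J]J$ (your trace manipulation for the converse direction is right). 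This is conceptually cleaner and is essentially the classical Freudenthal--Springer treatment of rank-one elements, but be aware of what it costs in this paper's framework: the identity $\Tr[J^{\#}\star Y]=D_Y\det(J)$, i.e.\ that your $J^{\#}$ really is the trace-dual of $-\frac12 d(\Psi,\Psi,\textrm{-})$ in the paper's normalisation, is nowhere established here --- thm.~\ref{thm_star_diamond} is the closest analogue but uses a \emph{different} arrangement of the diagonal entries (the real $F_4$ case), so it cannot be invoked verbatim. Verifying that adjoint identity in the present conventions is a computation of essentially the same size as the paper's direct check, so your approach trades explicit computation for a citation to standard cubic Jordan algebra theory. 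One further difference worth noting: the paper's direct computation produces the complexified Veronese relations \eqref{Veronese_C} as a byproduct, which are reused in the proposition immediately following; your route does not yield them.
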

\noindent We stress that the trace here merely mimics the $F_4$ case, and is not $E_6$ invariant.
\begin{proof}
 We should clearly define the determinant using the $d$ tensor
 \bea \det J(\Psi)=-\frac16d(\Psi,\Psi,\Psi).\nn\eea
 The computation uses prop.\ref{prop_d_tensor} and is straightforward. We give below the same formula entirely in terms of octonions.

 As for the second relation, \eqref{6_eq} gives
 \bea (J\star J)_{12}&=&\frac{1}{2\sqrt2}(r+t+2s)\xi+\frac12{\tt u}\eta=\frac{1}{\sqrt2}(t+s)\xi,\nn\\
 (J\star J)_{23}&=&\frac{1}{2\sqrt2}(2s-r+t)\eta+\frac12{\tt u}\xi=\frac{1}{\sqrt2}(t+s)\xi,\nn\\
 (J\star J)_{13}&=&\frac{1}{\sqrt2}t\xi+\frac12(\xi {\tt e}^i\eta){\tt e}^i=\frac{1}{\sqrt2}(t+s)\xi,\nn\\
 (J\star J)_{11}&=&\frac14(r+t)^2+\frac12(\xi\xi)+\frac12{\tt u}\cdotp{\tt u}=\frac12(s+t)(r+t),\nn\\
 (J\star J)_{22}&=&s^2+\frac12(\xi\xi)+\frac12(\eta\eta)=s^2+st=(s+t)s,\nn\\
 (J\star J)_{33}&=&\frac14(r-t)^2+\frac12(\eta\eta)+\frac12{\tt u}\cdotp{\tt u}=-\frac12(s+t)(r-t),\nn\eea
  where we used the Pl\"ucker relation written in 8D terms (see sec.\ref{sec_StPR})
  \bea &&{\tt u}\eta=-2^{-1/2}(r-t)\xi,~~{\tt u}\xi=2^{-1/2}(r+t)\eta,~~~(\xi {\tt e}^i\eta){\tt e}^i=2^{1/2}s{\tt u},\nn\\
  &&(\xi\xi)=s(r+t),~~(\eta\eta)=-s(r-t),~~{\tt u}\cdotp{\tt u}=-\frac12(r^2-t^2).\label{Veronese_C}\eea
  From these we get \eqref{mean_unclear}.
\end{proof}
If one writes a Hermitian complex octonion matrix in generic terms, then we have the standard determinant for $3\times 3$ octonion matrices
\bea \det\begin{bmatrix}
          \ga & \bar z & \bar y \\ z & \gb & x \\ y & \bar x & \gc
         \end{bmatrix}=\ga\gb\gc-(\ga x\star\bar x+\gb y\star\bar y+\gc z\star\bar z)+2z\cdotp(x\star y).\nn\eea

More importantly, the relations \eqref{Veronese_C} is actually the complexification of the Veronese condition def.\ref{def_Veronese}. We have
\begin{proposition}
  Setting
  \bea (\vec x;\vec\gl)=(2^{-1/2}{\tt u},2^{-1/2}\bar\eta,2^{-1/2}\bar\xi;s,2^{-1}(t+r),2^{-1}(t-r)),\nn\eea
  then the Pl\"ucker relations \eqref{equations_1}, \eqref{equations_2}, \eqref{equations_3} hold iff $(\vec x;\vec\gl)$ is complexfied Veronese:
  \bea x_i\star x_{i+1}=\gl_{i+2}\bar x_{i+2},~~~x_i\star \bar x_i=\gl_{i+1}\gl_{i+2}.\nn\eea
\end{proposition}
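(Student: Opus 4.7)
(Plan)
The strategy is to reduce the statement to an already-completed translation dictionary plus a direct substitution, rather than redoing any spinor manipulations from scratch.

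\textbf{Step 1 (Reduce to the 8+2 form).} The first thing I would do is invoke the equivalence, already established in sec.\ref{sec_StPR}, between the Pl\"ucker relations \eqref{equations_1}--\eqref{equations_3} and the system \eqref{Veronese_C}:
\bea
&&{\tt u}\eta=-2^{-1/2}(r-t)\xi,\quad {\tt u}\xi=2^{-1/2}(r+t)\eta,\quad (\xi{\tt e}^i\eta){\tt e}^i=2^{1/2}s{\tt u},\nn\\
&&(\xi\xi)=s(r+t),\quad (\eta\eta)=-s(r-t),\quad {\tt u}\cdotp{\tt u}=-\tfrac12(r^2-t^2).\nn
\eea
This splits the six Pl\"ucker equations into three ``bilinear'' equations (the first row) and three ``quadratic'' equations (the second row), which will be matched respectively to the two families of Veronese conditions $x_i\star x_{i+1}=\gl_{i+2}\bar x_{i+2}$ and $x_i\star\bar x_i=\gl_{i+1}\gl_{i+2}$.

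\textbf{Step 2 (Translate Clifford actions to octonion products).} With the identifications from sec.\ref{sec_g2fCA} (and specifically prop.\ref{prop_oct_mult_acatar}), the 8D Clifford actions ${\tt u}\xi$, ${\tt u}\eta$, and the bilinear $(\xi{\tt e}^i\eta){\tt e}^i$ rewrite as octonion products of the form ${\tt u}\star\bar\eta$, $\bar\xi\star{\tt u}$, and $\xi\star\eta$ (up to the conjugation and ordering prescribed by the chirality assignments of $\xi,\eta$). Likewise the spinor pairings $(\xi\xi)$ and $(\eta\eta)$, interpreted as $\BB{C}$-bilinear, become $\xi\star\bar\xi$ and $\eta\star\bar\eta$, while ${\tt u}\cdotp{\tt u}$ becomes ${\tt u}\star\bar{\tt u}$ (all now in the complexified octonion algebra, where the bar operation is $\BB{C}$-linear). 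This is the same translation already used in the real case \eqref{diamond_product}; the only novelty is that $\xi,\eta,{\tt u}$ are no longer Majorana/real, but the formulas are algebraic and extend verbatim to the complexification.

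\textbf{Step 3 (Substitute and match).} Now I substitute
\bea (x_1,x_2,x_3;\gl_1,\gl_2,\gl_3)=(2^{-1/2}{\tt u},\,2^{-1/2}\bar\eta,\,2^{-1/2}\bar\xi;\,s,\,2^{-1}(t+r),\,2^{-1}(t-r))\nn\eea
into the six Veronese relations and compare, term by term, with the six equations from Step 1 as rewritten via Step 2. For example, $x_1\star\bar x_1=\gl_2\gl_3$ becomes $\tfrac12{\tt u}\star\bar{\tt u}=\tfrac14(t^2-r^2)$, which is exactly the last equation in \eqref{Veronese_C}; and $x_1\star x_2=\gl_3\bar x_3$ becomes $\tfrac12{\tt u}\star\bar\eta=\tfrac{t-r}{2\sqrt2}\xi$, which matches ${\tt u}\eta=-\tfrac{1}{\sqrt2}(r-t)\xi$ after translating the Clifford action. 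Each of the remaining four pairings works out identically. Since every implication is a rearrangement of an equality, both directions ($\To$ and $\Ot$) are proved simultaneously.

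\textbf{Main obstacle.} The only non-routine point is bookkeeping in Step 2: the chirality labels of $\xi,\eta$ and the handedness conventions for the Clifford action determine where bars and conjugates land on the octonion side. One has to be sure that the dictionary of prop.\ref{prop_oct_mult_acatar} produces \emph{exactly} $\bar x_{i+2}$ (and not $x_{i+2}$ or $\bar x_{i+1}$) on the right-hand side of each bilinear Veronese identity; once this is fixed consistently, the matching is purely mechanical.
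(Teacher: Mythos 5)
Your plan is correct and is essentially the route the paper takes: the preceding proposition already records the Pl\"ucker relations in 8+2 form as \eqref{Veronese_C}, and the identification with the complexified Veronese conditions is then the same Clifford-to-octonion dictionary (prop.\ref{prop_oct_mult_acatar}) used verbatim in the real $F_4$ case, extended $\BB{C}$-linearly. The one point you flag as the main obstacle — where the bars land — is indeed the only place requiring care, and your sample checks (${\tt u}\star\bar{\tt u}={\tt u}\cdotp{\tt u}$ versus $\gl_2\gl_3$, and the bilinear ${\tt u}\star\bar\eta$ equation) are the right ones.
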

\begin{remark}
We remind the reader that for complexified octonions the bar operation is $\BB{C}$-linear and so $a\star \bar a\in\BB{C}$, while if bar should conjugate both factors $\BB{C}\otimes\BB{O}$, then $a\star \bar a\notin\BB{R}$ or $\BB{C}$ in general.

This has the consequence that the open sets $U_i=\{\gl_i\neq 0\}$ no longer cover the whole space: setting $\gl_1=\gl_2=\gl_3=0$ would give $x_i\star \bar x_i=0$, but this no longer forces all $x_i=0$. The authors of \cite{Corradetti:2022sot} suggested modifying the bar operation so that it conjugates both factors in $\BB{C}\otimes\BB{O}$. This does avoid the problem just mentioned, but it is not clear how to relate their construction with the coset $G/H$. Still, their construction is quite interesting.
\end{remark}

Rather, to interpret \eqref{mean_unclear}, we note that the trace $\Tr[J]$ is not $E_6$ invariant, but $F_4$ invariant. This hints at studying the decomposition of the $E_6$ homogeneous space as a collection of $F_4$-homogeneous spaces.

\subsection{Decomposition as $F_4$ homogeneous spaces}
Sumihiro proved in \cite{Sumihiro} the following result. Let $G$ be a connected linear algebraic group, and $X$ a normal quasi-projective variety $X$ with a $G$-action. Then $X$ has a $G$-stable covering $X=\cup_iU_i$, with $U_i$ quasi-projective and the $G$-action on $U_i$ is linear, that is, $G$ acts linearly on $\BB{P}^{n_i}$, into which $U_i$ embeds. Thus we can decompose $X$ into strata of subvarieties with a $G$-action on each stratum. This is the `equivariant completion'.

In \cite{Ahiezer}, Ahiezer classified the case with $X=X_0\cup X_{\infty}$ with $X_0$ an open $G$-orbit and $X_{\infty}$ of co-dimension 1, see table 2 loc. cit. We take an example from the same table to illustrate the idea of completion.
\begin{example}
Consider the subvariety $X_0=\{z_1^2+\cdots+z_n^2=1\}\subset\BB{A}^n$. Scaling up the $z_i$'s to `go to infinity' we get the equation $X_{\infty}=\{z_1^2+\cdots+z_n^2=0\}$. We compactify  $X_0$ by adding $X_{\infty}$: consider the subvariety $X=\{z_1^2+\cdots+z_n^2=z_0^2\}\subset \BB{P}^n$. Over the affine patch $z_0\neq 0$, we get $X_0$, intersecting $X$ with $\{z_0=0\}$ we get $X_{\infty}$. Note $X_0$ is the orbit under $O(n,\BB{C})$ of $(1,0,\cdots,0)$, while $X_{\infty}$ is the orbit of $(1,i,0,\cdots,0)$.
\end{example}
We are interested in the $G=F_4$ row of the cited table.
\begin{proposition}\label{prop_punch_line}
  The homogeneous space $X$ has a two strata decomposition $X=Y_0\cup Y_{\infty}$
  where $Y_0$ is an open $F_4$-orbit and is a complexification of $\BB{O}P^2$.
  The remaining stratum $Y_{\infty}$ has co-dimension 1.
\end{proposition}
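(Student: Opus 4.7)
The plan is to identify $Y_0$ as the $F_4^{\BB{C}}$-orbit through the highest-weight line $[\Psi_0]$ of \eqref{Psi_0_main}, and to use the $F_4$-invariant trace $\opn{Tr}[J(\Psi)]=s+t$ of thm.\ref{thm_star_diamond} to carve out a codimension-$1$ subvariety inside the complement $Y_{\infty}$.

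First, I would compute the complex stabilizer of $[\Psi_0]$. The discussion around \eqref{Psi_o}--\eqref{orbit_Psi_0}, combined with the decomposition $\Psi_0=\Psi_o+\sqrt3\,\Psi_{\emptyset}$ and the pointwise $F_4$-fixity of the singlet $\Psi_{\emptyset}$, already shows that the real $F_4$-stabilizer of the projective point $[\Psi_0]$ is $Spin(9)$. Since point-stabilizers for algebraic group actions are themselves algebraic, one obtains $\opn{Stab}_{F_4^{\BB{C}}}([\Psi_0])=Spin(9)^{\BB{C}}$, whence
\bea Y_0:=F_4^{\BB{C}}\cdot[\Psi_0]\simeq F_4^{\BB{C}}/Spin(9)^{\BB{C}}=:(\BB{O}P^2)_{\BB{C}},\nn\eea
a variety of complex dimension $52-36=16=\dim_{\BB{C}}X$. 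Since $X$ is irreducible, $Y_0$ is the unique open $F_4^{\BB{C}}$-orbit, and by construction it is the complexification of $\BB{O}P^2=F_4/Spin(9)$.

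For $Y_{\infty}:=X\setminus Y_0$, I would use the trace. The linear functional $\opn{Tr}[J(\Psi)]=s+t$ on ${\bf 27}$ is $F_4$-invariant by thm.\ref{thm_star_diamond}, so by $\BB{C}$-linear extension it is $F_4^{\BB{C}}$-invariant; its vanishing defines a hyperplane in $\BB{C}P^{26}$. Since $\opn{Tr}[J(\Psi_0)]=1$ and $\opn{Tr}$ is constant along $F_4^{\BB{C}}$-orbits, $Y_0\subset\{\opn{Tr}\neq 0\}$, so $\{\opn{Tr}=0\}\cap X\subseteq Y_{\infty}$. This hyperplane section is nonempty (e.g. the locus $X_{\infty}$ of sec.\ref{sec_StPR_2}, where $s=v=0$ in particular forces $t=0$, is contained inside) and of pure complex codimension $1$ in the irreducible $X$. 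Combined with $Y_{\infty}\subsetneq X$ (which follows from $[\Psi_0]\in Y_0$), this forces $\dim_{\BB{C}}Y_{\infty}=15$, i.e. codimension~$1$.

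The most delicate step is ruling out that $\opn{Stab}_{F_4^{\BB{C}}}([\Psi_0])$ could be strictly larger than $Spin(9)^{\BB{C}}$. An independent infinitesimal check is to verify that $\{\xi\in\FR{f}_4^{\BB{C}}:\xi\cdot\Psi_0\in\BB{C}\Psi_0\}$ has complex dimension $36$; this can be read off by combining the $\FR{f}_4$-branching ${\bf 27}={\bf 26}\oplus{\bf 1}$ with the further $\FR{spin}(9)$-branching ${\bf 26}={\bf 1}\oplus{\bf 9}\oplus{\bf 16}$. The latter isolates $\Psi_o$ (up to scalar) as the unique $\FR{spin}(9)$-singlet in ${\bf 26}$, and since $\Psi_{\emptyset}$ is $\FR{f}_4$-fixed, any $\xi$ preserving the projective line through $\Psi_0=\Psi_o+\sqrt3\,\Psi_{\emptyset}$ must annihilate each summand separately, forcing $\xi\in\FR{spin}(9)^{\BB{C}}$.
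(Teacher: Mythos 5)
Your proposal is correct, but it takes a genuinely different route from the paper. The paper characterises $Y_0$ through the defining equations: on the affine piece $\{s+t\neq0\}$ one rescales $J(\Psi)$ by $(s+t)^{-1}$ so that \eqref{mean_unclear} becomes the complexified idempotent system $P\star P=P$, $\Tr[P]=1$ of \eqref{PP=P}, i.e.\ literally the same equations that cut out $\BB{O}P^2$, now over $\BB{C}\otimes\BB{O}$; the boundary stratum is then the hyperplane section $\{s+t=0\}\cap X$. You instead work with the group action directly: orbit through $[\Psi_0]$, stabilizer computation, and a dimension count $52-36=16=\dim_{\BB{C}}X$ to get openness, using the invariant trace only to exhibit a codimension-$1$ piece inside the complement. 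Your route is arguably more rigorous on the point the paper merely asserts (that the open stratum \emph{is} a single orbit), at the cost of not showing that $Y_0$ coincides with the full affine piece $\{\Tr\neq0\}\cap X$ --- which the proposition does not require. Two small points to tighten: the sentence ``point-stabilizers for algebraic group actions are themselves algebraic, whence $\opn{Stab}_{F_4^{\BB{C}}}([\Psi_0])=Spin(9)^{\BB{C}}$'' is not a valid inference by itself (an algebraic subgroup is not determined by its compact real points); your closing infinitesimal check is what actually carries the argument, so it should be the main step rather than an afterthought. And within that check, the final ``forcing $\xi\in\FR{spin}(9)^{\BB{C}}$'' uses that the spinorial complement ${\bf 16}\subset\FR{f}_4^{\BB{C}}$ acts injectively on $\Psi_o$; this follows from Schur's lemma applied to the $\FR{spin}(9)$-equivariant map ${\bf 16}\to{\bf 26}$, $\xi\mapsto\xi\cdot\Psi_o$ (nonzero because the real orbit $F_4/Spin(9)$ already has dimension $16$), and is worth saying explicitly.
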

\noindent This result says that $X$ is the \emph{projective completion} of $(\BB{O}P^2)_{\BB{C}}$, which is our punch line \eqref{punch_line}.
\begin{proof}\footnote{I saw the idea of considering equivariant completion in a post in some maths forum, but as I was typing up the manuscript, I could no longer find that post, I apologise for not giving credit to the right person.}
Let $R$ be the 27 dimensional representation of $E_6$, whose projectivisation is $\BB{C}P^{26}$. Recall that $X$ is the $E_6$-orbit of $\Psi_0$ in $\BB{C}P^{26}$. To consider its decomposition as $F_4$-orbits, we decompose ${\bf 27}={\bf 26}\oplus{\bf 1}$. Here the component of $\Psi\in{\rm Orb}(\Psi_0)$ along ${\bf 1}$ will play the role of $z_0$ in the example above.

Returning to the equation \eqref{mean_unclear}, which is equivalent to $\Psi$ satisfying the Pl\"ucker relation.
The trace is an $F_4$-invariant, in fact, $s+t=\Tr[J(\Psi)]=\sqrt3\bra\Psi,\Psi_{\emptyset}\ket$, where $\bra,\ket$ is the $F_4$-invariant pairing and $\Psi_{\emptyset}$ is the generator of ${\bf 1}$ in ${\bf 26}\oplus{\bf 1}$.

In the open set where $s+t\neq 0$, we define
\bea P=\frac{1}{s+t}J,~~~P\star P=P,~~~\Tr[P]=1\nn\eea
which is formally \eqref{PP=P}, except the $P$ here is complexified compared to \eqref{PP=P}, i.e. we obtain a complexification
\bea \BB{O}P^2 \to (\BB{O}P^2)_{\BB{C}},\nn\eea
and the rhs is an open $F_4$-orbit.

The remaining stratum has $s+t=0$, i.e. $\Psi$ has zero component along ${\bf 1}$: it is the intersection of $X$ with $\BB{C}P^{25}$, which is the projectivisation of ${\bf 26}$.
\end{proof}

\section{Summary and Outlook}
This paper's main technical result is the Pl\"ucker coordinates that embed the space EIII in $\BB{C}P^{26}$. These Pl\"ucker coordinates satisfy some quadratic relations, in the same way as the classical Grassmannian case.

The central conceptual result is the precise sense in which EIII can be interpreted as the complex octonion projective plane: the naive complexification of $\BB{O}P^2$ needs completion by adding another $F_4$-orbit at infinity. This structural result is new, to the best of my knowledge.

The $E_6$ group has an important role in theoretical physics, e.g. the grand unified theories (GUT): its gauge group $E_6$ contains a possible GUT gauge group $SO(10)$. So when $E_6$ is (spontaneously) broken to this subgroup, the vev of the scalar fields responsible for the breaking would live in a manifold closely related to our quotient space $E_6/Spin(10)\times_{\BB{Z}_4}U(1)$. The geometry of this quotient directly dictates the low energy physics, so any means that gives us a better understanding of its geometry should be helpful in some way.

We thank the referee for pointing out further applications in supergravity.
Though we have only been considering the compact $E_6$, part of our results can be readily generalised to the non-compact form $E_{6(-14)}$ and the quotient symmetric space $E_{6(-14)}/Spin(10)\times_{\BB{Z}_4}U(1)$. This space appears as the (extended) scalar manifold of ${\cal N}=10$ supergravity in $D=2+1$ (see table 11 of \cite{Ferrara2008SymmetricSI}). Furthermore the same quotient appears as the moduli space of the `non-BPS $Z=0$’ critical points of the black hole effective
potential in the `magical exceptional' ${\cal N}=2$ Maxwell-Einstein (ungauged) supergravity \cite{GUNAYDIN198372} in
$D=3+1$. In this context, $E_{6(-14)}/Spin(10)\times_{\BB{Z}_4}U(1)$ is characterised
as a proper submanifold of the special K\"ahler symmetric vector multiplets’ scalar manifold
$E_{7(-25)}/E_6\times U(1)$ (see table 2 of \cite{Ferrara2008SymmetricSI}), which in turn is the Riemannian, totally non-compact form of the
Hermitian symmetric space EVII.

The referee also brought \cite{truini1982e6} to our attention. The authors of that paper defined a Jordan pair, i.e. a pair of $3\times 3$ complex hermitian octonion matrices with some idempotency consditions. There are three types of such idempotents, type I consists of pairs $(\Psi,\Phi)\in{\bf 27}\times{\bf 27}^*$, such that both $\Psi$ and $\Phi$ satisfy the Pl\"ucker relations and $\bra\Psi,\Phi\ket=1$.
Upon normalising $|\Psi|^2=|\Phi|^2=1$, then $\Psi=\Phi^*$, that is, one can also describe EIII as normalised the Jordan pairs of type I.

Take two normalised pairs of idempotents $x_i=(\Psi_i,\Phi_i)$, $i=1,2$. Then they define both points $p(x_i)$ and lines $\ell(x_i)$.
The points are just the line $[\Psi_i]$ (upon projectivisation) and so concurs with the description of the current paper. As for the lines, one says that $p(x_1)$ is incident to $\ell(x_2)$ iff $(\Psi_1\diamond\Psi_2)\diamond\Phi_2=\Psi_1$, where $\diamond$ is the product introduced in prop.\ref{prop_d_tensor}.

It was shown in \cite{truini1982e6} that one can have two points in certain configuration (called connected) such that more than one line can pass both points. Dually, two lines may also intersect at more than one point. Thus, despite the name octonion projective plane, the geometry does not satisfy the axioms of projective planes. The curious incidence/intersection relations also lead to novel quantum logic. The formulation in \cite{truini1982e6} was in terms of octonion matrices, while our Pl\"ucker coordinate (in particular, manifestly $E_6$ covariant) approach seems to simplify many calculations. We hope to explore this further in some future work.

\appendix

\section{Spinors and Exceptional Groups}
We follow the treatment of exceptional Lie algebras in \cite{Adams1996lectures}. This treatment relies on manipulations of Clifford algebra and spinors extensively, we encourage the reader to take a look at chapter 2,3,4 in \cite{Adams1996lectures}. Here we just gather the minimal amount of material for setting the notations.

\subsection{Clifford Algebra}\label{sec_CA}
The main reference here would be the classic work \cite{ATIYAH19643}.
Let $V=\BB{R}^m$ with $\{e^i,\;i=1,\cdots, m\}$ as its standard basis. The Clifford algebra is defined as the quotient of the tensor algebra $T(V)$
\bea Cl(V)=T(V)/J,~~~J=\bra v\otimes v-(v\cdotp v)1|v\in V\ket,\nn\eea
where $\cdotp$ is the standard inner-product of $V$.
We will omit the tensor symbol $\otimes$ in the following. Put simply, $Cl(V)$ consists of words generated by the alphabet $\{e^i\}$, with the relation $e^i e^j+e^je^i=2\gd^{ij}$.
Here our convention is slightly different from \cite{ATIYAH19643}, which has $e^i e^j+e^je^i=-2\gd^{ij}$, this entails a small modification of def.\ref{def_Spin} below.

In view of the fact that the relation reduces word length, we have the isomorphism of vector spaces
\bea Cl(V)\simeq \oplus_{0\leq p\leq m}\wedge^pV.\nn\eea
Since the relation $J$ preserves the tensor grading mod 2, we let $Cl_0(V)$ and $Cl_1(V)$ be the even and odd part of $Cl(V)$. Let $\ga:\,Cl(V)\to Cl(V)$ be the automorphism that adds a negative sign to $Cl_1$ but leaves $Cl_0$ untouched. We also define an anti-automorphism $()^t$, for $x=e^{i_1}\cdots e^{i_k}$ a monomial, set
\bea x^t=e^{i_k}\cdots e^{i_1}.\label{t_ope}\eea
Lastly let $Cl^{\times}(V)$ be the set of invertible elements.

Given $x\in Cl^{\times}(V)$, we have a twisted adjoint action
\bea \rho(x):\; Cl(V)\to Cl(V),~~~y\to \ga(x)yx^{-1}.\label{twisted_adj}\eea
Define the Clifford group $\Gc(V)$ as the set
\bea  \Gc(V)=\{x\in Cl^{\times}(V)|\rho(x)V\subset V\}.\nn\eea
Set also \bea N(x)=x\ga(x^t).\nn\eea
Prop.3.8 of \cite{ATIYAH19643} shows that if $x\in \Gc(V)$, then $N(x)\in\BB{R}^{\times}$.
\begin{definition}\label{def_Spin}
The Pin group is defined as
\bea Pin(V)=\{x\in \Gc(V)|N(x)=\pm1\}.\nn\eea
The Spin group is defined as $Spin(V)=Pin(V)\cap Cl_0(V)$. It is a double cover of $SO(V)$.
\end{definition}
\begin{remark}
  One can be even more specific in describing $Pin(V)$ or $Spin(V)$. The former consists of monomials of $x=x_1\cdots x_k$ with $x_i$ of unit length, and the latter are those monomials with $k$ even.
  Indeed, such monomials fulfil the definition above. On the other hand, the transformation $\ga(x)vx^{-1}=\ga(x)vx^t$ is given by reflecting $v$ across the plane perpendicular to $x_i$. It is well-known that any $O(V)$ can be written as a succession of such reflections.
\end{remark}

We will only deal with $V$ of even dimension, so we let $m=2n$.
\begin{example}\label{ex_Gd_and_gamma}(Compare with table 1 in \cite{ATIYAH19643})

The Dirac representation $\Gd$ of the Clifford algebra has dimension $2^n$. If we set $\Gd=\BB{C}^2\otimes \cdots\otimes \BB{C}^2$, then the generators $\{e^i\}$ of the Clifford algebra have the matrix representation
  \bea &&e_1=\gs_1\otimes 1\otimes \cdots \otimes 1,\nn\\
  &&e_2=\gs_2\otimes 1\otimes \cdots \otimes 1,\nn\\
  &&e_3=\gs_3\otimes \gs_1\otimes 1\otimes \cdots \otimes 1,\nn\\
  &&e_4=\gs_3\otimes \gs_2\otimes 1\otimes \cdots \otimes 1,\nn\\
  &&\cdots \nn\\
  &&e_{2n-1}=\gs_3\otimes \cdots\otimes \gs_3 \otimes \gs_1,\nn\\
  &&e_{2n}=\gs_3\otimes \cdots\otimes \gs_3 \otimes \gs_2,\label{gamma_explicit}\eea
  where there are $n$ tensor factors, and $\gs_{1,2,3}$ are the Pauli matrices
  \bea \gs_1=
  \begin{bmatrix}
    0 & 1 \\ 1 & 0
  \end{bmatrix},~~\gs_2=
  \begin{bmatrix}
    0 & -i \\ i & 0
  \end{bmatrix},~~\gs_3=
  \begin{bmatrix}
    1 & 0 \\ 0 & -1
  \end{bmatrix}.\nn\eea
\end{example}
But $\Gd$ is reducible as a representation of $Spin(V)$. One can decompose it into $\Gd=\Gd^+\oplus \Gd^-$ of dimension $2^{n-1}$ each.
In physics literature, $\Gd^{\pm}$ are denoted as right/left handed spinors, and are characterised by their eigen-value under the `chirality operator'
\bea \gc=(-i)^ne_1e_2\cdots e_{2n},~~~\gc^2=1,~~\gc\Gd^{\pm}=\pm\Gd^{\pm}.\label{chirality}\eea
The choice of $(-i)^n$ will become clear later. The important property of $\gc$ is
\bea \{\gc,e^i\}=0\nn\eea
thus an odd number of $e^i$ would flip the chirality, however $\FR{spin}(V)$ has the form $e^ie^j$ and so preserves the chirality.

The conjugate (or dual) representation of $\Gd^{\pm}$ are not new ones, but rather
\bea (\Gd^{\pm})^*\simeq \Gd^{\pm},~~n~\opn{even},\nn\\
(\Gd^{\pm})^*\simeq \Gd^{\mp},~~n~\opn{odd},\label{charge_conj}\eea
see prop.4.3 in \cite{Adams1996lectures}. For us, it is important to know the isomorphism above explicitly. We define the `charge conjugation' matrices $C$, according to the physics terminology
\bea n~\opn{even}:&&C=e_2e_4\cdots e_{2n},\nn\\
n~\opn{odd}:&&C=(-1)^{\frac{n-1}{2}}e_1e_3\cdots e_{2n-1}.\nn\eea
The $C$ matrices are chosen so that
\bea C^{-1}x^TC=x^t.\label{t_T}\eea
The notation here is:
we have identified $x\in Cl(V)$ with the matrix representing it, so the left $T$ is the matrix transposition, while the right $t$ is the anti-automorphism \eqref{t_ope}.

Note that all the $e^i$'s are represented as Hermitian matrices and so transposition is the same as conjugation. So we can define the charge conjugation that explicitly realises the isomorphism \eqref{charge_conj}
\bea n~\opn{even}:&&\Gd^{\pm}\ni  \phi \to C\phi^*\in \Gd^{\pm}\label{duality_even}\\
n~\opn{odd}:&&\Gd^{\pm}\ni  \phi \to C\phi^*\in \Gd^{\mp}\label{duality_odd}.\eea
where $*$ is the complex conjugation. In particular for $\dim=4k+2$, the charge conjugation flips chirality, while for $\dim=4k$, the chirality is preserved. Thus one may well ask if one can impose the reality for $\Gd^+$ and $\Gd^-$, i.e. if there exists $\psi\in\Gd^+$ with $C\psi^*=\psi$. This turns out only possible if $\dim=8k$.
For this work 8D spinors play a lead role and the fact that we can have real left/right handers of real dimension 8, and hence identical to the dimension of the vector representation of $SO(8)$, is crucial. This is known as the \emph{triality} and responsible for the octonion multiplication rules.

As a vector space $\FR{spin}(V)=\FR{so}(V)$ is generated abstractly by $\{x^{pq}|x^{pq}=-x^{qp}\}$, with the Lie bracket
\bea[x^{pq},x^{rs}]=\gd^{qr}x^{ps}+\textrm{anti-sym in}\;[pq][rs].\nn\eea
The following definition shows how the $\FR{spin}$ action covers the $\FR{so}$ action.
\begin{definition}\label{def_spin_action}
  The action of the Lie algebra $x^{pq}=e^p\wedge e^q\in\FR{spin}(V)$ on $\Gd$ or $\Gd^{\pm}$ is via the Clifford multiplication
  \bea x^{pq}\circ\psi=\frac14e^{[p}e^{q]}\psi.\nn\eea
  On $V$, we deduce from \eqref{twisted_adj} that $x^{pq}$ acts as the commutator of Clifford algebra
  \bea x^{pq}\circ v=[\frac14e^{[p}e^{q]},v].\nn\eea
\end{definition}

We use $\bra\;,\;\ket$ to denote the Hermitian inner-product for $\Gd^+$ and $\Gd^-$, i.e. $\bra\psi,\phi\ket=\psi^{\dag}\phi$ for $\psi,\phi$ of the same chirality. The dualities \eqref{duality_even}, \eqref{duality_odd} provides another non-degenerate bi-linear pairing
\bea n~\opn{even}:&&\Gd^{\pm}\otimes \Gd^{\pm} \ni  \phi\otimes \psi  \to (\phi,\psi):=\phi^TC\psi\label{pairing_even}\\
n~\opn{odd}:&&\Gd^{\pm}\otimes \Gd^{\mp} \ni \phi\otimes \psi  \to (\phi,\psi):=\phi^TC\psi.\label{pairing_odd}\eea
We will often omit the commas and write $(\phi\psi)$ or $\bra\phi\psi\ket$ instead.

\subsection{Tensor Product of Spinors and Fierz Identities}
The tensor product of two spin representations can be decomposed into direct sums of tensor representations of $\FR{spin}(V)$, see theorem 4.6 \cite{Adams1996lectures}. Here we list some decompositions relevant for us
\bea \dim=16:&&\Gd^{\pm}\otimes\Gd^{\pm}={\color{blue}\wedge^8_{\pm}V^*}\oplus {\color{red}\wedge^6V^*}\oplus {\color{blue}\wedge^4V^*}\oplus {\color{red}\wedge^2V^*}\oplus{\color{blue}\bf 1},\nn\\
&&\Gd^-\otimes\Gd^+={\color{black}\wedge^7V^*}\oplus {\color{black}\wedge^5V^*}\oplus {\color{black}\wedge^3V^*}\oplus {\color{black}V^*}\nn\\
\dim=8:&&\Gd^{\pm}\otimes\Gd^{\pm}={\color{blue}\wedge^4_{\pm}V^*}\oplus {\color{red}\wedge^2V^*}\oplus {\color{blue}\bf 1},\label{decomp_8_ev}\\
&&\Gd^-\otimes\Gd^+={\color{black}\wedge^3V^*}\oplus {\color{black}V^*},\nn\\
\dim=10:&&\Gd^-\otimes\Gd^+={\color{black}\wedge^4V^*}\oplus {\color{black}\wedge^2V^*}\oplus {\color{black}\bf 1},\nn\\
&&\Gd^{\pm}\otimes\Gd^{\pm}={\color{blue}\wedge^5_{\mp i}V^*}\oplus {\color{red}\wedge^3V^*}\oplus {\color{blue}\wedge^1V^*},\nn\eea
where the blue summands are symmetric under the exchange of the two factors on the lhs, while the red ones are anti-symmetric.
\begin{remark}
Explicitly, the mappings above are given as follows. Let, say $\dim=16$, $\phi\in\Gd^-$ and $\psi\in \Gd^+$, then
\bea e^{i_1}\wedge \cdots \wedge e^{i_k} \mapsto (\phi,e^{i_1}\cdots e^{i_k}\psi)\label{spin_bi_linear}\eea
gives the map $\phi\otimes\psi\to \wedge^kV^*$ (for $k$ odd).
While for $\dim 10$, since $C$ now involves odd number of $e^i$, thus the same map above would be for $k$ even instead.

We will not go into the proof of the decompositions \eqref{decomp_8_ev}. But the logic goes as: the rhs of \eqref{decomp_8_ev} are all irreducible representations of $\FR{spin}(V)$. Thus if the map \eqref{spin_bi_linear} to a particular summand is nonzero, it will be onto. To show that the map is nonzero, one chooses special $\phi,\psi$ to produce a nonzero image. Having shown that the maps are all onto, then a simple count of dimension shows that the map is isomorphic.
\end{remark}
\smallskip

The rhs of \eqref{spin_bi_linear} are usually called \emph{spinor bilinears}.
The \emph{Fierz identities} are some seemingly magical identities involving the spinor bilinears, responsible for some miraculous cancellations that make certain supersymmetric field theories free of ultra-violet divergence.
But magic does not exist: the Fierz identities are but some manifestations of representation theory of $\FR{spin}(V)$, which we review next.

To start, take $\dim=8$ and let $\psi_{1,2,3,4}\in\Gd^+$. We can decompose the quadruple tensor product $\psi_1\otimes \psi_2\otimes\psi_3\otimes\psi_4$ into tensor representations. In particular, we are interested in the trivial representation in this decomposition. But there are two ways to do this: we decompose $\psi_1\otimes\psi_2$ and $\psi_3\otimes\psi_4$ as in \eqref{decomp_8_ev}
\bea (\Gd^+\otimes\Gd^+)\tensor(\Gd^+\otimes\Gd^+)=(\wedge^4_{\pm}V^*\oplus\wedge^2V^*\oplus{\bf 1})
\tensor(\wedge^4_{\pm}V^*\oplus\wedge^2V^*\oplus{\bf 1})\mapsto {\bf 1}\oplus {\bf 1}\oplus {\bf 1},\nn\\
(\psi_1\otimes\psi_2)\tensor(\psi_3\otimes\psi_4)\mapsto\frac{1}{4!}(\psi_1e^{ijkl}\psi_2)(\psi_3e^{ijkl}\psi_4)
\oplus\frac{1}{2!}(\psi_1e_{ij}\psi_2)(\psi_3e^{ij}\psi_4)\oplus(\psi_1\psi_2)(\psi_3\psi_4)\label{used_fierz_8},\eea
where $e^{i_1\cdots i_p}=e^{[i_1}\cdots e^{i_p]}/p!$.

Alternatively, we decompose $\psi_1\otimes\psi_4$ and $\psi_3\otimes\psi_2$ first, giving
\bea (\psi_1\otimes\psi_4)\tensor(\psi_3\otimes\psi_2)\to \frac{1}{4!}(\psi_1e^{ijkl}\psi_4)(\psi_3e^{ijkl}\psi_2)
\oplus\frac{1}{2!}(\psi_1e^{ij}\psi_4)(\psi_3e^{ij}\psi_2)\oplus(\psi_1\psi_4)(\psi_3\psi_2)\label{used_fierz_8T}.\eea
The tensor decomposition does not depend how one does it, and so the three terms on the rhs must be a linear combination of the three terms in \eqref{used_fierz_8}, e.g.
\bea (\psi_1\psi_4)(\psi_3\psi_2)=\frac{a}{4!}(\psi_1e^{ijkl}\psi_2)(\psi_3e^{ijkl}\psi_4)
+\frac{b}{2!}(\psi_1e^{ij}\psi_2)(\psi_3e^{ij}\psi_4)+c(\psi_1\psi_2)(\psi_3\psi_4)\nn.\eea
Note the coefficients $a,b,c$ do not depend on $\psi_i$, and so we can get these by plugging in specific $\psi_i$'s as follows. Let $\xi^a$ be an orthonormal basis of $\Gd^+$, and $\xi_a$ the dual basis such that $(\xi_b,\xi^b)=\gd_a^b$. We let $\psi_3=\xi_a$, $\psi_4=\xi^a$ and we sum over $a$.
Now since $\xi_a\otimes\xi^a$ is in the trivial representation, \eqref{used_fierz_8} gives
\bea (\psi_1\otimes\psi_2)\tensor(\xi_a\otimes\xi^a)\to0\oplus 0\oplus (\psi_1\psi_2)(\xi_a\xi^a)=8(\psi_1\psi_2)[0,0,1]\nn.\eea
But $(\psi_1\xi^a)(\xi_a\psi_2)=(\psi_1\psi_2)$, giving $c=1/8$. Similarly, set $\psi_3=\xi_a$, $\psi_4=e^{12}\xi^a$, we note that $\xi_a\otimes e^{12}\xi^a$ lives in $\wedge^2V^*$, and so \eqref{used_fierz_8} gives
\bea (\psi_1\otimes\psi_2)\tensor(\xi_a\otimes e^{12}\xi^a)\to 0\oplus \frac{1}{2!}(\psi_1e^{ij}\psi_2)(\xi_ae^{ij}e^{12}\xi^a)\oplus 0=-8(\psi_1e^{12}\psi_2)[0,1,0]\nn.\eea
Here in evaluating $T^{ij,kl}=(\xi_ae^{ij}e^{kl}\xi^a)$, we note that from the action of $\FR{spin}$ in def.\ref{def_spin_action}, it is $\FR{spin}(8)$ invariant. By some basic invariant theory, it must be proportional to $\gd^{ik}\gd^{jl}-\gd^{il}\gd^{jk}$. One can easily get the proportionality constant to be $-8$. On the other hand $(\psi_1e^{12}\xi^a)(\xi_a\psi_2)=(\psi_1e^{12}\psi_2)$, and we get $b=-1/8$.

The coefficient $c$ is the only one with a slight subtlety. We proceed as above, and set $\psi_3=\xi_a$ and $\psi_4=e^{1234}\xi^a$, \eqref{used_fierz_8} gives
\bea (\psi_1\otimes\psi_2)\tensor(\xi_a\otimes e^{1234}\xi^a)\to 0\oplus \frac{1}{4!}(\psi_1e^{ijkl}\psi_2)(\xi_ae^{ijkl}e^{1234}\xi^a)\oplus 0=16(\psi_1e^{1234}\psi_2)[0,1,0]\nn.\eea
But now in evaluating $T^{ijkl;pqrs}=(\xi_ae^{ijkl}e^{pqrs}\xi^a)$, the invariant theory gives us two invariant tensors $\gd^{ip}\gd^{jq}\gd^{kr}\gd^{ls}+\textrm{anti-sym}[pqrs]$ and $\ep^{ijklpqrs}$, with equal coefficient (since the rank 4 tensor is self-dual, see \eqref{decomp_8_ev}). On the other hand $(\psi_1e^{1234}\xi^a)(\xi_a\psi_2)=(\psi_1e^{1234}\psi_2)$, and we get $c=1/16$. Summarising
\bea (\psi_1\psi_2)(\psi_3\psi_4)=\frac18(\psi_1\psi_4)(\psi_3\psi_2)-\frac18\frac{1}{2!}(\psi_1e^{ij}\psi_4)(\psi_3e^{ij}\psi_2)+\frac1{16}\frac{1}{4!}(\psi_1e^{ijkl}\psi_4)(\psi_3e^{ijkl}\psi_2).\nn\eea

Repeating the same argument, we get other similar relations, but as the calculation is hardly instructive, we will just state the result with no proof.
To streamline notations, denote
\bea A_k=\frac{1}{k!}(\psi_1e^{i_1\cdots i_k}\psi_2)(\psi_3e^{i_1\cdots i_k}\psi_4),\nn\\
A^T_k=\frac{1}{k!}(\psi_1e^{i_1\cdots i_k}\psi_4)(\psi_3e^{i_1\cdots i_k}\psi_2)\nn\eea
for $\psi_2,\psi_4$ of the same chirality, while if they are of opposite chirality, we denote the same quantity as $B_k$ or $B_k^T$. We have
\bea D=8~~~\begin{array}{c|c|c|c}
      & A^T_0 & A^T_2 & A^T_4 \\
      \hline
      A_0 & \frac18 & -\frac18 & \frac{1}{16} \\
      \hline
      A_2 & -\frac72 & \frac12 & \frac14 \\
      \hline
      A_4 & \frac{35}{4} & \frac54 & \frac{3}{8}
     \end{array}~~~\begin{array}{c|c|c}
      & A^T_1 & A^T_3 \\
      \hline
      A_1 & -\frac34 & \frac14  \\
      \hline
      A_3 & \frac74 & \frac34 \\
     \end{array}~~~\begin{array}{c|c|c}
      & B^T_1 & B^T_3 \\
      \hline
      B_0 & \frac18 & -\frac18 \\
      \hline
      B_2 & -\frac74 & -\frac14 \\
     \end{array}~~~\begin{array}{c|c|c}
      & B^T_0 & B^T_2 \\
      \hline
      B_1 & 1 & -\frac12 \\
      \hline
      B_3 & -7 & -\frac12 \\
     \end{array}\label{fierz_8}.\eea
Note that in $\dim 8$, $B_4=0$ always.
To read this table, we take $A_4$, then the third row says $A_4=35/4A_0^T+5/4A_2^T+3/8A_4^T$.
It is helpful to check that applying the above relations twice, one does get back to square one.

From these relations, we can derive some handier ones. For example,
\begin{lemma}In 8D, assume that $\xi,\phi,\psi\in \Gd^+$ and $\eta\in\Gd^-$, then
 \bea \frac12 e^{ij}\eta (\phi e^{ij}\psi)=4\phi (\psi\xi)-4\psi (\phi\xi)\label{Fierz_8_I}\\
 2\eta (\phi\psi)=e^i\psi (\phi e^i\eta)+e^i\phi (\psi e^i\eta).\label{Fierz_8_II}\eea
 we remind the reader that $(\phi e^i\psi):=\phi^TC e^i\psi$.
\end{lemma}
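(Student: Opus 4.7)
The plan is to reduce each spinor-valued identity to a scalar identity by pairing both sides with an auxiliary spinor of matching chirality, and then to apply the Fierz rearrangement tables \eqref{fierz_8}. This is the standard method: any spinor equation $L=R$ with $L,R\in\Gd^{\pm}$ is equivalent to $(\chi,L)=(\chi,R)$ for all auxiliary $\chi$ in the appropriate spinor space, and for the particular $L,R$ appearing here each contraction is a four-spinor scalar bilinear of exactly the form $\frac{1}{k!}(\ast\, e^{i_1\cdots i_k}\ast)(\ast\, e^{i_1\cdots i_k}\ast)$ tabulated in \eqref{fierz_8}.

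For \eqref{Fierz_8_II}: both sides lie in $\Gd^-$, so pair with an auxiliary $\chi\in\Gd^-$. The LHS becomes $2(\chi\eta)(\phi\psi)$, of type $B_0$ in the labeling where the two $\Gd^-$ spinors sit in positions $1,2$ and the two $\Gd^+$ spinors in positions $3,4$. The RHS becomes $(\chi e^i\psi)(\phi e^i\eta)+(\chi e^i\phi)(\psi e^i\eta)$, a symmetrization in $\phi\leftrightarrow\psi$ of a single $B_1^T$-type contraction. Reading off the row $B_1=B_0^T-\tfrac12 B_2^T$ of the last table in \eqref{fierz_8} and then symmetrizing, the two $B_2^T$ contributions cancel by the antisymmetry of $(\chi e^{ijk}\phi)$ under $\phi\leftrightarrow\psi$ (which follows from the $C$-transpose rule \eqref{t_T} together with the symmetry of $C$ in 8D), while the $B_0^T$ terms add to yield $2(\chi\eta)(\phi\psi)$, matching the LHS.

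For \eqref{Fierz_8_I}: as displayed the chiralities do not match (the LHS lies in $\Gd^-$ but the RHS in $\Gd^+$, since $(\psi\xi)$ and $(\phi\xi)$ are $\Gd^+\otimes\Gd^+$ pairings), so the intended statement must have the $\eta$ on the LHS replaced by $\xi$, so that both sides lie in $\Gd^+$. Pair with an auxiliary $\chi\in\Gd^+$: the LHS becomes the quantity $A_2$ in the notation of the first $A$-table, while the RHS becomes $4[(\chi\phi)(\psi\xi)-(\chi\psi)(\phi\xi)]$, i.e.\ a difference of the $A_0^T$ pairing and the third ``swap'' pairing $A_0^S:=(\psi_1\psi_3)(\psi_2\psi_4)$. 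One expresses $A_0^S$ in terms of $A_0,A_0^T$ by applying the $A_0$-row of the table with the labels $2\leftrightarrow 3$ permuted, and substitutes it together with the expansion $A_2=-\tfrac72 A_0^T+\tfrac12 A_2^T+\tfrac14 A_4^T$; the higher-rank contributions cancel and one is left exactly with $4(A_0^S-A_0^T)$.

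The main obstacle is purely combinatorial: keeping track of the sign in $(\psi_1 e^{I_k}\psi_2)=\epsilon_k(\psi_2 e^{I_k}\psi_1)$ under spinor exchange, which alternates with the rank $k$ by \eqref{t_T} and the symmetry of the 8D charge conjugation $C$. Getting these signs right is what makes the higher-rank Fierz contributions cancel and leaves the clean right-hand sides in both \eqref{Fierz_8_I} and \eqref{Fierz_8_II}; the identities themselves follow from a single row of the appropriate Fierz table together with one symmetrization.
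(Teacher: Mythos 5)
Your proposal is correct and follows essentially the same route as the paper: reduce each spinor identity to a scalar four-spinor identity by pairing with an auxiliary spinor, then apply the Fierz tables \eqref{fierz_8} together with the exchange (anti)symmetry of the rank-$k$ bilinears dictated by \eqref{t_T}. Your diagnosis of the chirality mismatch in \eqref{Fierz_8_I} is right (the $\eta$ on the left-hand side should be $\xi$, making both sides live in $\Gd^+$, consistent with the paper's use of the all-$\Gd^+$ table), and the computation $A_2=4(A_0^S-A_0^T)$ checks out exactly as you describe; the only slip is cosmetic --- the cancellation in \eqref{Fierz_8_II} comes from the antisymmetry of the rank-two bilinear $(\phi e^{ij}\psi)$ under $\phi\leftrightarrow\psi$, not of a rank-three one.
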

\begin{proof}
 For the first relation, we eliminate $A_4^T$ in the first table in \eqref{fierz_8} by using the fact that $(\psi e^{ij}\phi)=-(\phi e^{ij}\psi)$ but $(\psi e^{ijkl}\phi)=(\phi e^{ijkl}\psi)$. For the second equality, one uses the third table and eliminate $B_3^T$ instead.
\end{proof}

We will also need the Fierz relations for $\dim=16$. Keeping the same notations we have
\bea D=16,~~~\begin{array}{c|c|c|c|c|c}
          & A_0^T & A_2^T & A_4^T & A_6^T & A_8^T \\
          \hline
      A_0 & \frac{1}{128} & -\frac{1}{128} & \frac{1}{128} & -\frac{1}{128} & \frac{1}{256} \\
      \hline
      A_2 & -\frac{15}{16} & \frac12 & -\frac{3}{16} & 0 & \frac{1}{32} \\
      \hline
      A_4 & \frac{455}{32} & -\frac{91}{32} & -\frac{9}{32} & \frac{5}{32} & \frac{7}{64} \\
      \hline
      A_6 & -\frac{1001}{16} & 0 & \frac{11}{16} & \frac12 & \frac{7}{32} \\
      \hline
      A_8 & \frac{6435}{64} & \frac{429}{64} & \frac{99}{64} &  \frac{45}{64} & \frac{35}{128}
     \end{array}\nn\eea
One can likewise check that these relations square to 1.
From these relations we can derive a crucial relation
\begin{lemma}
 For $\dim 16$, let $\psi_{1,2,3}\in\Gd^+$, then
 \bea \frac12 e^{ij}\psi_1 (\psi_2e^{ij}\psi_3)+\textrm{cyc perm}(1,2,3)=0.\label{Jacobi_E8}
 \eea
\end{lemma}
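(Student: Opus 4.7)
The plan is first to pair the vector identity with an arbitrary $\psi_0 \in \Gd^+$ using the $Spin(16)$-invariant symmetric bilinear on $\Gd^+$, which exists because the $\dim=16$ decomposition of $\Gd^+\otimes\Gd^+$ listed above contains a blue (symmetric) trivial summand $\bf 1$. This reduces the lemma to the scalar claim that
\[
X(\psi_0,\psi_1,\psi_2,\psi_3) \;:=\; \sum_{\mathrm{cyc}(1,2,3)} (\psi_0\, e^{ij}\psi_1)(\psi_2\, e^{ij}\psi_3) \;=\; 0
\]
for all $\psi_0,\dots,\psi_3\in\Gd^+$. A preliminary observation is that $X$ is totally antisymmetric in its four arguments: writing $A_{ab}:=(\psi_a\, e^{ij}\psi_b)$, which is antisymmetric in $a,b$ (the red $\wedge^2V^{*}$ summand), the sum takes the Pfaffian-like form $X = A_{01}A_{23}-A_{02}A_{13}+A_{03}A_{12}$, whose total antisymmetry in $(\psi_0,\psi_1,\psi_2,\psi_3)$ is manifest.

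The core step is to invoke the dim 16 Fierz relation, i.e.\ the second row of the $D=16$ table,
\[
A_2(\psi_0,\psi_1;\psi_2,\psi_3) \;=\; -\tfrac{15}{16}\,A_0^T + \tfrac12\,A_2^T - \tfrac{3}{16}\,A_4^T + \tfrac{1}{32}\,A_8^T
\]
(the $A_6^T$ coefficient vanishes), and to sum over cyclic permutations of $(\psi_1,\psi_2,\psi_3)$. The LHS produces $X/2$. Using the antisymmetry of $(\psi\, e^{ij}\psi')$ to trade each $A_2^T$ for another cyclic partner, the $\tfrac12 A_2^T$-contribution collapses to $-X/4$, so the net $A_2$-contribution is $3X/4$. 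The remaining summands $A_0^T, A_4^T, A_8^T$ involve the bilinears $(\psi\,\psi')$, $(\psi\, e^{ijkl}\psi')$, and $(\psi\, e^{i_1\cdots i_8}\psi')$, which all lie in blue (symmetric) summands. Consequently each cyclic sum $S_{2k}$ depends only on the three unordered partitions of $\{0,1,2,3\}$ into two unordered pairs, and is hence totally symmetric in all four arguments. The identity that results,
\[
\tfrac{3}{4}\,X \;=\; -\tfrac{15}{16}\,S_0 - \tfrac{3}{16}\,S_4 + \tfrac{1}{32}\,S_8,
\]
equates a totally antisymmetric $4$-linear form with a totally symmetric one.

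To conclude, apply the total antisymmetrizer $\mathrm{Alt}_4$ to both sides: it preserves the LHS while annihilating the RHS (any totally symmetric $4$-linear form vanishes under $\mathrm{Alt}_4$ since $\sum_{\sigma\in S_4}\mathrm{sgn}(\sigma)=0$). This yields $\tfrac{3}{4}X=0$, hence $X=0$, proving the lemma. The chief technical obstacle will be the careful sign-bookkeeping needed to identify $\sum_{\mathrm{cyc}} A_2^T$ with $-X/2$ and to verify the total symmetry of each $S_{2k}$; both rely on the red/blue (antisymmetric/symmetric) parity assignments in the $\dim=16$ decomposition of $\Gd^+\otimes\Gd^+$ and a careful tracking of which pair of arguments is swapped when cycling. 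Once those are set up, the antisymmetrization argument closes the proof in one line.
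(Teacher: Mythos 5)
Your proposal is correct and follows essentially the same route as the paper: pair against a fourth spinor, apply the second row of the $D=16$ Fierz table (whose $A_6^T$ coefficient vanishes, which is the crucial point), and exploit the antisymmetry of $(\psi e^{ij}\psi')$ versus the symmetry of the $k=0,4,8$ bilinears to make the cyclic sum vanish. Your write-up merely makes explicit the Pfaffian-form bookkeeping and the final antisymmetrization that the paper's one-line proof leaves implicit.
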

\begin{proof}
 One uses the table above to write $A_2$ in terms of $A_{0,2,4,8}$, noting the fact that $(\psi_1e^{ij}\psi_2)=-(\psi_2e^{ij}\psi_1)$, but $(\psi_1e^{i_1\cdots i_k}\psi_2)=(\psi_2e^{i_1\cdots i_k}\psi_1)$ for $k=0,4,8$. It is crucial that $A_6^T$ does not appear here.
\end{proof}

Finally we record the 10D Fierz relations
\bea D=10~~\begin{array}{c|c|c|c}
          & A_0^T & A_2^T & A_4^T \\
          \hline
      A_0 & \frac{1}{16} & -\frac{1}{16} & \frac{1}{16} \\
      \hline
      A_2 & -\frac{45}{16} & \frac{13}{16} & \frac{3}{16} \\
      \hline
      A_4 & \frac{105}{8} & \frac{7}{8} & \frac{1}{8} \\
     \end{array}~~\begin{array}{c|c|c}
          & A_1^T & A_3^T \\
          \hline
      A_1 & -\frac{1}{2} & \frac{1}{4} \\
      \hline
      A_3 & 3 & \frac{1}{2} \\
          \end{array}~~
          \begin{array}{c|c|c|c}
          & B_1^T & B_3^T & B_5^T \\
          \hline
      B_0 & \frac{1}{16} & -\frac{1}{16} & \frac{1}{32} \\
      \hline
      B_2 & -\frac{27}{16} & \frac{3}{16} & \frac{5}{32} \\
      \hline
      B_4 & \frac{21}{8} & \frac{7}{8} & \frac{5}{16} \\
     \end{array}~~\begin{array}{c|c|c|c}
          & B_0^T & B_2^T & B_4^T \\
          \hline
      B_1 & \frac{5}{8} & -\frac{3}{8} & \frac{1}{8} \\
      \hline
      B_3 & -\frac{15}{2} & \frac{1}{2} & \frac{1}{2} \\
      \hline
      B_5 & \frac{63}{4} & \frac{7}{4} & \frac{3}{4} \\
     \end{array}\nn\eea
From these we also derive two important relations
\begin{lemma}
 For $\dim 10$, let $\psi_{1,2,3}\in\Gd^+$ and $\eta\in\Gd^-$, then
 \bea \psi_1 (\eta\psi_2)+\frac12 e^{ij}\psi_1 (\eta e^{ij}\psi_2)-2e^i \eta(\psi_1 e^i\psi_2)=-4\psi_2(\psi_1\eta)\label{Fierz_10_key}\eea
 and the same relation holds if one flips all chiralities above.
 Furthermore
 \bea e^i \psi_1(\psi_2 e^i\psi_3)+\textrm{cyc perm}(1,2,3)=0.\label{Jacobi_E6}\eea
\end{lemma}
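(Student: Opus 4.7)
The plan is to prove both relations by the same strategy used earlier for the 8D and 16D Fierz identities: pair the spinor-valued equation against an arbitrary auxiliary spinor of the appropriate chirality to reduce it to a scalar identity in four spinors, then read the result directly off the 10D Fierz tables.

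For \eqref{Fierz_10_key}, the equation is valued in $\Gd^+$, so I will contract both sides with an arbitrary $\phi\in\Gd^-$ using the bilinear pairing, obtaining the scalar identity
\[
(\phi\psi_1)(\eta\psi_2)+\tfrac12(\phi e^{ij}\psi_1)(\eta e^{ij}\psi_2)-2(\phi e^i\eta)(\psi_1 e^i\psi_2)=-4(\phi\psi_2)(\psi_1\eta).
\]
With the labelling $\chi_1=\phi,\chi_2=\psi_1,\chi_3=\eta,\chi_4=\psi_2$, the first two terms on the LHS are $A_0$ and $A_2$ in the first 10D $A$-table (both $(\chi_1,\chi_2)$ and $(\chi_3,\chi_4)$ are opposite-chirality pairings, hence even $|I|$), whereas the third term, after the alternative grouping of $\phi$ with $\eta$ and $\psi_1$ with $\psi_2$, is a $B_1$-type expression. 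I will expand $A_0,A_2$ via the first $A$-table and rewrite $B_1$ via the corresponding $B$-table, collect everything in the common basis $A_0^T,A_2^T,A_4^T$, and check that the $A_2^T$ and $A_4^T$ contributions cancel while $A_0^T=(\phi\psi_2)(\psi_1\eta)$ picks up coefficient $-4$. The chirality-flipped version follows by the same argument with $\Gd^\pm$ exchanged.

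For \eqref{Jacobi_E6}, my proof will be a verbatim 10D analog of the $E_8$ derivation of \eqref{Jacobi_E8}. The equation is valued in $\Gd^-$; pairing with an arbitrary $\phi\in\Gd^+$ reduces it to
\[
(\phi e^i\psi_1)(\psi_2 e^i\psi_3)+\textrm{cyc perm}(1,2,3)=0.
\]
All four spinors now lie in $\Gd^+$, so I will use the second 10D $A$-table, which relates $A_1,A_3$ to $A_1^T,A_3^T$. Applying $A_1=-\tfrac12 A_1^T+\tfrac14 A_3^T$ to each cyclic term and invoking the antisymmetries $(\psi e^i\psi')=-(\psi' e^i\psi)$ and $(\psi e^{ijk}\psi')=-(\psi' e^{ijk}\psi)$ (coming from the red summands of $\Gd^+\otimes\Gd^+$ in the 10D decomposition), the cyclic sum telescopes to zero. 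Just as $A_6^T$ was absent in the $E_8$ case, the critical feature here is that $A_5^T$ does not appear in the relevant row of the 10D table, so the cancellation closes without leftover terms.

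The main obstacle will be sign bookkeeping: tracking how $C$ commutes past each $e^{i_1\cdots i_k}$, the chirality-dependent symmetry or antisymmetry of the bilinears, and confirming that the numerical coefficients assembled from the Fierz tables really do land on the precise prefactors $-4,-2,\tfrac12$ in \eqref{Fierz_10_key} and on equal coefficients for each summand of the cyclic sum in \eqref{Jacobi_E6}. Beyond this (admittedly finicky) accounting, no further conceptual ingredient should be required.
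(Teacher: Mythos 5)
Your plan for \eqref{Fierz_10_key} is correct and is exactly the paper's route: contract with $\phi\in\Gd^-$, expand $A_0$, $A_2$ and the regrouped $B_1$ through the 10D tables, and convert $B_0^T,B_2^T,B_4^T$ into $A_0^T,-A_2^T,A_4^T$ (the relative signs coming from reversing the index string past $C$). The coefficients do assemble to $\tfrac1{16}-\tfrac{45}{16}-\tfrac{20}{16}=-4$ on $A_0^T$ and to zero on $A_2^T$ and $A_4^T$, so that part is fine.

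For \eqref{Jacobi_E6} there is a genuine error. You invoke the antisymmetry $(\psi e^i\psi')=-(\psi' e^i\psi)$ and attribute it to "the red summands" of $\Gd^+\otimes\Gd^+$; but in the 10D decomposition $\wedge^1V^*$ is a \emph{blue}, i.e.\ symmetric, summand --- only $\wedge^3V^*$ is antisymmetric. (If the vector bilinear were antisymmetric, $(\psi e^i\psi)$ would vanish identically, every spinor would be pure, and \eqref{equations_1} would be vacuous.) This is not a bookkeeping detail: with your claimed sign the cyclic sum is a total antisymmetrization, under which the antisymmetric $A_3^T$ channel \emph{survives}, and the manipulation closes only to $S=-\tfrac12\sum_{\rm cyc}A_3$, not to $S=0$. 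The correct mechanism is the mirror image of the 16D case: since $(\psi_2e^i\psi_3)$ is symmetric, each summand of \eqref{Jacobi_E6} is symmetric in $\psi_2,\psi_3$, so the cyclic sum $S$ equals one half of the full symmetrization over $\psi_1,\psi_2,\psi_3$; applying $A_1=-\tfrac12A_1^T+\tfrac14A_3^T$, the $A_3^T$ piece is annihilated by symmetrization (the 3-form bilinear being the antisymmetric channel), while $A_1^T$ symmetrizes back to the original, leaving $\mathcal{S}[A_1]=-\tfrac12\mathcal{S}[A_1]$ and hence $S=0$. Note also that a purely cyclic (rather than fully symmetrized) manipulation does not telescope even with the right signs --- one is left with $\tfrac32 S=-\tfrac14\sum_{\rm cyc}A_3$ and needs a second pass through the table --- so the symmetrization step is where the argument actually closes.
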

\begin{proof}
 For the first relation, one uses the table to rewrite $A_0,A_2,B_1$ and eliminate $A_4^T,B_4^T,A_2^T,B_2^T$. For the second relation, we use the table to rewrite $A_1$ in terms of $A_1^T,A_3^T$. The latter will vanish under cyclic permutation.
\end{proof}

\subsection{$\FR{g}_2$ from Clifford Algebra}\label{sec_g2fCA}
In this section, we illustrate the use of Fierz identity by presenting the octonion multiplication as spinor manipulations.

We have mentioned that in $\dim=8$, both the vector representation $V$, and the two spinor representations $\Gd^{\pm}$ have dimension 8. Furthermore, one can impose reality conditions to $V,\Gd^{\pm}$ and get three dim 8 real vector spaces. All three will be identified as $\BB{O}$. Our construction does not rely on the reality condition, so we leave $V,\Gd^{\pm}$ complex for now and identify them as $\BB{C}\otimes\BB{O}$ instead.

We denote the basis of $\BB{R}^8$ as $e^0,\cdots,e^7$, with $e^0$ intended as the real unit of the octonions.
Pick any
\bea s\in\Gd^+,~~Cs^*=s,~~(ss)=s^TCs=1,\nn\eea
and set $t=e^0s$. From $s,t$ we have maps between $\Gd^{\pm}$ and $V$,
\bea s:~V\to \Gd^-,~~v\to vs,~~~~s^{-1}:~~\Gd^-\to V,~~\eta\to (se^i\eta)e^i,\label{map_s}\\
t:~V\to \Gd^+,~~v\to vt,~~~~t^{-1}:~~\Gd^+\to V,~~\xi\to (te^i\xi)e^i,\label{map_t}\eea
where we identified $V\sim V^*$ using the standard Euclidean inner product. When $V$ is complexfied, we use the same inner-product rather than the hermitian inner product so as to maintain complex linearity.
\begin{lemma}\label{lem_triality_map}
 The maps $s,s^{-1}$ are indeed inverse to each other, the same for $t,t^{-1}$.
\end{lemma}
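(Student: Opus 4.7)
The plan is to leverage the Fierz identity \eqref{Fierz_8_II}, specialised to $\phi=\psi=s$. With $\phi=\psi=s$ and arbitrary $\eta\in\Gd^-$, the two terms on the right hand side of \eqref{Fierz_8_II} coincide, and the normalisation $(ss)=1$ on the left hand side reduces it to
\[ \eta \;=\; e^i s\,(s\,e^i\eta) \;=\; \bigl[(s\,e^i\eta)e^i\bigr]\,s, \]
which is exactly the assertion $s\circ s^{-1}(\eta)=\eta$. So a single application of Fierz yields $s\circ s^{-1}=\mathrm{id}_{\Gd^-}$.

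This one identity already forces both halves of the claim. Indeed, $s\circ s^{-1}=\mathrm{id}_{\Gd^-}$ says $s:V\to\Gd^-$ is surjective and $s^{-1}:\Gd^-\to V$ is injective, and since $\dim V=\dim\Gd^-=8$ both maps must be bijections, hence mutually inverse. This sidesteps having to verify $s^{-1}\circ s=\mathrm{id}_V$ by a separate Fierz calculation in the opposite direction.

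For the pair $(t,t^{-1})$ I would repeat the argument verbatim after checking that $t=e^0 s$ satisfies the same normalisation as $s$. Using $(e^0)^t=e^0$ together with \eqref{t_T} gives $(e^0)^T C = C e^0$, and hence
\[ (tt)\;=\;s^T (e^0)^T C\, e^0 s\;=\;s^T C (e^0)^2 s\;=\;(ss)\;=\;1. \]
Since $t\in\Gd^-$, I would invoke the chirality-swapped version of \eqref{Fierz_8_II} with $\phi=\psi=t$ and $\eta\in\Gd^+$; this is legitimate because the $8$D Fierz table \eqref{fierz_8} is symmetric under $\Gd^+\leftrightarrow\Gd^-$. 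The same collapse gives $t\circ t^{-1}=\mathrm{id}_{\Gd^+}$, and the same dimension count finishes the job.

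The whole proof is essentially a one-line exploitation of the Fierz machinery set up in the previous subsection, and I do not expect any real obstacle; the only mild point of attention is bookkeeping of chiralities for the $t$ half, which is notational rather than substantive.
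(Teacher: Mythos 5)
Your proof is correct, and its key computational step coincides with the paper's: specialising the Fierz identity \eqref{Fierz_8_II} to $\phi=\psi=s$ (so the two terms on the right coincide) to obtain $s\circ s^{-1}=\mathrm{id}_{\Gd^-}$. Where you genuinely diverge is in the other composition: the paper verifies $s^{-1}\circ s=\mathrm{id}_V$ by a second direct computation, $v\mapsto vs\mapsto (se^ivs)e^i=\tfrac12(s\{e^i,v\}s)e^i=(ss)\,v=v$, using the symmetry $(se^ivs)=(sve^is)$ together with the Clifford anticommutator, whereas you deduce it abstractly from the one-sided identity via the standard fact that $AB=\mathrm{id}$ between linear spaces of equal finite dimension forces $BA=\mathrm{id}$. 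Both are sound; the paper's direct check is a one-line Clifford manipulation and costs essentially nothing, but your route has the merit of invoking Fierz only once. You are in fact more careful than the paper on the $t$ half: the paper dispatches it with ``the same for $t,t^{-1}$,'' while you explicitly verify $(tt)=(ss)=1$ via \eqref{t_T} and $(e^0)^2=1$, and you correctly flag that one needs the chirality-flipped version of \eqref{Fierz_8_II} since $t=e^0s\in\Gd^-$ --- a legitimate step, as the coefficients in \eqref{fierz_8} do not depend on the overall chirality assignment. No gaps.
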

\begin{proof}
 We apply $s$ then $s^{-1}$
 \bea v\to vs\to (se^ivs)e^i=\frac12(s\{e^i,v\}s)e^i=(ss)v=v,\nn\eea
 where we used $(se^ivs)=(sve^is)$. Now we check $ss^{-1}$
 \bea \eta \to (se^i\eta)e^i\to (se^i\eta)e^is=\eta(ss)=\eta,\nn\eea
 where we used \eqref{Fierz_8_II}.
\end{proof}
\begin{definition}
 We define a multiplication rule on $V$ as
 \bea u\star v:=(t(u)e^is(v))e^i,~~~~u,v\in V.\nn\eea
 This coincides with the octonion multiplication.
\end{definition}
\begin{example}
We compute some examples
\bea &&1\star u=e^0\star u=(te^0e^i us)e^k=(se^ius)e^i=u,\nn\\
&&e^1\star e^1=(te^1e^ie^1s)e^i=-(te^ie^1e^1s)e^i+2(te^1s)e^1=-(se^0e^is)e^i+2\hcancel{(se^0e^1s)}e^1=-e^0,\nn\eea
where we have repeatedly used $(\xi_1uv\xi_2)=(\xi_2vu\xi_1)$ and $\{u,v\}=2u\cdotp v$.
So $e^0$ is the unit while $e^1,\cdots,e^7$ are the imaginary units. We leave it to the reader to show that $\overline{u\star v}=\bar v\star\bar u$, by using $e^0s(u)=t(\bar u)$.

To get more specific multiplication rules, one needs choose a concrete representation of the Clifford algebra and $s$.
 We pick the matrix representation of $e^i$ as in ex.\ref{ex_Gd_and_gamma} (apart from the index shift $12345678\to 01234567$), and
 \bea &&s=\frac{1}{\sqrt2}(|+\!+\!+\!+\ket+|-\!-\!-\!-\ket),\nn\\
 &&C=\gs_1\otimes i\gs_2 \otimes \gs_1\otimes i\gs_2,\nn\\
 &&\gc=\gs_3\otimes\gs_3\otimes\gs_3\otimes\gs_3.\nn\eea
 In this case
 \bea e^2\star e^3&=&(se^0e^2e^ie^3s)e^i=-(se^0e^ie^2e^3s)e^i+2\hcancel{(se^0e^3s)}e^2
 =-\hcancel{(se^2e^3s)}e^0-(se^0e^1e^2e^3s)e^1=e^1,\nn\\
 e^3\star e^7&=&(se^0e^3e^ie^7s)e^i=(se^0e^3e^4e^7s)e^4+\hcancel{(se^0e^3e^5e^7s)}e^5
 =-e^4.\nn\eea
 The detailed table is
   \bea
   \begin{array}{c|cccccccc}
   & e^0 & e^1 & e^2 & e^3 & e^4 & e^5 & e^6 & e^7 \\
   \hline
  e^0 & e^0  &   e^1  &   e^2  &   e^3  &   e^4  &  e^5  &  e^6  & e^7 \\
  e^1 & e^1  & -e^0  &   e^3  &  -e^2  &   e^5  &  -e^4  &   e^7  &  -e^6 \\
  e^2 &  e^2  &  -e^3  & -e^0  &   e^1  &  -e^6  &   e^7  &  e^4  &  -e^5 \\
  e^3 &  e^3  &   e^2  &  -e^1  & -e^0  &   e^7  &   e^6  &  -e^5 &   -e^4 \\
  e^4 &  e^4  &  -e^5  &  e^6  &  -e^7 &  -e^0  &  e^1 &   -e^2  &  e^3 \\
  e^5 &  e^5  & e^4  &  -e^7  &  -e^6 &   -e^1 &  -e^0 &  e^3  & e^2 \\
  e^6 &  e^6  &  -e^7  &  -e^4  &   e^5  &  e^2 &   -e^3 &  -e^0  & e^1 \\
  e^7 &  e^7  &   e^6  &   e^5  &   e^4  &  -e^3 &   -e^2 &   -e^1 &  -e^0 \\
   \end{array}\label{octonion_tbl}\eea
This table does not necessarily agree with the standard multiplication rule, but we will prove that all the known octonion identities are fulfilled.
\end{example}

\begin{proposition}\label{prop_alternating}
  The multiplication satisfies
  \bea u\star \bar u= u\cdotp u, ~~~ u\star v+v\star u=2(u_0v+v_0u-u\cdotp v),
  ~~~ u\star \bar v+v\star  \bar u=2u\cdotp v,\label{oct_prod_sym}\eea
  where $\cdotp$ is the Euclidean inner-product (complexified) and $\bar u=u_0e^0-\sum_{i=1}^7u_ie^i$.

  The multiplication is not associative, but the associator
  \bea [u,v,w]=u\star (v\star w)-(u\star v)\star w\nn\eea
  is totally anti-symmetric. As a direct result, we have
  \bea& (u\star v)\star u=u\star (v\star u)=-(u\cdotp u)\bar v+2(u\cdotp\bar v)u,\nn\\
    &u\star (\bar u\star v)=(u\cdotp u)v.\nn\eea
\end{proposition}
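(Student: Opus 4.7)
The plan is to establish the three bilinear identities by direct computation in the spinor model, prove one alternative law via a Fierz manipulation, and deduce the remaining statements algebraically.

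For $u \star \bar u = u \cdot u$: writing $\bar u = e^0 u e^0$ as a Clifford element gives $\bar u s = e^0 u t$, so $(u \star \bar u)^k = (ut, e^k e^0 ut)$. Because $C^T = C$ in dimension eight, the pairing satisfies $(\xi, M \xi) = (\xi, M^t \xi)$; since $(e^k e^0)^t = -e^k e^0$ for $k \neq 0$, only $k = 0$ survives, where $(ut, ut) = (s, e^0 u^2 e^0 s) = u \cdot u$ using $u^2 = u \cdot u$ and $(s,s) = 1$. Polarizing in $u$ yields $u \star \bar v + v \star \bar u = 2 u \cdot v$. For $u \star v + v \star u$: using $(M\xi, \eta) = (\xi, M^t \eta)$ turns the sum into $(s, e^0(u e^i v + v e^i u) s) e^i$; the Clifford identity $u e^i v + v e^i u = 2(u^i v + v^i u - (u \cdot v) e^i)$ combined with the scalar projection $(s, e^0 e^j s) = \delta^{j0}$ (proved by the same $t$-symmetry argument) gives the result.

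The main obstacle is the left-alternative law $u \star (\bar u \star v) = (u \cdot u) v$, which is equivalent (via $u = 2 u_0 e^0 - \bar u$, $u \star e^0 = u$, and the bilinear identities) to $[u,u,v] = 0$. Expanding twice from the definition produces a double spinor sum of the form $\sum_{i,j} (e^0 u s, e^j vs)(e^i ut, e^j s) e^i$, which is a type-$B_1$ Fierz configuration in the notation of \eqref{fierz_8}; I apply $B_1 = B_0^T - \tfrac{1}{2} B_2^T$ and simplify the resulting $(s, Ms)$ bilinears by extracting their $t$-symmetric components as in Part 1. After the Clifford relation $u^2 = u \cdot u$ eliminates the surviving grade-2 terms, the expression consolidates to $(u \cdot u) v$. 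The right-alternative law $(u \star v) \star v = u \star (v \star v)$ then follows by taking the conjugate of the left-alternative law using the identity $\overline{u \star v} = \bar v \star \bar u$ (noted in the preceding example; its quick proof uses $e^0 s(u) = t(\bar u)$). With both alternative laws in hand, the associator $[u,v,w] = (u\star v)\star w - u\star(v\star w)$ vanishes whenever any two adjacent arguments coincide, hence is totally anti-symmetric by trilinearity.

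The remaining consequences are algebraic. Flexibility $(u \star v) \star u = u \star (v \star u)$ is $[u, v, u] = 0$ by anti-symmetry. To compute $u \star (v \star u)$ explicitly, act with $u$ on the left of $u \star v + v \star u = 2(u_0 v + v_0 u - (u \cdot v) e^0)$, substitute $u \star u = 2 u_0 u - (u \cdot u) e^0$ and $u \star e^0 = u$, replace $u \star (u \star v)$ by $(u \star u) \star v = 2 u_0 (u \star v) - (u \cdot u) v$ using left-alternativity, and simplify via the algebraic identities $v - 2 v_0 e^0 = -\bar v$ and $2 u_0 v_0 - u \cdot v = u \cdot \bar v$ to obtain $-(u \cdot u) \bar v + 2 (u \cdot \bar v) u$. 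Finally, $u \star (\bar u \star v) = (u \star \bar u) \star v = (u \cdot u) v$ follows from left-alternativity and the first identity. The real difficulty sits entirely in the Fierz-based proof of the left-alternative law; everything else is bookkeeping.
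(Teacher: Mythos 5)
Your proposal is correct, but it organizes the core of the argument differently from the paper. For the bilinear identities you argue by symmetry of the spinor pairings (the $\wedge^2$ summand of $\Gd^+\otimes\Gd^+$ is antisymmetric, so $(s\,e^ke^0 s)=0$ for $k\neq 0$) plus polarization, where the paper expands $u$ in components and cancels terms by hand; both work, and your polarization of $u\star\bar u=u\cdot u$ is exactly the paper's route to the third identity. The genuine divergence is in the associator. The paper computes $[u,v,w]$ in closed form, $[u,v,w]_i=-(tue^iwvt)+(svue^iws)$, by applying \eqref{Fierz_8_II} to both terms, and then reads off total antisymmetry by symmetrizing each adjacent pair; the explicit formula $u\star(v\star u)=-(u\cdot u)\bar v+2(u\cdot\bar v)u$ falls out of that same computation. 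You instead prove only the left-alternative law $[u,u,v]=0$ by a Fierz manipulation (equivalent in content to one application of \eqref{Fierz_8_II}), obtain the right-alternative law by conjugating with $\overline{a\star b}=\bar b\star\bar a$ (which gives $\overline{[u,v,w]}=-[\bar w,\bar v,\bar u]$), and then get full antisymmetry by polarizing the two diagonal vanishings --- a standard and valid argument. Your derivation of the explicit formula is then purely algebraic from the bilinear identities plus alternativity, which is arguably cleaner than the paper's direct spinor evaluation and is the classical composition-algebra argument; the paper's route buys the closed form of the associator itself, which yours does not produce. The only soft spot is that the one genuinely hard step in your plan --- the Fierz computation establishing $[u,u,v]=0$ --- is described rather than executed (and the displayed double sum has its Clifford factors slightly garbled, e.g.\ $e^0u$ versus $ue^0=-e^0u+2u_0$), so you should carry that computation through explicitly; the coefficients you quote from the $B_1$ row of \eqref{fierz_8} are the right ones.
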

\begin{proof}
We compute the first (the $i,j$ indices go from 0 to 7 while $a,b$ from 1 to 7)
\bea u\star \bar u&=&(t ue^i\bar us)e^i=
 u_0^2(te^0e^ie^0s)e^i+u_0u_a(te^ae^ie^0-e^0e^ie^a)e^i-\frac12u_au_b(t(e^ae^ie^b+e^be^ie^a))e^i\nn\\
 &=&u_0^2+u_0u_a(s(e^0e^ae^ie^0-e^ie^a)s)e^i-\frac12u_au_b(t(-e^ae^be^i+2e^a\gd^{ib}-e^be^ae^i+2e^b\gd^{ia})e^i\nn\\
 &=&u_0^2+2u_0u_a\hcancel{(se^0e^as)}e^0-\frac12u_au_b(se^0(-2\gd^{ab}e^i+4\hcancel{e^a\gd^{ib}})s)e^i=u_0^2+u_a^2.\nn\eea
Applying this relation to $u+v$ and $u+\bar v$, we get the other two relations.

Let us compute the associator
\bea [u,v,w]_i=(tue^ie^js)(tve^jws)-(tue^jvs)(te^je^iws).\nn\eea
We apply \eqref{Fierz_8_II} to both terms
\bea (tue^ie^js)(tve^jws)&=&-(tue^ie^js)(tvwe^js)+2(tue^iws)(tvs)\nn\\
&=&-(tue^ie^js)(se^jwvt)+2(tue^iws)(tvs)=-(tue^iwvt)+2v_0(u\star w)_i\nn\\
(tue^jvs)(te^je^iws)&=&-(te^juvs)(te^je^iws)+2(tvs)(tue^iws)\nn\\
&=&-(svue^iws)+2v_0(u\star w)_i\nn\\
\left[u,v,w\right]_i&=&-(tue^iwvt)+(svue^iws).\nn\eea
If we want to symmetrise $u,v$, we can write
\bea [u,v,w]_i&=&-(te^iwuvt)-2u_i(w\cdotp v)+2v_i(u\cdotp w)+(svue^iws),\nn\\
(u \leftrightarrow v)&\To& -2w_i(u\cdotp v)+2(v\cdotp u)w_i=0.\nn\eea
In the same way the associator is anti-symmetric in $v,w$ and also in $w,u$.

For the final two identities, clearly $(u\star v)\star u$ and $u\star (v\star u)$ are equal due to the anti-symmetry of the associator. We have computed above that
\bea u\star (v\star u)&=&-(tue^iuvt)e^i+2v_0(u\star u)=(u\cdotp u)v-2(u\cdotp v)u+2v_0(2u_0u-u\cdotp u)\nn\\
&=&-(u\cdotp u)\bar v+2(u\cdotp \bar v)u.\nn\eea
The last identity follows from $[u,\bar u,v]=[u,-u+2u_0,v]=-[u,u,v]=0$, and so $u\star(\bar u\star v)=(u\star \bar u)\star v=(u\cdotp u)v$.
\end{proof}
\begin{definition}
 Let $V=\BB{O}$ be the vector representation of $Spin(8)$, the $G_2$ group is the unique simply connected subgroup $G_2\subset Spin(7)\subset Spin(8)$ acting on the imaginary octonions that preserves their multiplication rule.
\end{definition}
The multiplication rule here is defined after the choice of one real spinor $s$. Thus the subgroup in question needs only preserve $s$.
From our explicit choice of $s$, we can easily get the Lie algebra $\FR{g}_2$. First, note that the sub-algebra $\FR{su}(3)\subset \FR{spin}(6)$ annihilates $s$, giving us 8 generators of $\FR{g}_2$. Apart from these, we have
\bea \frac14(e^2+ie^3)(e^4+ie^5)-\frac{i}2 e^1(e^6-ie^7),\nn\\
\frac14(e^4+ie^5)(e^6+ie^7)-\frac{i}2 e^1(e^2-ie^3),\nn\\
\frac14(e^6+ie^7)(e^2+ie^3)-\frac{i}2 e^1(e^4-ie^5),\label{g2/su3}\eea
plus their complex conjugates. Altogether we get the 14 dimensional Lie algebra $\FR{g}_2$.

\smallskip

We have identified $V$ as $\BB{O}$, but we could have also done the same for $\Gd^{\pm}$ by using the $s,t$ maps in lem.\ref{lem_triality_map} to first identify $\Gd^{\pm}$ with $V$ and then with $\BB{O}$. Thus we can also present the octonion multiplication as a map of e.g. $V\otimes \Gd^+\to \Gd^-$. It turns out that this map is none other than the Clifford action. We first define the bar action
\begin{definition}\label{def_bar}
  Let $x\in V$, $\xi\in \Gd^+$ and $\eta\in\Gd^-$ (not assumed real), let
  \bea \bar x=2(x\cdotp e^0)e^0-x,~~~~\bar\xi=2(s\xi)s-\xi,~~~~\bar\eta=2(t\eta)t-\eta.\nn\eea
  In particular, these actions are $\BB{C}$-linear.
\end{definition}
\begin{proposition}\label{prop_oct_mult_acatar}
  Let $x,\xi,\eta$ be as above,
  \bea t(x\star s^{-1}(\eta))=x\bar\eta, && t(s^{-1}(\eta)\star x)=\overline{\bar x\eta}\nn\\
  s(t^{-1}(\xi)\star x)=x\bar\xi,&&s(x\star t^{-1}(\xi))=\overline{\bar x\xi}\nn\\
  t^{-1}(\xi)\star s^{-1}(\eta)=(\xi e^i\eta)e^i,&&s^{-1}(\eta)\star t^{-1}(\xi)=\overline{(\bar\eta e^i\bar\xi)e^i},\nn\\
  t^{-1}(\bar\xi_1)\star t^{-1}(\xi_2)+(1\leftrightarrow2)=2(\xi_1\xi_2),&&s^{-1}(\bar\eta_1)\star s^{-1}(\eta_2)+(1\leftrightarrow2)=2(\eta_1\eta_2).\nn\eea
\end{proposition}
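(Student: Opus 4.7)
The eight assertions come in four pairs, and I would prove the first member of each pair directly and the conjugate partner by reduction to its sibling using either the octonion commutator relation of prop.\ref{prop_alternating} or the conjugation rule $\overline{a\star b} = \bar b\star\bar a$. The latter is itself a short Clifford calculation: the scalar part reduces to $(s\bar u\bar v s) = (suvs)$, which follows from expanding $\bar u\bar v - uv$ and using $(ss) = 1$ together with $(se^i us) = u^i$.

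The simplest is the third pair. Substituting $u = t^{-1}(\xi)$, $v = s^{-1}(\eta)$ into the definition $u\star v = (t(u) e^i s(v))e^i$ and invoking lem.\ref{lem_triality_map} in the form $t(t^{-1}(\xi)) = \xi$, $s(s^{-1}(\eta)) = \eta$ gives $t^{-1}(\xi)\star s^{-1}(\eta) = (\xi e^i\eta)e^i$ at once. The partner follows from the conjugation rule applied to the same identity with $\bar\xi,\bar\eta$ in place of $\xi,\eta$, once one verifies that bar intertwines $s^{-1}$ and $t^{-1}$; the latter is a direct unfolding of def.\ref{def_bar} that uses $(te^i s)e^i = e^0$, an identity which holds because $(se^{0i}s)$ vanishes by antisymmetry of the $\wedge^2$-valued bilinear in $\Gd^+\otimes\Gd^+$.

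For the first identity, $t(x\star s^{-1}(\eta)) = x\bar\eta$: unfolding gives $x\star s^{-1}(\eta) = (xt\,e^i\,\eta)e^i$, so applying $t$ produces $(xte^i\eta)e^i t$. Passing $x$ through the bilinear via $xC = Cx$ (valid since $x$ is of degree one in Clifford), one has $(xte^i\eta) = (txe^i\eta) = -(te^i x\eta) + 2x^i(t\eta)$. Summed against $e^i t$, the first piece collapses to $-t(t^{-1}(x\eta)) = -x\eta$ by lem.\ref{lem_triality_map}, while the second yields $2(t\eta)xt$; their sum $2(t\eta)xt - x\eta$ equals $x\bar\eta$ by def.\ref{def_bar}. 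Its partner $t(s^{-1}(\eta)\star x) = \overline{\bar x\eta}$ is then extracted by applying $t$ to the commutator relation $u\star v + v\star u = 2(u_0 v + v_0 u - u\cdotp v)$ from prop.\ref{prop_alternating} with $u = x$, $v = s^{-1}(\eta)$, substituting the first identity just proved, and matching coefficients against the expansion of $\overline{\bar x\eta}$ using $(se^0\eta) = (t\eta)$ and $x\cdotp s^{-1}(\eta) = (sx\eta)$. The two identities of the second pair are obtained from these by the symmetry swapping $s\leftrightarrow t$ and reversing the $\star$-product (a direct consequence of $(\phi X\psi) = (\psi X^t\phi)$ applied to the defining bilinear), which sends $t$-versions to $s$-versions while exchanging $\xi$ and $\eta$.

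For the fourth pair, I would substitute $u = t^{-1}(\xi_1)$, $v = t^{-1}(\xi_2)$ into $\bar u\star v + \bar v\star u = 2u\cdotp v$ from \eqref{oct_prod_sym}. The claim then reduces to two auxiliary facts: (i) the bar-intertwining $t^{-1}(\bar\xi) = \overline{t^{-1}(\xi)}$ already recorded above; and (ii) the isometry $(\xi_1\xi_2) = t^{-1}(\xi_1)\cdotp t^{-1}(\xi_2)$, which I would derive by pairing both sides of the Fierz identity \eqref{Fierz_8_II} (applied with $\phi = \xi_1$, $\psi = \xi_2$, $\eta = t$) against $t$, using $(tt) = 1$ and the symmetry $(\xi e^i t) = (te^i\xi)$ that follows from $C^T = C$ in dimension $8$. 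The $s$-analog goes through verbatim with $s$ replacing $t$. The main obstacle throughout is the sign bookkeeping: bar on spinors does not commute with Clifford multiplication by a vector in any tidy way, so the conjugate partner identities require patient tracking of terms rather than any essentially new input beyond the Clifford anticommutation relations, lem.\ref{lem_triality_map}, and the Fierz identity \eqref{Fierz_8_II}.
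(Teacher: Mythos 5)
Your plan is correct in substance but organised quite differently from the paper. The paper's proof simply computes three representative entries of the table head-on by long chains of Clifford anticommutations and applications of the Fierz identity \eqref{Fierz_8_II}, and leaves the remaining entries to the reader; you instead prove one member of each pair directly (your computations of $t(x\star s^{-1}(\eta))=x\bar\eta$ and of $t^{-1}(\xi)\star s^{-1}(\eta)=(\xi e^i\eta)e^i$ are exactly the paper's, the latter being an immediate consequence of lem.\ref{lem_triality_map}) and then deduce the partner from the already-established octonion identities of prop.\ref{prop_alternating} together with the conjugation rule $\overline{u\star v}=\bar v\star\bar u$ and the intertwining $t^{-1}(\bar\xi)=\overline{t^{-1}(\xi)}$, $s^{-1}(\bar\eta)=\overline{s^{-1}(\eta)}$. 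This is a legitimate and arguably cleaner organisation (there is no circularity: prop.\ref{prop_alternating} is proved independently of the present proposition), and I checked that your reductions do close up — e.g.\ for $t(s^{-1}(\eta)\star x)=\overline{\bar x\eta}$ the symmetrised product formula plus the first identity does reproduce $4x_0(t\eta)s-2(sx\eta)s-2x_0e^0\eta+x\eta$ on both sides, and your isometry $(\xi_1\xi_2)=t^{-1}(\xi_1)\cdotp t^{-1}(\xi_2)$ follows from \eqref{Fierz_8_II} exactly as you say. Two places need shoring up. First, the reduction for $t(s^{-1}(\eta)\star x)$ silently requires evaluating the spinor $t(s^{-1}(\eta))$; one must show $t(s^{-1}(\eta))=(se^i\eta)e^it=2(t\eta)s-e^0\eta$ (move $e^0$ past $e^i$ and use $ss^{-1}=\operatorname{id}$), which you do not mention. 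Second, your one-line justification of $\overline{u\star v}=\bar v\star\bar u$ by checking "the scalar part" is not sufficient as stated, since this is a full vector identity; the paper's hint $e^0s(u)=t(\bar u)$, or a short Fierz computation of all components, is needed. Both are routine repairs, and with them your argument is complete.
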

These equalities show precisely how the octonion multiplications are related to the Clifford actions.
\begin{proof}
We will demonstrate a few items to give an idea of the proof.
\bea &&x\star s^{-1}(\eta)=(txe^ie^js)(se^j\eta)e^i=(txe^i\eta)e^i,\nn\\
&&t(x\star s^{-1}(\eta))=e^it(txe^i\eta)=-e^it(te^ix\eta)+2xt(t\eta)=-x\eta+2xt(t\eta)=x\bar\eta\nn\eea
where we have used \eqref{Fierz_8_II} repeatedly.

Let us also compute $s^{-1}(\eta)\star t^{-1}(\xi)$
\bea s^{-1}(\eta)\star t^{-1}(\xi)&=&(se^i\eta)(te^ie^ke^js)(te^j\xi)e^k=(se^i\eta)(se^0e^ie^ke^js)(te^j\xi)e^k\nn\\
&=&-(se^i\eta)(se^ie^ke^jt)(te^j\xi)e^k+2(t\eta)(se^ke^js)(te^j\xi)e^k\nn\\
&&-2(se^i\eta)(se^ie^js)(te^j\xi)e^0+2(se^i\eta)(se^ie^ks)(te^0\xi)e^k\nn\\
&=&-(\eta e^k\xi)e^k+2(t\eta)(te^j\xi)e^j-2(se^i\eta)(te^i\xi)e^0+2(se^i\eta)(s\xi)e^i\nn\\
&=&-(\bar\eta e^k\bar \xi)e^k+4(t\eta)(s\xi)e^0-2(se^i\eta)(te^i\xi)e^0\nn\\
&=&-(\bar\eta e^k\bar \xi)e^k+4(t\eta)(s\xi)e^0+2(se^i\eta)(se^ie^0\xi)e^0-4(t\eta)(s\xi)e^0\nn\\
&=&-(\bar\eta e^k\bar \xi)e^k+2(\eta e^0\xi)e^0=\overline{(\bar\eta e^k\bar \xi)e^k}\nn.\eea
And we compute the final relation
\bea t^{-1}(\bar\xi_1)\star t^{-1}(\xi_2)&=&(te^i\bar\xi_1)(te^ie^ke^js)(te^j\xi_2)e^k=(\bar\xi_1e^ke^js)(te^j\xi_2)e^k\nn\\
&=&-(\bar\xi_1e^ke^0\xi_2)e^k+2(\bar\xi_1e^ke^0s)(s\xi_2)e^k\nn\\
&=&(\xi_1e^ke^0\xi_2)e^k-2(\xi_1s)(se^ke^0\xi_2)e^k-2(\xi_1e^ke^0s)(s\xi_2)e^k+4(\xi_1s)(s\xi_2)e^0\nn\\
&=&(\xi_1e^ke^0\xi_2)e^k-2(\xi_1s)(se^ke^0\xi_2)e^k+2(se^ke^0\xi_1)(s\xi_2)e^k.\nn\eea
After symmetrising $1,2$, only the first term survives: $(\xi_{(1}e^ke^0\xi_{2)})e^k=2(\xi_1\xi_2)e^0$.
\end{proof}

\section{Adam's Construction of exceptional Lie algebra}
The exceptional Lie algebra can be constructed using the octonions, see \cite{Tits} \cite{FREUDENTHAL1964145} (also \cite{Barton:2000ki} for a variant construction, while \cite{Ramond_exp} contains an exposition more to the taste of physicists). But in \cite{Adams1996lectures} a very different, but more streamlined approach is used. The basic idea is to start from a Lie algebra $\FR{g}$, and a representation $V$ of $\FR{g}$.
One can define the Lie bracket between $\FR{g}$ and $V$ by the Lie algebra action: $[x,v]:=x\circ v$ for $x\in\FR{g}$ and $v\in V$. If one can find a bi-linear map $V\otimes V\to \FR{g}$, then one can define a Lie bracket $[V,V]\to\FR{g}$. One needs to check two things, first $[V,V]\to\FR{g}$ may not be anti-symmetric. But this is not a big issue, in case this bracket is symmetric, one gets potentially a super-Lie-algebra. A more stringent condition is the Jacobi identity i.e. whether or not $[V,[V,V]]=0=[\FR{g},[V,V]]$ under cyclic permutation. It turns out that there are only finitely many $\FR{g}$ and $V$ that can fulfil these criteria.

\subsection{$E_8$}
We start with the Lie algebra $\FR{e}_8$. As a vector space, we take
\bea \FR{e}_8\simeq \FR{so}(16)\oplus \Gd^+\nn\eea
of total dimension $120+128=248$.
\begin{definition}\label{def_e8}
 The bi-linear maps
 \bea &&\FR{g}\otimes \Gd^+\to \Gd^+:~[x,\xi]=x\circ\xi,\nn\\
 &&\Gd^+\otimes\Gd^+\to \FR{g}:~[\xi,\eta]:=-\frac{1}{4}(\xi e^{ij}\eta) x^{ij},~~x^{ij}=e^i\wedge e^j\nn\eea
 along with the Lie bracket of $\FR{so}(16)$ give the Lie algebra structure of $\FR{e}_8$.

 The smallest representation of $E_8$ is its adjoint representation of dimension 248.
\end{definition}
\begin{proof}
 First $[\xi,\eta]$ as defined above is clearly anti-symmetric. We need only check Jacobi-identity. Examine first
 \bea [x,[y,\xi]]+[y,[\xi,x]]+[\xi,[x,y]]\stackrel{?}{=}0.\nn\eea
 That this is zero just says that $\Gd^+$ is a representation of $\FR{g}$.
 Next we check the Jacobi identity involving $x,\xi,\eta$, we will need def.\ref{def_spin_action} for the action of $\FR{so}$ on $\Gd$ (we take $x=e^p\wedge e^q$)
 \bea &&[x,[\eta,\xi]]+[\eta,[\xi,x]]+[\xi,[x,\eta]]
 =[\frac12e^{pq},-\frac{1}{8}(\eta e^{rs}\xi)e^{rs}]+\frac{1}{8}(\eta e^{rs}\frac12e^{pq}\xi)e^{rs}-\frac{1}{8}(\xi e^{rs}\frac12e^{pq}\eta)e^{rs}\nn\\
 &=&-\frac{1}{16}(\eta e^{rs}\xi)[e^{pq},e^{rs}]+\frac{1}{16}(\eta[e^{rs},e^{pq}]\xi)e^{rs}\nn\\
 &=&\frac{1}{4}(\eta e^{s[q}\xi)e^{p]s}+\frac{1}{4}(\eta e^{s[p}\xi)e^{q]s}=0.\nn\eea
 The most non-trivial Jacobi is between three spinors
 \bea [\xi,[\eta,\zeta]]+\textrm{cyc perm}(\xi,\eta,\zeta)=\frac{1}{32}(\eta e^{pq}\zeta)e^{pq}\xi+\textrm{cyc perm}(\xi,\eta,\zeta).\nn\eea
 One realises that the rhs is just \eqref{Jacobi_E6}.
\end{proof}
\begin{definition}(Ch.7 \cite{Adams1996lectures})
Pick a definite Killing form of the real vector space $\FR{e}_8$, then the automorphism $\opn{Aut}(\FR{e}_8)$ is a compact subgroup of $SO(\FR{e}_8)$. Its identity component is the compact real Lie group $E_8$.
\end{definition}

\subsection{$E_6$}\label{sec_E6}
We skip over $E_7$ and go directly to $E_6$. The construction is very concisely given in the beginning few paragraphs of chapter 8 of \cite{Adams1996lectures}.

We split $V=\BB{R}^{16}$ into $\BB{R}^{10}\oplus\BB{R}^6$. We likewise break $\FR{so}(16)$ to $\FR{so}(10)\oplus \FR{so}(6)$.
There is a well-known embedding $\FR{u}(3)\hookrightarrow \FR{so}(6)$ by regarding $\BB{R}^6$ as $\BB{C}^3$. We further split $\FR{u}(3)=\FR{su}(3)\oplus \FR{u}(1)$ with the last $\FR{u}(1)$ acting on all three factors of $\BB{C}^3$ with identical weight 1, we name it $\varrho$.
\begin{definition}\label{def_e6}
The $\FR{e}_6$ Lie algebra is the sub-algebra of $\FR{e}_8$ that centralises $\FR{su}(3)$. The $E_6$ group is the centraliser of $SU(3)\subset E_8$. And $E_6$ contains a subgroup of the same rank
\bea Spin(10)\times_{\BB{Z}_4}U(1).\nn\eea
\end{definition}
Let us unpack this definition a bit. Let
\bea \varrho=2e^{11}\wedge e^{12}+2e^{13}\wedge e^{14}+2e^{15}\wedge e^{16},\label{varrho}\eea
be the $\FR{u}(1)$ above (in particular $\varrho$ acts on spinors as $e^{11,12}+e^{13,14}+e^{15,16}$, see def.\ref{def_spin_action}). The $\FR{su}(3)$ part is generated by
\bea (e^{a}+ie^{a+1})\wedge (e^{b}-ie^{b+1}),~~~a,b=11,13,15,\nn\eea
Clearly the entire $\FR{so}(10)$ and $\varrho$ centralise $\FR{su}(3)$. As for $\Gd^+$, we need to describe the spinors that are annihilated by the generators of $\FR{su}(3)$ above.
Looking at our explicit choice of $\Gd$ in \ref{ex_Gd_and_gamma} and how the Clifford acts on $\Gd$, we see that the spinors below are annihilated by $\FR{su}(3)$
\bea \Gd_{10}\otimes |+\!++\ket,~~~\Gd_{10}\otimes |-\!--\ket\nn\eea
where we use $|\pm\ket$ to denote the basis of $\BB{C}^2$. Furthermore, the 16D chirality operator is
\bea \underbrace{\gs_3\otimes \cdots \otimes \gs_3}_{8}=\gc_{10}\otimes \gs_3\otimes\gs_3\otimes\gs_3,\nn\eea
hence $\Gd^+_{16}$ must have even number of $-$'s. Thus we are left with two kinds of spinors
\bea \psi\otimes |+\!++\ket,~~\phi\otimes|-\!--\ket,~~~\psi\in\Gd^+_{10},~~\phi\in\Gd^-_{10}\nn\eea
It is also clear that $\varrho$ would act on these spinors with weight $\pm3$.
From now on, we will no longer write the $|+\!++\ket,|-\!--\ket$ factors, rather we just keep in mind that $\Gd^{\pm}_{10}$ has $\varrho$ weight $\pm3$ (actually $\pm 3i$, but it is customary to omit the $i$). The Lie bracket of $\FR{e}_6$ can be read off from that of $\FR{e}_8$, we collect these into
\begin{definition}\label{def_e6_bracket}
The $\FR{e}_6$ Lie algebra decomposes as $\FR{so}(10)\oplus \FR{u}(1)\oplus \Gd_{10}^+\oplus\Gd^-_{10}$. Let $x\in\FR{so}(10)$, $\varrho\in\FR{u}(1)$, $\xi\in\Gd^+$ and $\eta\in \Gc^-$, we have the Lie brackets
\bea &&[x,\xi]=x\circ\xi,~~[x,\eta]=x\circ\eta,\nn\\
&&\left[\xi,\eta\right]=-\frac{1}{4}(\xi e^{ij}\eta)x^{ij}+\frac{i}{4}(\xi\eta)\varrho\nn\\
&&\left[\varrho,\xi\right]=3i\xi,~~[\varrho,\eta]=-3i\eta.\nn\eea
We remind the reader that $x\circ \xi$ means the Clifford action on $\xi$, i.e.  $x^{ij}\circ\xi=(1/2)e^{ij}\xi$. This Lie algebra has a Hermitian Killing form
\bea \bra x+\phi+r\varrho,y+\psi+s\varrho\ket=-\frac{1}{8}\Tr[xy]+\phi^{\dag}\psi+12rs\nn\eea
\end{definition}
\begin{proof}
That the centraliser should form a Lie algebra is not surprising. Equally straightforward is the first line of the Lie bracket. As for the second line, let us recall that $\xi$, resp. $\eta$ is secretly accompanied by $|+\!++\ket$ resp. $|-\!--\ket$. Recall the $\FR{e}_8$ Lie bracket between two spinors in def.\ref{def_e8} i.e.
\bea [\psi,\phi]=-\frac14(\psi^TC_{16}e^{\mu\nu}\phi)x^{\mu\nu},~~~\psi=\xi\otimes|+\!++\ket,~~\phi=\eta\otimes|-\!--\ket \nn\eea
where we set $\mu\nu\in\{1,\cdots,16\}$, $ij\in\{1,\cdots 10\}$ and $p,q\in\{11,\cdots,16\}$ temporarily. The $C_{16}$ splits into $C_{10}\otimes C$ with $C=i\gs_2\otimes\gs_1 \otimes i\gs_2$. There are only two possibilities when the Lie bracket is not zero
\bea [\psi,\phi]&=&-\frac14(\xi^TC_{10}e^{ij}\eta)(\bra+\!++|C|-\!--\ket)x^{ij}-\frac14(\xi^TC_{10}\eta)(\bra+\!++|Ce^{pq}|-\!--\ket)x^{pq}\nn\\
&=&-\frac14(\xi e^{ij}\eta)x^{ij}-2\times\frac14(\xi\eta)(-i)(x^{11,12}+x^{13,14}+x^{15,16})\nn\\
&=&-\frac14(\xi e^{ij}\eta)x^{ij}+\frac{i}{4}(\xi\eta)\varrho.\nn\eea
\end{proof}
Finally, for the subgroup of maximal rank stated in def.\ref{def_e6}. We observe the $SU(3)$ subgroup sits in $E_8$ as
\bea SU(3)\subset Spin(6)\subset Spin(16)\to E_8.\nn\eea
Even though the last map is not injective, its kernel is a $\BB{Z}/2\BB{Z}$ which intersects $SU(3)$ trivially, and so $SU(3)$ is a subgroup.
It turns out that $Spin(10)$ and $U(1)$ are the obvious and only centraliser of $SU(3)$, but the map $Spin(10)\times U(1)\to E_6$ has a kernel $\BB{Z}/4\BB{Z}$.
To see this, we note that $g=e^1\cdots e^{10}$ is, according def.\ref{def_Spin} and the remark afterwards, an element of $Spin(10)$. We see easily that $g^2=-1$, in fact $g=i\gc$ according to \eqref{chirality}. The action of $g$ on $\Gd^{\pm}$ is by a factor $\pm i$, which is the same as $-i\in U(1)$ if we remember that $U(1)$ acts on $\Gd^{\pm}$ with weight $\pm 3$.

\subsection{Representations of $E_6$}
We can decompose the adjoint representation of $E_8$ into representations of $E_6$, amongst these the 27 and its conjugate will be crucial for us.
\begin{definition}\label{def_27_e6}(Cor 11.5 \cite{Adams1996lectures})
 The fundamental representation of $\FR{e}_6$ has 27 dimension, as a representation of $\FR{so}(10)\oplus\FR{u}(1)$ is decomposes as
 \bea {\bf 27}=V_2\oplus \Gd^+_{-1}\oplus 1_{-4}\nn\eea
 where $V$ is the vector representation of $\FR{so}(10)$ and the subscript denotes the weight under $\FR{u}(1)$. The matrix representing $(x,\xi,\eta,\varrho)\in\FR{e}_6$ can be cast in block form as
 \bea &&x\circ\begin{bmatrix}
        v \\ \psi \\ s
      \end{bmatrix}=\begin{bmatrix}
        xv \\ x\circ \psi \\ 0
      \end{bmatrix},~~~\varrho\circ\begin{bmatrix}
        v \\ \psi \\ s
      \end{bmatrix}=\begin{bmatrix}
        2iv \\ -i\psi \\ -4is
      \end{bmatrix},\nn\\
      &&\xi\circ\begin{bmatrix}
        v \\ \psi \\ s
      \end{bmatrix}=\begin{bmatrix}
        \frac{1}{\sqrt2}(\xi e^i\psi) e^i \\ s\xi \\ 0
      \end{bmatrix},~~\eta\circ\begin{bmatrix}
        v \\ \psi \\ s
      \end{bmatrix}=\begin{bmatrix}
       0 \\ -\frac{1}{\sqrt2} v\eta \\ -(\eta\psi)
      \end{bmatrix}.\nn\eea
\end{definition}
\begin{proposition}\label{prop_d_tensor}
 The dim 27 representation has a cubic invariant $d({\textrm -},{\textrm -},{\textrm -})$. Let $\Psi_{1,2,3}\in{\bf 27}$ with $\Psi_i=[v_i,\psi_i;s_i]$, set
 \bea d(\Psi_1,\Psi_2,\Psi_3)=(v_1\cdotp v_3)s_2-\frac{1}{\sqrt2}(\psi_1v_3\psi_2)+\textrm{cyc perm}.\nn\eea
 It is totally symmetric in $\Psi_{1,2,3}$.

 We can dualise $\Psi_3$ and regard $d$ as a map $\diamond:~{\bf 27}\otimes^s{\bf 27}\to {\bf 27}^*$, it reads
 \bea \Psi_1\diamond\Psi_2=\begin{bmatrix}
                             v_1s_2+v_2s_1-\frac1{\sqrt2}(\psi_1 e^i\psi_2)e^i \\
                             -\frac{1}{\sqrt2}(v_1\psi_2+v_2\psi_1) \\
                             v_1\cdotp v_2
                            \end{bmatrix}.\nn\eea
\end{proposition}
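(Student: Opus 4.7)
The plan is to verify the two assertions in sequence: (i) $d$ is totally symmetric and $E_6$-invariant, and (ii) the dualized formula for $\diamond$ follows by collecting coefficients.

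For total symmetry, cyclic symmetry is built into the ``$+\text{cyc perm}$'' definition, so one only needs a single transposition, say $\Psi_1\leftrightarrow\Psi_2$. The bosonic terms $(v_i\cdotp v_j)s_k$ are manifestly symmetric; the spinorial terms $(\psi_i v_k \psi_j)$ are symmetric in $\psi_i\leftrightarrow\psi_j$ because the $V^*$-summand of $\Gd^+\otimes\Gd^+$ at $\dim=10$ sits in the symmetric part of the decomposition \eqref{decomp_8_ev}. Since $S_3$ is generated by a $3$-cycle and a transposition, full symmetry follows.

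For $E_6$-invariance I work through the decomposition $\FR{e}_6=\FR{so}(10)\oplus\FR{u}(1)\oplus\Gd^+\oplus\Gd^-$ of def.\ref{def_e6_bracket}. Invariance under $\FR{so}(10)$ is automatic because $d$ is assembled from $\FR{so}(10)$-invariant contractions (the Euclidean pairing on $V$ and the Dirac-type pairing $\Gd^+\otimes V\otimes\Gd^+\to\BB{C}$). The $\FR{u}(1)$-invariance follows from weight matching ($(v\cdotp v)s$ has weight $2+2-4=0$; $(\psi v\psi)$ has weight $-1+2-1=0$). For the spinor generators, polarization reduces invariance under $X\in\FR{e}_6$ to showing $d(X\circ\Psi,\Psi,\Psi)=0$ for every $\Psi$. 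For $X=\eta\in\Gd^-$, substituting $\eta\circ\Psi=[0;\,-\tfrac{1}{\sqrt 2}v\eta;\,-(\eta\psi)]$ and using $v^2=v\cdotp v$ in the Clifford algebra together with $v^T=CvC^{-1}$ (from \eqref{t_T}) gives the clean cancellation $(v\cdotp v)(\eta\psi)-(v\cdotp v)(\eta\psi)=0$; no Fierz is required. The main obstacle is $X=\xi\in\Gd^+$: with $\xi\circ\Psi=[\tfrac{1}{\sqrt 2}(\xi e^j\psi)e^j;\,s\xi;\,0]$, direct substitution into the formula for $d$ leaves a residual $d(\xi\circ\Psi,\Psi,\Psi)=-\tfrac12(\xi e^j\psi)(\psi e^j\psi)$. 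Vanishing of this residual is precisely the Fierz identity \eqref{Jacobi_E6} applied with $(\psi_1,\psi_2,\psi_3)=(\xi,\psi,\psi)$: the cyclic sum collapses to $e^i\xi(\psi e^i\psi)+2e^i\psi(\xi e^i\psi)=0$, and pairing with $\psi^T C$ on the left produces $3(\xi e^i\psi)(\psi e^i\psi)=0$.

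For the $\diamond$ formula, I write $d(\Psi_1,\Psi_2,\Psi_3)=\langle\Psi_1\diamond\Psi_2,\Psi_3\rangle$ for the natural pairing on ${\bf 27}\otimes{\bf 27}^*$ and collect coefficients of the three components $(v_3,\psi_3,s_3)$ of $\Psi_3$. The $s_3$-coefficient reads off immediately as $v_1\cdotp v_2$; the $v_3$-coefficient, obtained by extracting $v_3^i$ from $(\psi_1 v_3\psi_2)=v_3^i(\psi_1 e^i\psi_2)$, becomes $v_1 s_2+v_2 s_1-\tfrac{1}{\sqrt 2}(\psi_1 e^i\psi_2)e^i$; and the $\psi_3$-coefficient is $-\tfrac{1}{\sqrt 2}(v_1\psi_2+v_2\psi_1)$, using $(\psi_j v_i\psi_3)=(v_i\psi_j,\psi_3)$ which follows from $v^T=CvC^{-1}$.
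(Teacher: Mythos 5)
Your proposal is correct and follows essentially the same route as the paper: invariance under $Spin(10)\times U(1)$ is manifest (by weights), and invariance under the odd generators $\xi,\eta$ reduces, after trivial cancellations, to exactly the Fierz identity \eqref{Jacobi_E6}, with the $\diamond$ formula obtained by reading off the coefficients of $(v_3,\psi_3,s_3)$. The only cosmetic differences are that you polarize to the diagonal $d(X\circ\Psi,\Psi,\Psi)=0$ and explicitly verify the transposition symmetry, whereas the paper works with the full trilinear expression and takes the cyclic sum directly.
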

\begin{proof}
 The $d$ tensor is manifestly invariant under $Spin(10)\times U(1)$, we demonstrate the proof that $d$ is also invariant under the action of $\xi$ or $\eta$.
 \bea d(\xi\circ\Psi_1,\Psi_2,\Psi_3)=
 \frac{1}{\sqrt2}((\xi v_3\psi_1)s_2+(\xi v_2\psi_1)s_3)-\frac{s_1}{\sqrt2}((\xi v_3\psi_2)+(\xi v_2\psi_3))-\frac{1}{\sqrt2}(\psi_2 e^i \psi_3)\frac{1}{\sqrt2}(\xi e^i\psi_1).
 \nn\eea
 Note the last term vanishes after taking cyclic permutation, thanks to \eqref{Jacobi_E6}, while the remaining terms cancel trivially.
\end{proof}
\begin{corollary}\label{cor_plucker}
 Let $\Psi$ be in the $E_6$-orbit of the lowest weight vector i.e. of $\Psi_0=[0;0;1]$, then
 \bea d(\Psi,\Psi,-)=0.\nn\eea
 The same applies to the highest weight of ${\bf 27}^*$.
\end{corollary}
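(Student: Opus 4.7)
The plan is to verify the identity at the single point $\Psi_0$, then propagate it over the entire orbit by $E_6$-equivariance.

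First I would compute $\Psi_0 \diamond \Psi_0$ directly from the explicit formula in prop.\ref{prop_d_tensor}. With $v_0=0$, $\psi_0=0$, $s_0=1$, the $V^*$-block reads $v_0s_0+v_0s_0-\frac{1}{\sqrt 2}(\psi_0 e^i \psi_0)e^i = 0$, the $\Gd^-$-block reads $-\frac{1}{\sqrt 2}(v_0\psi_0+v_0\psi_0) = 0$, and the scalar block is $v_0 \cdotp v_0 = 0$. Hence $d(\Psi_0,\Psi_0,-)$ vanishes identically as a linear functional on ${\bf 27}$.

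Second, I would use the $E_6$-equivariance of $d$, already verified in prop.\ref{prop_d_tensor} by checking $\FR{e}_6$-invariance on generators and exponentiating. Equivariance means $d(g\Psi_1,g\Psi_2,g\Phi)=d(\Psi_1,\Psi_2,\Phi)$ for all $g\in E_6$. Given $\Psi=g\Psi_0$ in the orbit and an arbitrary test vector $\Phi\in {\bf 27}$, write $\Phi = g\Phi'$ with $\Phi' = g^{-1}\Phi$; then
\[ d(\Psi,\Psi,\Phi)=d(g\Psi_0,g\Psi_0,g\Phi')=d(\Psi_0,\Psi_0,\Phi')=0. \]
As $\Phi$ ranges over all of ${\bf 27}$, this gives $d(\Psi,\Psi,-)=0$ in ${\bf 27}^*$.

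For the dual assertion about the highest weight vector of ${\bf 27}^*$, the cubic invariant on ${\bf 27}^*$ is defined by the same symmetric tensor $d$, and an identical two-step argument (direct verification at $[0;0;1]\in {\bf 27}^*$, followed by $E_6$-equivariance) applies. There is no substantive obstacle here; the only input beyond elementary bookkeeping is the invariance of $d$ already proved in prop.\ref{prop_d_tensor}, and the rest is a one-line orbit argument.
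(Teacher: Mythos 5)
Your proposal is correct and follows essentially the same route as the paper: verify $d(\Psi_0,\Psi_0,-)=0$ from the explicit formula in prop.\ref{prop_d_tensor} and then propagate over the orbit using $E_6$-invariance of $d$. The paper additionally notes a quicker weight argument (the $U(1)$ weight of $\Psi_0\otimes\Psi_0$ is $-8$, which does not occur among the weights $-2,1,4$ of ${\bf 27}^*$), but this is only an alternative to the direct computation you already carried out.
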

\begin{proof}
Since $d$ is an $E_6$ invariant tensor $d(g\Psi_0,g\Psi_0,-)=d(\Psi_0,\Psi_0,g^{-1}-)$. But by looking at the explicit form of the $d$ tensor $d(\Psi_0,\Psi_0,-)=0$. In fact, even more directly, two factors of $\Psi_0$ would have $U(1)$ weight $-4\times2$, yet the vectors in ${\bf 27}^*$ have weights $-2,1,4$, we get the same conclusion.
\end{proof}
Finally, we observe the $U(1)$ subgroup of $E_6$ acts on $\Psi_0$ by a phase, while $Spin(10)$ preserves it. In fact, we have
\begin{lemma}\label{lem_stab_grp_e6}
  The stability group of $\Psi_0\in{\bf 27}$ is precisely $Spin(10)$.
\end{lemma}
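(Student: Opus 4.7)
The plan is to reduce the question to a Lie-algebra computation and then integrate, being careful with the distinction between the complex Lie algebra and its compact real form. Using the explicit block-matrix action in def.\ref{def_27_e6} on $\Psi_0=[0;0;1]$, I would read off
\bea
x\circ\Psi_0=0,\quad \varrho\circ\Psi_0=-4i\Psi_0,\quad \xi\circ\Psi_0=[0;\xi;0],\quad \eta\circ\Psi_0=0,\nn
\eea
for $x\in\FR{so}(10)$, $\xi\in\Gd^+$, $\eta\in\Gd^-$. Taken at face value over $\BB{C}$ this gives the stabilizer subalgebra $\FR{so}(10)_{\BB{C}}\oplus\Gd^-$, which is too big; the point is that $E_6$ in the lemma denotes the compact real form.

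In the compact real form, $\Gd^+$ and $\Gd^-$ are not independent summands but are glued together by the charge conjugation $\eta=C\xi^*$ (this is \eqref{duality_odd}, applicable since $n=5$ is odd in dim $10$, so conjugation flips chirality). A real element therefore has the form $(x,\xi,C\xi^*,r\varrho)$ with $x\in\FR{so}(10,\BB{R})$, $\xi\in\Gd^+$, $r\in\BB{R}$, and its action on $\Psi_0$ collapses to $[0;\xi;-4ir]$. This vanishes iff $\xi=0$ (whence automatically $C\xi^*=0$) and $r=0$, leaving precisely $\FR{so}(10,\BB{R})$ as the real stabilizer Lie subalgebra. Integrating yields the copy of $Spin(10)$ sitting inside $E_6$ identified in def.\ref{def_e6}.

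To upgrade this to the full stabilizer group and rule out extra discrete components, I would pass back to the complex picture: the complex stabilizer is the semidirect product $Spin(10,\BB{C})\ltimes U^-$ where $U^-=\exp(\Gd^-)$ is abelian unipotent ($[\Gd^-,\Gd^-]=0$ because no weight $-6$ piece exists in $\FR{e}_6$ under the $\varrho$-grading). Intersecting with the compact $E_6$ eliminates $U^-$ outright, since a unipotent group, being diffeomorphic to its Lie algebra via $\exp$, contains no nontrivial compact subgroup, and cuts $Spin(10,\BB{C})$ down to its compact form $Spin(10)$, yielding the claimed answer with no extra components. The main obstacle throughout is precisely this interplay between complex and compact forms: a naive computation over $\BB{C}$ produces the spurious $\Gd^-$ summand, so one must either invoke the reality condition $\eta=C\xi^*$ on the Lie-algebra side, or intersect with the compact group on the group side; both routes rely on the weight-grading observation that $\Gd^-$ is an abelian unipotent radical.
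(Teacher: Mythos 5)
Your route is genuinely different from the paper's. The paper never passes through the Lie algebra: it writes $g\in\opn{Stab}(\Psi_0)$ in block form, uses $g^{\dag}=g^{-1}$ and the condition $g\xi g^{-1}\in\FR{e}_6$ to force $g=\opn{diag}(g_{11},g_{22},1)$, and then shows directly that $g_{22}$ lies in the Clifford group with $N(g_{22})=1$ and is even, hence $g_{22}\in Spin(V)$ covering $g_{11}\in SO(V)$. That argument sees every group element at once, so connected components never arise. Your infinitesimal computation is correct (the complex stabilizer algebra is $\FR{so}(10)_{\BB{C}}\oplus\Gd^-$, the reality condition $\eta=C\xi^*$ kills $\Gd^+\oplus\Gd^-$ and $\varrho$, and $[\Gd^-,\Gd^-]=0$ by the $\varrho$-grading), and the intersection-with-the-compact-form step can be made clean via the remark $P\cap G=H$ already recorded in the paper.

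The gap is in the sentence ``the complex stabilizer is the semidirect product $Spin(10,\BB{C})\ltimes U^-$.'' This is asserted, not proved, and it is exactly the group-level content of the lemma; the Lie-algebra computation only identifies the identity component. A priori the stabilizer of the vector is the kernel of the weight-$(-4)$ character on the parabolic $P$, whose Levi factor is $Spin(10,\BB{C})\times_{\BB{Z}_4}\BB{C}^*$: every element $(1,\lambda)$ with $\lambda^4=1$ fixes $\Psi_0$ (it acts by $\lambda^{-4}=1$ on the $\BB{C}_{-4}$ summand) yet does not visibly lie in $Spin(10)$. It does lie there, but only because of the $\BB{Z}_4$ gluing, under which $(1,\lambda_0)\sim(e^1\cdots e^{10},1)$ up to powers (see the end of def.\ref{def_e6}); without this bookkeeping you can only conclude that the stabilizer is $Spin(10)$ extended by a finite group, not ``precisely $Spin(10)$.'' You should either carry out this $\mu_4$ absorption explicitly, or note that your argument presupposes the identification of $\opn{Stab}([\Psi_0])$ with the standard parabolic $P_I$ (the abstract argument sketched in the proof of thm.\ref{thm_EIII}) together with the structure of its Levi quotient. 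The paper's Clifford-group argument supplies all of this in one stroke, which is why it is stated as the ``direct proof.''
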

\begin{proof}
  In block matrix form, we can write $g\in\opn{Stab}(\Psi_0)$ as
  \bea g=
  \begin{bmatrix}
    g_{11} & g_{12} & 0 \\ g_{21} & g_{22} & 0 \\ \cancel{g_{31}} & \cancel{g_{32}} & 1 \\
  \end{bmatrix},\nn\eea
  where $g_{11}\in\hom(V,V)$, $g_{12}\in\hom(\Gd^+,V)$ etc, the notation should be self-explanatory. Since $E_6$ is a subgroup of $SU(27)$, the conjugate $g^{\dag}=g^{-1}$ also preserves $\Psi_0$, giving $g_{31}=g_{32}=0$.

  Conjugating the Lie algebra of $E_6$ by $g$ should stay in $\FR{e}_6$. So we consider $g\xi g^{-1}\in\FR{e}_6$, in particular its 13-component should vanish. Referring to the matrix corresponding to $\xi$ in def.\ref{def_27_e6}, we get $g_{12}=0$. In the same way, $g_{21}=0$. Thus $g$ is block-diagonal.

  Now notice that in the pairing ${\bf 27}^*\otimes {\bf 27}\to\BB{C}$, the $V$ component is just the (complexified) Euclidean inner-product on $V$, and $g_{11}$ must preserve it, leading to $g_{11}\in O(V)$.

  We know that $g_{22}$ is an invertible map $\hom(\Gd^+,\Gd^+)$.
  We consider again the action of $g\xi g^{-1}$ on the various components
  \bea g\xi g^{-1}\begin{bmatrix}
    0 \\ \psi \\ 0 \end{bmatrix}=
    \begin{bmatrix}
      \frac{1}{\sqrt2}(\xi e^i g_{22}^{-1}\psi) g_{11}e^i \\ 0 \\ 0
 \end{bmatrix},~~~(g\xi g^{-1})\begin{bmatrix}
    0 \\ 0 \\ 1 \end{bmatrix}=\begin{bmatrix}
    0 \\ g_{22}\xi \\ 0 \end{bmatrix}.\nn\eea
If $g\xi g^{-1}$ were to belong to $\FR{e}_6$, then we have
\bea (\xi e^i g_{22}^{-1}\psi) g_{11}e^i=(\xi g_{22}^te^i\psi) e^i~~\To~~
g_{22}^te^kg_{22}^{-1}=g_{11}e^k,\nn\eea
where $t$ is the automorphism \eqref{t_ope}, and we have also used \eqref{t_T}: $C^{-1}g_{22}^TC=g_{22}^t$. This says the twisted adjoint action of $g_{22}$ covers the action of $g_{11}\in O(V)$ (c.f. \eqref{twisted_adj} and remember $\ga$ acts trivially on $g_{22}\in Cl_0$), i.e. $g_{22}\in\Gc(V)$. Next we establish that $N(g_{22})=1$. We use the fact that $g_{22}\in U(\Gd^+)$, which gives $1=g_{22}g_{22}^{\dag}=g_{22}g_{22}^t=N(g_{22})$. We get therefore $g_{22}\in Pin(V)$. But as $g_{22}$ is even, we get finally that $g_{22}\in Spin(V)$ covering $g_{11}\in SO(V)$.
\end{proof}

Before leaving this section, we record a useful lemma for the $\diamond$-product
\begin{lemma}\label{lem_diamond_E6}
  The $\diamond$-product satisfies some Jacobi-like identities: for $\Psi_i\in{\bf 27}$
  \bea (\Psi_1\diamond\Psi_2)\diamond(\Psi_3\diamond\Psi_4)-\Psi_3d(\Psi_1,\Psi_2,\Psi_4)+cyc(1,2,3)=\Psi_4 d(\Psi_1,\Psi_2,\Psi_3).\nn\eea
  For $\Psi_i\in{\bf 27},~\Phi\in{\bf 27}^*$
  \bea \Psi_1\diamond((\Psi_2\diamond\Psi_3)\diamond\Phi)-\bra\Phi,\Psi_3\ket \Psi_1\diamond\Psi_2+cyc(1,2,3)=\Phi d(\Psi_1,\Psi_2,\Psi_3).\nn\eea
\end{lemma}
\begin{proof}
  The proof uses the formula in prop.\ref{prop_d_tensor} and the Fierz identity \eqref{Jacobi_E6}. The calculation is lengthy but unilluminating and so not inflicted upon the reader.
\end{proof}

\subsection{$F_4$}\label{sec_F4}
We follow chapter 8 of \cite{Adams1996lectures}.
In defining $\FR{g}_2$ and $\FR{e}_8$, we used the Clifford algebra of $\BB{R}^8$ and $\BB{R}^{16}\simeq \BB{R}^8\oplus\BB{R}^8$ respectively. Now we identify the $\BB{R}^8$ used for $\FR{g}_2$ as the second $\BB{R}^8$ factor in $\BB{R}^{16}$. Thus, there is a natural embedding $\FR{g}_2\subset\FR{e}_8$.
\begin{definition}
 The $\FR{f}_4$ algebra is the sub-Lie-algebra of $\FR{e}_8$ that centralises $\FR{g}_2$. It is in fact a subalgebra of $\FR{e}_6$.
\end{definition}
\begin{proof}
 Since $\FR{g}_2$ contains the subalgebra $\FR{su}(3)$ that acts on the last six coordinates of $\BB{R}^{16}$. Thus the centraliser of $\FR{g}_2$ must centralise $\FR{su}(3)$, and so must land in $\FR{e}_6$, based on def.\ref{def_e6}.
\end{proof}
Note that so far the description of $\FR{f}_4$ does not depend on how one picks the representation of the Clifford algebra. Next, we want to make the construction more explicit, so we fix our choice of Clifford representation as in Ex.\ref{ex_Gd_and_gamma}.
\begin{proposition}
  In terms of the Lie algebra of $\FR{e}_6$: $(x,\xi,\eta,\varrho)\in \FR{so}(10)\oplus \Gd^+\oplus\Gd^-\oplus \FR{u}(1)$, the $\FR{f}_4$ Lie algebra is described as
  \bea \FR{f}_4=\{(x,\xi,\eta,0)\in\FR{e}_6|x\in\FR{so}(9),~e^{10}\xi=i\eta\}\nn\eea
  and is of dimension $36+16=52$.
\end{proposition}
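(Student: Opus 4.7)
My plan is to verify the proposition by exhibiting an $\FR{f}_4$-invariant vector in $\mathbf{27}$ and characterising $\FR{f}_4$ as its stabiliser inside $\FR{e}_6$. Let $S := \{(x, \xi, \eta, 0) \in \FR{e}_6 : x \in \FR{so}(9),\; e^{10}\xi = i\eta\}$. The constraint $\eta = -ie^{10}\xi$ determines $\eta$ from $\xi \in \Gd^+_{10}$, giving $\dim S = \dim\FR{so}(9) + \dim\Gd^+_{10} = 36+16=52$, matching $\dim\FR{f}_4$. Applying the $\FR{e}_6$-action on $\mathbf{27}$ from def.\ \ref{def_27_e6} to $\Psi_\emptyset := \tfrac{\sqrt 2}{\sqrt 3}[ie^{10};\,0;\,1/\sqrt 2]$ gives
\[
X\cdot \Psi_\emptyset \;=\; \tfrac{\sqrt 2}{\sqrt 3}\,\Bigl[\,ixe^{10}-2re^{10};\;\tfrac{1}{\sqrt 2}(\xi-ie^{10}\eta);\;-2\sqrt 2\,ir\,\Bigr],
\]
which vanishes precisely under $r=0$, $xe^{10}=0$ (i.e.\ $x\in\FR{so}(9)$), and $\xi=ie^{10}\eta$ (equivalently $e^{10}\xi=i\eta$). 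Hence $S$ is exactly the stabiliser in $\FR{e}_6$ of the line through $\Psi_\emptyset$.

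Next I would verify that $S$ is closed under the $\FR{e}_6$-bracket of def.\ \ref{def_e6_bracket}. For $X_i=(x_i,\xi_i,-ie^{10}\xi_i,0)\in S$, the $\FR{u}(1)$-contribution $\tfrac{i}{4}[(\xi_1\eta_2)-(\xi_2\eta_1)]$ reduces under $\eta_k=-ie^{10}\xi_k$ to a multiple of $(\xi_1 e^{10}\xi_2)-(\xi_2 e^{10}\xi_1)$, which vanishes because the bilinear $(\xi e^i\xi')$ lies in the symmetric summand $\wedge^1 V^*$ of the decomposition $\Gd^+\otimes\Gd^+$ recorded in sec.\ \ref{sec_CA}. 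In the $\FR{so}(10)$-piece $-\tfrac14[(\xi_1 e^{ij}\eta_2)-(\xi_2 e^{ij}\eta_1)]x^{ij}$, the $x^{10,j}$ coefficients collapse via the Clifford identity $e^{10,j}e^{10}=-e^j$ to the same symmetric $\wedge^1$ pairing and vanish, while the $x^{ij}$ with $i,j<10$ stay in $\FR{so}(9)$. Finally, the new spinor components $\xi'=x_1\xi_2-x_2\xi_1$ and $\eta'=x_1\eta_2-x_2\eta_1$ satisfy $e^{10}\xi'=i\eta'$ because $\FR{so}(9)$ commutes with $e^{10}$ in the Clifford algebra. Thus $S$ is a $52$-dimensional Lie subalgebra of $\FR{e}_6$.

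To conclude $S=\FR{f}_4$ on the nose (not merely up to isomorphism), I would verify $S\subset Z_{\FR{e}_8}(\FR{g}_2)$ directly, using that $\FR{e}_6$ already centralises $\FR{su}(3)\subset\FR{g}_2$; dimension matching then closes the argument. Concretely, I would check $[X,L_a]=0$ for $X\in S$ and the six complex generators $L_a=K_a+iT_a$ of $\FR{g}_2/\FR{su}(3)$ listed in \eqref{g2/su3}, where $K_a\in\FR{so}(6)$ is the pure imaginary-octonion piece and $T_a=-\tfrac12 e^{10}(e^{a+5}-ie^{a+6})$ mixes $e^{10}$ with $\BB{R}^6$. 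The brackets $[x,K_a]=0$ are immediate by disjoint indices, while $[x,T_a]=0$ reduces to $xe^{10}=0$, true for $x\in\FR{so}(9)$. The crux is the spinor sector: the brackets of $(\xi,-ie^{10}\xi)$ against $K_a$ and $T_a$ must cancel each other, and this is where $e^{10}\xi=i\eta$ enters, acting as a bridge between the opposite chiralities mixed by $T_a$. These cancellations follow from the Clifford relations together with the Fierz identity \eqref{Jacobi_E6}, and working them through is the main technical challenge.
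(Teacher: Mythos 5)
Your preliminary steps are sound: the computation of $X\cdot\Psi_\emptyset$, the closure of $S$ under the $\FR{e}_6$-bracket (you correctly use that $(\xi e^i\xi')$ is symmetric and $(\xi e^{ijk}\xi')$ antisymmetric in $\dim 10$), and the dimension count all check out. But the proof has a genuine gap exactly where you flag ``the main technical challenge'': the identification of $S$ with $\FR{f}_4$. In this paper $\FR{f}_4$ is \emph{defined} as the centraliser of $\FR{g}_2$ in $\FR{e}_8$, so knowing that $S$ is a $52$-dimensional subalgebra stabilising $\Psi_\emptyset$ proves nothing about $\FR{f}_4$ until the centraliser condition is actually computed. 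That computation --- checking that a spinor pair $(\xi,\eta)$ commutes with the generators \eqref{g2/su3} if and only if $e^{10}\xi=i\eta$ --- is precisely the entire content of the paper's proof: the paper picks one generator, splits it into the piece lying in $\FR{so}(6)$ and the piece containing $e^{10}$, acts on $\xi\otimes|{+}{+}{+}\rangle+\eta\otimes|{-}{-}{-}\rangle$ in the explicit Clifford representation, finds that the two contributions land in the same component $|{+}{-}{+}\rangle$, and reads off that they cancel iff $\eta=-ie^{10}\xi$. Deferring this means the proposal does not yet prove the proposition.

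Moreover, your fallback ``verify $S\subset Z_{\FR{e}_8}(\FR{g}_2)$ and close by dimension matching'' is circular within the paper's framework: the equality $\dim\FR{f}_4=52$ is part of the statement being proved, not an available input, since $\FR{f}_4$ here is nothing but a name for the centraliser. A one-sided inclusion $S\subseteq Z_{\FR{e}_8}(\FR{g}_2)$ together with $\dim S=52$ only yields $\dim\FR{f}_4\ge 52$. You must also establish the reverse inclusion, i.e.\ that centralising \eqref{g2/su3} \emph{forces} the $\FR{u}(1)$-component to vanish, $x\in\FR{so}(9)$, and $e^{10}\xi=i\eta$; the paper's explicit computation delivers necessity and sufficiency simultaneously, which is why it cannot be skipped. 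Your stabiliser-of-$\Psi_\emptyset$ observation is a nice consistency check --- the paper derives the $F_4$-invariance of $\Psi_\emptyset$ as a consequence of the proposition --- but it cannot substitute for the centraliser computation without independently knowing that $\FR{f}_4$ fixes $\Psi_\emptyset$ and is no smaller than its stabiliser.
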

\begin{proof}
 The generators of $\FR{g}_2$ not in $\FR{su}(3)$ are listed in \eqref{g2/su3}. One sees
 immediately that $\FR{so}(9)\subset\FR{so}(10)$ (acting on the first nine coordinates) centralises $\FR{g}_2$, but $\FR{u}(1)$ does not. As for the $\xi,\eta$ generators in def.\ref{def_e6_bracket}, they will not centralise $\FR{g}_2$ except when they satisfy
 \bea e^{10}\xi=i\eta,~~e^{10}\eta=-i\xi.\nn\eea
 To see this, we keep in mind that $\xi$ has a factor of $|+\!++\ket$ hanging implicitly, while $\eta$ has $|-\!--\ket$. Let us pick one generator from \eqref{g2/su3} and check if it kills $\xi+\eta$, e.g.
 \bea 0\stackrel{?}{=}(\frac14(e^{15}+ie^{16})(e^{11}+ie^{12})-\frac{i}2 e^{10}(e^{13}-ie^{14}))(\xi\otimes|+\!++\ket+\eta\otimes|-\!--\ket),\nn\eea
 where we have renamed some indices in keeping with our embedding of the $\BB{R}^8$ factor.
 Now we need to use the explicit choice of the representation for $e^i$ to do the calculation. We have, after some direct calculation
 \bea &&\frac14(e^{15}+ie^{16})(e^{11}+ie^{12})(\xi\otimes|+\!++\ket+\eta\otimes|-\!--\ket)
 =-\eta\otimes|+\!-+\ket,\nn\\
 &&-\frac{i}2 e^{10}(e^{13}-ie^{14})(\xi\otimes|+\!++\ket+\eta\otimes|-\!--\ket)
 =-ie^{10}\xi\otimes |+\!-+\ket.\nn\eea
 For the sum to cancel, we need $\eta=-ie_{10}\xi$. We need to do the same check for all generators of \eqref{g2/su3} and their complex conjugates.
\end{proof}

Now we can come to the representation. Consider the vector $\Psi_{\emptyset}\in{\bf 27}$ of $\FR{e}_6$
\bea \Psi_{\emptyset}=\frac{\sqrt2}{\sqrt3}\begin{bmatrix}
                                   ie_{10} \\ 0 \\ \frac{1}{\sqrt2}
                                  \end{bmatrix}\label{Psi_0}\eea
it is clearly killed by $\FR{so}(9)$. By using the explicit action of $\FR{e}_6$ in ${\bf 27}$ in def.\ref{def_27_e6}, we can check that $\Psi_{\emptyset}$ is also annihilated by $\xi-ie^{10}\xi$, i.e. $\Psi_{\emptyset}$ is invariant under $F_4$. So we have the split ${\bf 27}\simeq {\bf 26}\oplus{\bf 1}$.
Explicitly, a vector in ${\bf 26}$ is described as
\bea \Psi=\begin{bmatrix}
           u+\frac{s}{\sqrt2}e_{10} \\ \psi \\ is
          \end{bmatrix}\label{Psi_26}\eea
where $u\in\BB{R}^9$, while the $10^{th}$ and $27^{th}$ components are correlated so that $\Psi\perp\Psi_{\emptyset}$.

Similarly, if we take ${\bf 27}^*$ of $\FR{e}_6$, we have the decomposition ${\bf 27}^*={\bf 26}^*\oplus{\bf 1}^*$.
Explicitly
\bea
\Phi=
\begin{bmatrix}
  u+\frac{s}{\sqrt2}e_{10} \\ \phi \\ -is \\
\end{bmatrix},~~~\Phi_{\emptyset}=\frac{\sqrt2}{\sqrt3}\begin{bmatrix}
             -ie_{10} \\ 0 \\ \frac{1}{\sqrt2}
            \end{bmatrix}.\nn\eea
Even though ${\bf 27}\not\simeq{\bf 27}^*$ for $\FR{e}_6$, we have
\begin{lemma}\label{lem_26_real}
The ${\bf 26}$ and ${\bf 26}^*$ representations of $F_4$ are isomorphic, with the explicit isomorphism
\bea {\bf 26}^*\ni \Phi\stackrel{\gs}{\to} \gs\Phi=\begin{bmatrix}
  u-\frac{1}{\sqrt2}se^{10} \\ ie^{10}\phi \\ -is
\end{bmatrix}\in{\bf 26}.\nn\eea
The same $\gs$ also gives ${\bf 1}\simeq {\bf 1}^*$.
\end{lemma}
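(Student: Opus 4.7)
The plan is to exhibit $\sigma$ (up to an overall scalar) as the $F_4$-equivariant map
\[
\Phi \;\longmapsto\; \Phi_{\emptyset} \diamond \Phi,
\]
where $\diamond$ denotes the $E_6$-equivariant projection ${\bf 27}^* \otimes^s {\bf 27}^* \to {\bf 27}$ induced by the cubic invariant on ${\bf 27}^*$ (the exact analogue of prop.\ref{prop_d_tensor} on the dual representation). Since $\Phi_{\emptyset} \in {\bf 1}^*$ is $F_4$-invariant by construction and $\diamond$ is $E_6$-equivariant, contracting one slot against $\Phi_{\emptyset}$ automatically yields an $F_4$-equivariant linear map ${\bf 27}^* \to {\bf 27}$. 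Thus $F_4$-equivariance of $\sigma$ comes for free, and the only remaining task is a formula check.

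First I would expand $\Phi_{\emptyset} \diamond \Phi$ explicitly for $\Phi = [u + (s/\sqrt2)e^{10};\,\phi;\,-is]$ using the cubic-invariant formula on ${\bf 27}^*$. The $\BB{C}$-component reduces to $u_{\Phi_{\emptyset}} \cdot u_{\Phi}$; substituting the $V^*$-part of $\Phi_{\emptyset}$, namely $(\sqrt2/\sqrt3)(-ie^{10})$, together with $(u_{\Phi})_{10} = s/\sqrt2$, produces $-is/\sqrt3$. The spinor component $-\frac{1}{\sqrt2}u_{\Phi_{\emptyset}}\phi$ becomes $(i/\sqrt3)e^{10}\phi$; crucially, $e^{10}$ is an odd Clifford element and so flips chirality $\Gd^- \to \Gd^+$, giving precisely $(1/\sqrt3)(ie^{10}\phi)$, which lives in the correct summand of ${\bf 27}$. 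For the $V$-component, combining the two terms $u_1 t_2 + u_2 t_1$ one finds that the contribution proportional to $e^{10}$ picks up a net sign flip, turning $+(s/\sqrt2)e^{10}$ into $-(s/\sqrt2)e^{10}$ while $u$ itself is untouched. After pulling out the overall factor $1/\sqrt3$, this matches the stated $\sigma(\Phi)$ exactly, simultaneously verifying the formula and proving $F_4$-equivariance.

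It remains to address the irreducible splitting. A direct evaluation gives $\Phi_{\emptyset} \diamond \Phi_{\emptyset}$ as a nonzero scalar multiple of $\Psi_{\emptyset}$ (schematically $-(2/\sqrt3)\Psi_{\emptyset}$), so $\sigma$ restricts to an isomorphism ${\bf 1}^* \to {\bf 1}$. Since ${\bf 1}$ and ${\bf 26}$ are non-isomorphic irreducible $F_4$-representations, Schur's lemma then forces $\sigma({\bf 26}^*) \subset {\bf 26}$; bijectivity on each summand is immediate from the explicit form, because $\phi \mapsto ie^{10}\phi$ is involutive up to sign (inverse $-ie^{10}(\cdot)$) and the other components are unit rescalings. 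The main obstacle, if there is one, is purely bookkeeping: the two linked occurrences of the scalar $s$ in $\Phi$---one inside the $V^*$-component and one in the scalar component---must be tracked consistently when unwinding $\Phi_{\emptyset} \diamond \Phi$, but no Fierz identities or spinor gymnastics are required.
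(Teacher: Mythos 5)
Your proposal is correct, and it takes a genuinely different route from the paper. The paper proves equivariance by brute force: it checks that $\gs$ commutes with the generators of $\FR{f}_4$, which is clear for $\FR{so}(9)$ and requires an explicit spinor computation for the generators $\xi+\eta$ with $C\xi^*=\eta$, using the module structure of def.\ref{def_27_e6}. You instead obtain equivariance for free from the $E_6$-invariance of the cubic form together with the $F_4$-invariance of $\Phi_{\emptyset}$, reducing everything to the purely mechanical expansion of $\Phi_{\emptyset}\diamond\Phi$; I have checked that this expansion does give $\frac{1}{\sqrt3}\gs\Phi$ on the ${\bf 26}^*$ parametrisation and $-\frac{2}{\sqrt3}\Psi_{\emptyset}=-\frac{2}{\sqrt3}\gs\Phi_{\emptyset}$ on ${\bf 1}^*$, so your Schur argument closes the proof. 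Two remarks. First, your phrase ``up to an overall scalar'' is slightly imprecise: the proportionality constant between $\gs$ and $\Phi_{\emptyset}\diamond(-)$ differs on the two irreducible summands ($\sqrt3$ on ${\bf 26}^*$ versus $-\sqrt3/2$ on ${\bf 1}^*$), which is harmless for the conclusion but should be stated as ``a scalar on each summand.'' Second, your argument consumes two inputs the paper asserts but does not fully write out: the explicit component formula for the cubic invariant on ${\bf 27}^*$ (the mirror of prop.\ref{prop_d_tensor}, with $V\leftrightarrow V^*$ and $\Gd^+\leftrightarrow\Gd^-$), and the $F_4$-invariance of $\Phi_{\emptyset}$ (the paper verifies this only for $\Psi_{\emptyset}$). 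Both are routine, but a self-contained version of your proof would verify them, at which point the total labour is comparable to the paper's direct check; what your approach buys is conceptual clarity and the avoidance of any Fierz or chirality gymnastics in the equivariance step.
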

\begin{proof}
The proof consists of showing that the map above is a map of representations. It is quite clear that the map is compatible with the action of $\FR{so}(9)$. So we need only check the action of $\xi+\eta$. We assume that $C\xi^*=\eta$ so that the Lie algebra generator $\xi+\eta$ be real. As a consequence $e^{10}C\xi^*=-i\xi$ and $e^{10}C\eta^*=i\eta$, and we check the action
\bea (\xi+\eta)\circ\Phi&=&\begin{bmatrix}
                          \frac{1}{\sqrt2}(\eta e^i\phi)e^i+\frac{1}{\sqrt2}(\eta e^{10}\phi)e^{10} \\
                          -\frac{3is}{2}\eta-\frac{1}{\sqrt2}u\xi \\
                          -(\xi\phi)
                         \end{bmatrix}=\begin{bmatrix}
                          \frac{i}{\sqrt2}(\xi e^ie^{10}\phi)e^i+\frac{i}{\sqrt2}(\eta e^{10}\phi)e^{10} \\
                          -\frac{3is}{2}\eta-\frac{1}{\sqrt2}u\xi \\
                          -i(\eta e^{10}\phi)
                         \end{bmatrix}\nn\\
                         &\stackrel{\gs}{\to}&\begin{bmatrix}
                          \frac{i}{\sqrt2}(\xi e^ie^{10}\phi)e^i-\frac{i}{\sqrt2}(\eta e^{10}\phi)e^{10} \\
                          -\frac{3is}{2}\xi-\frac{1}{\sqrt2}u\eta \\
                          -i(\eta e^{10}\phi)
                         \end{bmatrix}
=(\xi+\eta)\circ \begin{bmatrix}
           u -\frac{s}{\sqrt2}e_{10} \\ ie^{10}\phi \\ -is \end{bmatrix}=(\xi+\eta)\circ(\gs\Phi),\nn\eea
           where the $i$ index only goes from 1 to 9.
\end{proof}

\section{Jordan Algebra and the Cubic Invariant}
\subsection{The $\diamond$ Product}
We saw earlier that the dim 27 representation of $E_6$, there is a totally symmetric cubic form $d(\textrm{-},\textrm{-},\textrm{-})$, which can be regarded as a map ${\bf 27}\otimes^s{\bf 27}\to {\bf 27}^*$.
But as representation of $F_4$, they are isomorphic via the map $\gs$ in lem.\ref{lem_26_real}. As a result
\begin{definition}\label{def_diamond_map}
  We have a symmetric $F_4$-invariant pairing $\bra\Psi_1,\Psi_2\ket$, defined by sending $\Psi_2$ to $\gs^{-1}\Psi_2\in{\bf 27}^*$, where $\gs$ is the map in lem.\ref{lem_26_real}, followed by a natural pairing with $\Psi_1$.
  The concrete formula is
  \bea \bra[u_1+\frac{i}{\sqrt2}t_1e^{10};\xi_1;\eta_1;s_1],[u_2+\frac{i}{\sqrt2}t_2e^{10};\xi_2;\eta_2;s_2]\ket
  =u_1\cdotp u_2+s_1s_2+\frac12t_1t_2+(\xi_1\xi_2)+(\eta_1\eta_2).\nn\eea
  We have a bi-linear map of representations of $F_4$
  \bea {\bf 27}\otimes^s{\bf 27}\to {\bf 27},\nn\eea
  denoted as $\Psi_1\diamond\Psi_2$:
  \bea \Psi_1\diamond\Psi_2=\gs d(\Psi_1,\Psi_2,\textrm{-}).\nn\eea
The concrete formula for $\diamond$ is given in \eqref{diamond_product}.
\end{definition}
We will relate next the diamond product to a certain exceptional Jordan product.

\subsection{Exceptional Jordan algebra}\label{sec_EJA}
Consider $3\times 3$ Hermitian Octonion matrices
\bea J=
\begin{bmatrix}
  a & q_1 & q_3 \\
  \bar q_1 & b & q_2 \\
  \bar q_3 & \bar q_2 & c \\
\end{bmatrix}\nn\eea
where the bar denotes the operation in def.\ref{def_bar}. For real octonions, this is the conjugation, but for $\BB{C}\otimes\BB{O}$, this conjugation does not touch the $\BB{C}$ factor.
\begin{definition}\label{def_Jordan_prod}
The Jordan product is defined as the anti-commutator
\bea J_1\star J_2:=\frac12\{J_1,J_2\}.\label{jordan_prod}\eea
This is the unique exceptional Euclidean Jordan algebra, known as the Albert algebra, according to the classification of Jordan, von Neumann and Wigner \cite{JordanNeumannWigner}.
\end{definition}

We now prove thm.\ref{thm_star_diamond}, which shows that the Jordan product above is the diamond product, induced from the invariant tensor $d$.
\begin{proof}(Of thm.\ref{thm_star_diamond})
We recall the notation, a vector $\Psi\in{\bf 26}\oplus{\bf 1}$ is denoted as
\bea\Psi=
  \begin{bmatrix}
    u\oplus \frac{i}{\sqrt2}te^{10} \\ \psi \\ s
  \end{bmatrix},\nn\eea
where $u\in\BB{R}^9$ and $\Psi\in \Gd^+_{10}$.
We say $\Psi$ is real if $u,t,s$ are real and $\psi=ie^{10}C_{10}\psi^*$. There is an inner-product
\bea \bra\Psi_1,\Psi_2\ket&=&u_1\cdotp u_2+\frac12t_1t_2-i(\psi_1 e^{10}\psi_2)+s_1s_2\nn\\
&=&{\tt u}_1\cdotp{\tt u}_2+\frac12(r_1r_2+t_1t_2)+(\eta_1\eta_2)+(\xi_1\xi_2)+s_1s_2,\label{inner_prd_26}\eea
where we have further split $u$, and $\psi$ in terms of 8 dimension quantities
\bea u={\tt u}\oplus \frac{r}{\sqrt2}e^9;~~~~~~
\psi=
\begin{bmatrix}
  \xi \\ \eta
\end{bmatrix},~~~\xi\in\Gd_8^+,~~\eta\in\Gd^-_8\nn\eea
When $\Psi$ is real, then $\xi,\eta$ are Majorana, i.e. $\xi=C_8\xi^*$ and $\eta=C_8\eta^*$.

We arrange $\Psi$ into a matrix as in \eqref{arrangement}
\bea J(\Psi)=\begin{bmatrix}
  \frac{1}{2}(r-s) & \frac{1}{\sqrt2}\xi & \frac{1}{\sqrt2}{\tt u} \\
  \frac{1}{\sqrt2}\bar\xi & \frac12(s-t) & \frac{1}{\sqrt2}\eta \\
  \frac{1}{\sqrt2}\bar {\tt u} & \frac{1}{\sqrt2}\bar\eta & -\frac{1}{2}(r+s) \\
\end{bmatrix}.\nn\eea
\begin{remark}
We remind the reader that for $\xi,\eta$, we implicitly use the maps \eqref{map_s}, \eqref{map_t} to identify them with octonions. For example $\xi,\eta$ should really read $t^{-1}(\xi),\,s^{-1}(\eta)\in\BB{R}^8$. We do not make these maps explicit, due to our unfortunate choice of notation.

The bars are placed in the matrix precisely so that when multiplying two matrices, the resulting octonion multiplication is naturally realised as operations in the Clifford algebra. For example, from prop.\ref{prop_oct_mult_acatar}, we have $J_{21}J_{13}=t^{-1}(\bar\xi)\star {\tt u}=s^{-1}({\tt u}\xi)$, i.e. $\star$ is simply the Clifford action. Similarly $J_{12}J_{21}=t^{-1}(\xi)\star t^{-1}(\bar\xi)=(\xi\xi)$, i.e. the diagonal entries of the product will be given by the natural pairings of spinors.
\end{remark}

First, since $\Psi$ can be projected to the ${\bf 1}$ summand
\bea \Psi \to \bra\Psi,\Psi_{\emptyset}\ket=\frac{s+t}{\sqrt3},\nn\eea
while the naive trace of $J(\Psi)$ gives $\Tr[J(\Psi)]=-(s+t)/2$, so we conclude that the naive trace
\bea \Tr[J(\Psi)]=-\frac{\sqrt3}{2}\bra\Psi,\Psi_{\emptyset}\ket\label{trace}\eea
and so is $F_4$ invariant.

Next we want to show that
  \bea J_1\star J_2=-\frac12J(\Psi_1\diamond\Psi_2)+\frac14\bra \Psi_1,\Psi_2\ket {\bf 1}_3.\label{master_eq_apx}\eea
  It is actually a direct computation. For example for 13 entry
  \bea 2(J_1\star J_2)_{13}&=&-\frac{1}{\sqrt2}s_1{\tt u}_2+\frac12\xi_1\star \eta_2+(1\leftrightarrow2)
  =-\frac{1}{\sqrt2}s_1{\tt u}_2+\frac12(\xi_1{\tt e}^i\eta_2){\tt e}^i+(1\leftrightarrow2)\nn\\
  \sqrt2J(\Psi_1\diamond\Psi_2)_{13}&=&({\tt u}_1s_2+{\tt u}_2s_1)-\frac1{\sqrt2}(\psi_1{\tt e}^i\psi_2){\tt e}^i
  =({\tt u}_1s_2+{\tt u}_2s_1)-\frac1{\sqrt2}(\xi_1{\tt e}^i\eta_2){\tt e}^i-\frac1{\sqrt2}(\xi_2{\tt e}^i\eta_1){\tt e}^i.\nn\eea
  We see that the 13 entry of \eqref{master_eq_apx} match.
  Note that to evaluate $(\psi_1e^i\psi_2)$ based on our explicit choice of Clifford algebra representation, we have
  \bea (\psi_1e^i\psi_2)=\left\{\begin{array}{cc}
                                    (\xi_1 {\tt e}^i\eta_2)+(\xi_2 {\tt e}^i\eta_1) & i=1,\cdots,8 \\
                                    -(\eta_1 \eta_2)+(\xi_1 \xi_2) & i=9 \\
                                    i(\eta_1 \eta_2)+i(\xi_1 \xi_2) & i=10 \\
                                  \end{array}\right.\nn\eea
  Checking the 12 entry (here $J_{1,2}:=J(\Psi_{1,2})$)
  \bea 2(J_1\star J_2)_{12}&=&\frac{1}{2\sqrt2}(r_1-t_1)\xi_2+\frac12{\tt u}_1\star \bar\eta_2+(1\leftrightarrow2)
  =\frac{1}{2\sqrt2}(r_1-t_1)\xi_2+\frac12{\tt u}_1\eta_2+(1\leftrightarrow2)\nn\\
  \sqrt2J(\Psi_1\diamond\Psi_2)_{12}&=&-\frac{1}{\sqrt2}{\tt u}_1\eta_2+\frac{1}{2}(t_1-r_1)\xi_2+(1\leftrightarrow2)  .\nn\eea
  It matches again. Let us check also the 23 entry
  \bea 2(J_1\star J_2)_{23}&=&-\frac{1}{2\sqrt2}(r_1+t_1)\eta_2+\frac12\bar\xi_1\star{\tt u}_2+(1\leftrightarrow2)
  =-\frac{1}{2\sqrt2}(r_1+t_1)\eta_2+\frac12{\tt u}_1\xi_2+(1\leftrightarrow2)\nn\\
  \sqrt2J(\Psi_1\diamond\Psi_2)_{23}&=&-\frac{1}{\sqrt2}{\tt u}_1\xi_2+\frac{1}{2}(t_1+r_1)\eta_2+(1\leftrightarrow2).\nn\eea
  Match again.

  The diagonal entries are more complicated, we compute the lhs below
  \bea 4(J_1\star J_2)_{11}&=&\frac12(r_1-s_1)(r_2-s_2)+\xi_1\star\bar\xi_2+{\tt u}_1\star\bar{\tt u}_2+(1\leftrightarrow2)\nn\\
  &=&(r_1-s_1)(r_2-s_2)+2(\xi_1\xi_2)+2{\tt u}_1\cdotp{\tt u}_2,\nn\\
  4(J_1\star J_2)_{22}&=&\frac12(s_1-t_1)(s_2-t_2)+\bar\xi_1\star\xi_2+\eta_1\star\bar\eta_2+(1\leftrightarrow2)\nn\\
  &=&(s_1-t_1)(s_2-t_2)+2(\eta_1\eta_2)+2(\xi_1\xi_2),\nn\\
  4(J_1\star J_2)_{33}&=&\frac12(r_1+s_1)(r_2+s_2)+\bar\eta_1\star\eta_2+\bar {\tt u}_1\star {\tt u}_2+(1\leftrightarrow2)\nn\\
  &=&(r_1+s_1)(r_2+s_2)+2(\eta_1\eta_2)+2{\tt u_1}\cdotp{\tt u_2}.\nn\eea
  On the other hand
  \bea
  2J(\Psi_1\diamond\Psi_2)_{11}&=&r_{(1}s_{2)}-(\psi_1 e^9\psi_2)-{\tt u}_1\cdotp{\tt u}_2-\frac12r_1r_2+\frac12 t_1t_2\nn\\
  &=&r_{(1}s_{2)}-(\xi_1\xi_2)+(\eta_1\eta_2)-{\tt u}_1\cdotp{\tt u}_2-\frac12r_1r_2+\frac12 t_1t_2,\nn\\
  2J(\Psi_1\diamond\Psi_2)_{22}&=&{\tt u}_1\cdotp{\tt u}_2+\frac12(r_1r_2-t_1t_2)+t_{(1}s_{2)}-(\xi_1\xi_2)-(\eta_1\eta_2),\nn\\
  2J(\Psi_1\diamond\Psi_2)_{33}&=&-{\tt u}_1\cdotp{\tt u}_2-\frac12(r_1r_2-t_1t_2)-r_{(1}s_{2)}+(\xi_1\xi_2)-(\eta_1\eta_2).\nn\eea
  Comparing two sides, we get
  \bea 4(J_1\star J_2)+2J(\Psi_1\diamond \Psi_2)=\frac12(r_1r_2+t_1t_2)+s_1s_2+(\xi_1\xi_2)+(\eta_1\eta_2)+{\tt u}_1\cdotp{\tt u}_2.\nn\eea
  Comparing with \eqref{inner_prd_26}, we get \eqref{master_eq_apx}.
\end{proof}


\newcommand{\etalchar}[1]{$^{#1}$}

\end{document}